\newcommand{\RN}[1]{%
  \textup{\uppercase\expandafter{\romannumeral#1}}%
}
\DeclareMathOperator*{\osc}{\text{osc}}
\theoremstyle{plain}
\newtheorem{theorem}{Theorem}[section]
\newtheorem{lemma}[theorem]{Lemma}
\newtheorem{corollary}[theorem]{Corollary}
\newtheorem{proposition}[theorem]{Proposition}
\theoremstyle{definition}
\theoremstyle{remark}
\newtheorem{remark}[theorem]{Remark}
\providecommand{\keywords}[1]{\textbf{Key words.} #1}
\renewcommand{\div}{\text{div}}
\renewcommand{\epsilon}{\varepsilon}
\newcommand{\R}{\mathbb{R}}
\numberwithin{equation}{section}
\begin{document}
\title[Gradient estimates for parabolic $p$-Laplace type equations]{Gradient estimates for singular parabolic $p$-Laplace type equations with measure data}

\author[H. Dong]{Hongjie Dong}
\address[H. Dong]{Division of Applied Mathematics, Brown University, 182 George Street, Providence, RI 02912, USA}

\email{Hongjie\_Dong@brown.edu}

\thanks{H. Dong was partially supported by a Simons fellowship grant no. 007638, the NSF under agreement DMS-2055244, and the Charles Simonyi Endowment at the Institute of Advanced Study.}

\author[H. Zhu]{Hanye Zhu}
\address[H. Zhu]{Division of Applied Mathematics, Brown University, 182 George Street, Providence, RI 02912, USA}
\thanks{H. Zhu was partially supported by the NSF under agreement DMS-2055244.}

\email{Hanye\_Zhu@brown.edu}

\subjclass[2020]{35K92, 35K67, 35B65, 35R06, 31C45}

\keywords{parabolic $p$-Laplace type equations, gradient estimates, measure data, Dini continuity}

\begin{abstract}
We are concerned with gradient estimates for solutions to a class of singular quasilinear parabolic equations with measure data, whose prototype is given by the parabolic $p$-Laplace equation $u_t-\Delta_p u=\mu$ with $p\in (1,2)$. The case when $p\in \big(2-\frac{1}{n+1},2\big)$ were
studied in \cite{kuusi2013mingione}. In this paper, we extend the results in \cite{kuusi2013mingione} to the open case when $p\in \big(\frac{2n}{n+1},2-\frac{1}{n+1}\big]$ if $n\geq 2$ and $p\in(\frac{5}{4}, \frac{3}{2}]$ if $n=1$. More specifically,  in a more singular range of $p$ as above, we establish
pointwise gradient estimates via linear parabolic Riesz potential and gradient continuity results via certain assumptions on parabolic Riesz potential.
\end{abstract}
\maketitle

\section{Introduction}
In this paper, we consider the quasilinear parabolic equation with measure data
\begin{equation}\label{eq:u}
   u_t -\div(a(x,t,Du))=\mu
\end{equation}
in a cylindrical domain $\Omega_T=\Omega\times(-T,0)\subset \R^n$, where $\Omega\subset\mathbb{R}^n$ is a bounded domain and $T>0$. Here and in what follows, the operators ``$D$'' and ``$\text{div}$'' stand for the gradient and divergence with respect to the space variable $x$. Moreover, $\mu$ is a finite signed Radon measure in $\Omega_T$, namely, $|\mu|(\Omega_T)<\infty$. The vector field $a=(a_1,\ldots,a_n):\Omega_T\times \R^n\rightarrow \R^n$ is assumed to satisfy the following growth, ellipticity, and continuity conditions: there exist constants $0<\nu\leq L$, $s\geq 0$, and $p>1$ such that
\begin{equation}\label{ineq:growth}
    |a(x,t,\xi)|+(s^2+|\xi|^2)^{1/2} |D_\xi a(x,t,\xi)|\leq L(s^2+|\xi|^2)^{(p-1)/2},
\end{equation}
\begin{equation}\label{ineq:elliptic}
    \left\langle D_\xi a(x,t,\xi)\eta,\eta \right\rangle\geq\nu(s^2+|\xi|^2)^{(p-2)/2}|\eta|^2,
\end{equation}
and
\begin{equation}\label{ineq:osi}
    |a(x,t,\xi)-a(x_0,t,\xi)|\leq L\,\omega(|x-x_0|)(s^2+|\xi|^2)^{(p-1)/2}
\end{equation}
hold for every $x,x_0\in\Omega$, $t\in(-T,0)$, and $(\xi,\eta)\in\mathbb{R}^n\times\mathbb{R}^n\backslash\{(0,0)\}$, where $\omega:[0,\infty)\rightarrow[0,1]$ is a
concave non-decreasing function satisfying
$$
\lim_{\rho\rightarrow0^+}\omega(r)=\omega(0)=0
$$
and the Dini condition
\begin{equation}\label{dini}
    \int_0^1\omega(\rho)\,\frac{d\rho}{\rho}<+\infty.
\end{equation}
A typical model equation is given by the (possibly nondegenerate) parabolic $p$-Laplace equation with measure data and $s\geq 0$:
\begin{equation*}
    u_t-\div\left((|D u|^2+s^2)^\frac{p-2}{2}D u\right)=\mu \quad \text{in} \,\, \Omega_T.
\end{equation*}
By a (weak) solution to the equation \eqref{eq:u}, we mean a function
$$
u\in C^0(-T,0;\, L^2(\Omega))\cap L^p(-T,0;\,W^{1,p}(\Omega))
$$
such that
the distributional relation
$$-\int_{\Omega_T} u\varphi_t\,dxdt+\int_{\Omega_T}\langle a(x,t,Du),D\varphi\rangle\, dxdt=\int_{\Omega_T}\varphi \,d\mu$$
holds whenever $\varphi\in C^\infty_0(\Omega_T)$ has compact support in $\Omega_T$.

The gradient estimates for the super-quadratic case when $p\geq 2$ were well studied in the literature. See \cite{MR2823872, kuusi2014the, kuusi2014riesz} and also
\cite{MR2729305, kuusi2014guide, MR2746772, MR3004772} for estimates for elliptic problems. However, the corresponding results for the singular case when $p\in(1,2)$ are still not complete.

In this paper, we are concerned with only the singular case when $p\in (1,2)$.

\subsection{Pointwise gradient estimates}

First, we recall the potential estimates of the gradients of solutions to the stationary equations
\begin{equation}\label{eq:elliptic1}
    -\text{div} \,a(x,Du)=\mu \quad \text{in}\,\,\Omega.
\end{equation}
The following pointwise gradient estimates were established in \cite{duzaar2010gradient} by Duzaar and Mingione for the case when $p\in(2-\frac{1}{n},2]$:
\begin{align*}
    |D u(x)|\leq  c\,\big[\mathbf{I}_1^{|\mu|}(x,R)\big]^\frac{1}{p-1}+c\fint_{B_R(x)}(|D u(y)|+s)\,dy
\end{align*}
holds for any solution $u$ to the equation \eqref{eq:elliptic1} and any ball $B_R(x)\subset\Omega$. Here $B_R(x)\subset \R^n$ denotes the ball centered at $x$ with radius $R$,
$\fint_E$ stands for the integral average over a measurable set $E$, and
\begin{equation}\label{riesz1}
    \mathbf{I}_1^{|\mu|}(x,R):=\int_0^R\frac{|\mu|(B_\rho(x))}{\rho^{n-1}}\,\frac{d\rho}{\rho}
\end{equation}
is the truncated first-order elliptic Riesz potential.
In \cite{dong2021gradient}, we extended the results above to include the case when $p\in \big(\frac{3n-2}{2n-1},2-\frac{1}{n}\big]$ and also derived the following Lipschitz estimates for the case when $p\in \big(1,\frac{3n-2}{2n-1}\big]$:
$$
   \|D u\|_{L^\infty(B_{R/2}(x))}
         \leq c \,\big\|\mathbf{I}_1^{|\mu|}(\cdot,R)\big\|^\frac{1}{p-1}_{L^\infty(B_{R}(x))}+c\, R^{-\frac{n}{2-p}} \||D u|+s\|_{L^{2-p}(B_{R}(x))}.
$$
For more gradient estimates for the elliptic problem in the singular case $p\in(1,2)$, we refer the reader to \cite{nguyen2019good,nguyen2020existence,nguyen2020pointwise}.
 The first gradient potential result for singular parabolic $p$-Laplace type equations was obtained by Kuusi and Mingione in \cite{kuusi2013mingione} for the case when $p\in (2-\frac{1}{n+1},2]$ using intrinsic geometry and exit time arguments. More precisely, they first showed that there exists a constant $c=c(n,p,\nu,L,\omega)$ such that if
$$
c \,\fint_{Q_{r_\lambda}^\lambda(x_0,t_0)} (|Du|+s)\,dxdt +c\, \int_{0}^{2r_\lambda} \frac{|\mu|(Q_\rho^\lambda(x_0,t_0)}{\rho^{n+1}} \frac{d\rho}{\rho}\leq \lambda
$$
for some constant $\lambda>0$, then $|Du(x_0,t_0)|\leq \lambda$.
Here $r_\lambda:=\lambda^{(p-2)/2}r$ and
\begin{equation}\label{eq:ic}
    Q_\rho^\lambda(x_0,t_0):=B_{\rho}(x_0)\times (t_0-\lambda^{2-p} \rho^2, t_0)
\end{equation}
is called an intrinsic cylinder for $\rho,\,\lambda>0$.
They also used the intrinsic Riesz potential result above to establish the following parabolic Riesz potential bound when $p\in (2-\frac{1}{n+1},2]$:
\begin{align*}
    &|Du(x_0,t_0)|\leq c\, [\mathbf{I}_1^{|\mu|}(x_0,t_0,2r)]^{2/[(n+1)p-2n]}
    \\&\quad+c\, \Big(\fint_{Q_r(x_0,t_0)}(|Du|+s+1)\,dxdt\Big)^{2/[2-n(2-p)]}
\end{align*}
holds for any solution $u$ to the equation \eqref{eq:u} in the standard parabolic cylinder
$
    Q_{2r}(x_0,t_0):=B_{2r}(x_0)\times (t_0-4r^2, t_0)\subset \Omega_T.
$
Here
\begin{equation}\label{riesz2}
\mathbf{I}_1^{|\mu|}(x_0,t_0;r):=\int_0^r\frac{|\mu|(Q_\rho(x_0,t_0))}{\rho^{n+1}}\,\frac{d\rho}{\rho}
\end{equation}
is the truncated first-order parabolic Riesz potential. For more notation in parabolic (intrinsic) geometry, see Section \ref{sec2.1} below.

In this paper, we extend their results to include the case when $p\in(p^*(n),2-\frac{1}{n+1}]$, where
\begin{equation}\label{pstar}
    p^*(n):=
    \max\Big\{\frac{2n}{n+1},\frac{3n+2}{2n+2}\Big\}=
    \left\{
    \begin{aligned}
    &\,\,\,\frac{5}{4} \quad &\text{when} \quad n=1,
    \\&\frac{2n}{n+1}  &\text{when}  \quad n\geq 2.
    \end{aligned}
    \right.
\end{equation}
Note that $p^*(n)<2-\frac{1}{n+1}$ holds for every integer $n\geq 1$. Moreover, it is clear that if $p\in(p^*(n),2-\frac{1}{n+1}]$, then
$$0<\max\big\{\frac{n+2}{2(n+1)}, \frac{(2-p)n}{2}\big\}< p-\frac{n}{n+1}\leq 1$$
so that we can choose a constant $q\in(0,1)$ satisfying
\begin{equation}\label{rangeq}
    q\in \Big(\max\big\{\frac{n+2}{2(n+1)}, \frac{(2-p)n}{2}\big\},\, p-\frac{n}{n+1}\Big)\subset (0,1).
\end{equation}

Our first main result is stated as follows.
\begin{theorem}[Intrinsic Riesz potential estimate]\label{thm:r1}
Let $u$ be a solution to \eqref{eq:u} with $p\in(p^*(n),2-\frac{1}{n+1}]$, where $p^*(n)$ is defined in \eqref{pstar}. Let $q\in(0,1)$ satisfy \eqref{rangeq}. Under the assumptions \eqref{ineq:growth}--\eqref{dini}, there exist constants $c\geq 1$ and $R_0\in(0,1/2]$, both depending only on $n$, $p$, $\nu$, $L$, $q$, and $\omega$, such that the following holds for a.e. $(x_0,t_0)\in \Omega_T$: If
\begin{equation}\label{eq:thm1}
 c \,\Big(\fint_{Q_{r_\lambda}^\lambda(x_0,t_0)} (|Du|+s)^q\,dxdt\Big)^{1/q} +c\, \int_{0}^{2r_\lambda} \frac{|\mu|(Q_\rho^\lambda(x_0,t_0))}{\rho^{n+1}} \frac{d\rho}{\rho}\leq \lambda,
\end{equation}
where $\lambda>0$ is a constant,  $r_\lambda:=\lambda^{(p-2)/2}r\in(0, R_0]$,  $Q_{2r_\lambda}^\lambda (x_0,t_0)\subset \Omega_T$, and $Q_\rho^\lambda(x_0,t_0)$ is the intrinsic cylinder defined in \eqref{eq:ic}, then it holds that
$$
|Du(x_0,t_0)|\leq \lambda.
$$
\end{theorem}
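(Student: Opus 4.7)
The plan is to argue by iteration over a shrinking chain of intrinsic cylinders centered at a Lebesgue point $(x_0,t_0)$ of $Du$, in the spirit of the exit-time scheme of \cite{kuusi2013mingione}, but with an $L^q$ excess functional (with $q$ as in \eqref{rangeq}) in place of the $L^1$ one, so as to reach the more singular range $p\in(p^*(n),2-\frac{1}{n+1}]$. Fix a small parameter $\sigma\in(0,1/4]$ to be chosen and write $Q_j:=Q_{\sigma^j r_\lambda}^\lambda(x_0,t_0)$; set
\begin{equation*}
    \Phi_j:=\Big(\fint_{Q_j}(|Du|+s)^q\,dx\,dt\Big)^{1/q},\qquad \Psi_j:=\int_0^{2\sigma^j r_\lambda}\frac{|\mu|(Q_\rho^\lambda(x_0,t_0))}{\rho^{n+1}}\,\frac{d\rho}{\rho}.
\end{equation*}
The hypothesis \eqref{eq:thm1} reads $c(\Phi_0+\Psi_0)\le\lambda$, and the strategy is to prove inductively that $\Phi_j\le M\lambda$ for every $j\ge 0$ with $M\ge 1$ a universal constant; passing to $j\to\infty$ at the Lebesgue point and reabsorbing $M$ into $c$ then yields $|Du(x_0,t_0)|\le\lambda$.

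The inductive step on $Q_j$ is carried out via a two-stage comparison. First, let $v_j$ solve the measure-free Dirichlet problem $\partial_t v_j-\div\,a(x,t,Dv_j)=0$ in $Q_j$ with $v_j=u$ on the parabolic boundary; an energy estimate, rescaled to the intrinsic geometry and taken in $L^q$, gives a bound of the form
\begin{equation*}
    \Big(\fint_{Q_j}|Du-Dv_j|^q\,dx\,dt\Big)^{1/q}\le c\,\lambda\,\bigg(\frac{|\mu|(Q_j)}{\lambda^{p-1}(\sigma^j r_\lambda)^{n+1}}\bigg)^{\!\alpha}
\end{equation*}
for a suitable $\alpha>0$, where the lower bound $q>(2-p)n/2$ from \eqref{rangeq} is precisely what makes the intrinsic Sobolev--Poincar\'e step balance. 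Second, let $w_j$ solve the $x$-frozen equation $\partial_t w_j-\div\,a(x_0,t,Dw_j)=0$ in $Q_j$ with $w_j=v_j$ on the parabolic boundary; the oscillation assumption \eqref{ineq:osi} produces a comparison estimate of the same type with $\omega(\sigma^j r_\lambda)$ in place of the measure factor. The essential final ingredient is intrinsic Lipschitz regularity plus a Campanato-type $L^q$-decay for $Dw_j$ at the smaller scale $Q_{j+1}$, valid in the range $p>p^*(n)$. Assembling the three pieces and absorbing the $\sigma^{-(n+2)/q}$ loss from $Q_{j+1}\subset Q_j$ produces a recursion of the schematic form
\begin{equation*}
    \Phi_{j+1}\le \sigma^{\beta}\Phi_j+c\,\omega(\sigma^j r_\lambda)(\Phi_j+\lambda)+c\,\lambda\,\bigg(\frac{|\mu|(Q_j)}{\lambda^{p-1}(\sigma^j r_\lambda)^{n+1}}\bigg)^{\!\alpha}
\end{equation*}
for some $\beta>0$. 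Choosing $\sigma$ small, iterating, and summing using the Dini condition \eqref{dini} together with the discrete-to-continuous comparison $\sum_j |\mu|(Q_j)(\sigma^j r_\lambda)^{-(n+1)}\le c\,\Psi_0/\log(1/\sigma)$ then closes the induction.

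The principal difficulty is the intrinsic $L^q$-Lipschitz estimate and the Campanato decay for the homogeneous comparison $w_j$ in the genuinely singular range $p\le 2-\frac{1}{n+1}$: a Caccioppoli inequality at the intrinsic scale $\lambda$ only delivers an $L^{2-p}$-type a priori bound on $Dw_j$, so extracting a sup bound controllable by the $L^q$-excess $\Phi_j$ requires a reverse-H\"older chain and a Moser iteration carefully adapted to the intrinsic geometry, in the spirit of the elliptic argument in \cite{dong2021gradient} but in the parabolic setting. The two constraints in \eqref{rangeq}, namely $q>(2-p)n/2$ and $q>(n+2)/(2(n+1))$, are precisely those needed for the Sobolev--Poincar\'e balance in the measure comparison and for the later passage from intrinsic to standard cylinders when this theorem is converted into a bound by the linear parabolic Riesz potential \eqref{riesz2}; the range $p>p^*(n)$ is exactly the one in which these two conditions are compatible with the upper bound $q<p-n/(n+1)$. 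Once the intrinsic Lipschitz estimate and its Campanato decay are secured, the remaining exit-time calibration of $\lambda$ and the final iteration proceed in close parallel to \cite{kuusi2013mingione}.
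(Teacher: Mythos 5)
There is a genuine gap in the central recursion. You define $\Phi_j:=\big(\fint_{Q_j}(|Du|+s)^q\,dx\,dt\big)^{1/q}$ and propose the schematic contraction
$\Phi_{j+1}\le\sigma^{\beta}\Phi_j+c\,\omega(\sigma^j r_\lambda)(\Phi_j+\lambda)+c\,\lambda\,(\cdot)^{\alpha}$
with $\sigma^{\beta}<1$. This cannot hold: at a Lebesgue point $(x_0,t_0)$ of $Du$ one has $\Phi_j\to|Du(x_0,t_0)|+s$, which is generically nonzero, while the perturbation terms $\omega(\sigma^j r_\lambda)$ and $|\mu|(Q_j)/(\sigma^j r_\lambda)^{n+1}$ tend to zero under \eqref{dini} and the finiteness of the Riesz potential. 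Passing to the limit in your recursion forces $\Phi_\infty\le\sigma^{\beta}\Phi_\infty$ and hence $\Phi_\infty=0$, a contradiction unless $Du(x_0,t_0)=0$ and $s=0$. The quantity that admits a geometric decay is not the $L^q$-average but the $L^q$-\emph{mean oscillation} $\phi_q(Du,Q_j)=\inf_{\Theta}\big(\fint_{Q_j}|Du-\Theta|^q\big)^{1/q}$, and one must simultaneously track the best approximating vector $\mathbf{m}_j:=\mathbf{m}(Du,Q_j)$, which is the object that converges to $Du(x_0,t_0)$. The paper's iteration is coupled: it proves $\phi_q(Du,Q_{i+1})\le\frac14\phi_q(Du,Q_i)+(\text{Dini}+\text{measure terms})$ together with a running bound on $|\mathbf{m}_j|$, and the quantity shown to stay below $\lambda/2$ is $\phi_q(Du,Q_j)+|\mathbf{m}_j|+s$; the conclusion follows because $|\mathbf{m}_j|\to|Du(x_0,t_0)|$.

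A second, related omission is that the decay of $\phi_q(Dv,\cdot)$ for the homogeneous frozen-coefficient solution is \emph{conditional}: Theorem \ref{thm:iter2} requires a nondegeneracy lower bound $\lambda\le B\sup_{\delta_\gamma Q}\|Dv\|$, which is by no means automatic in the singular range and can fail at small scales. That is precisely why the paper runs an \emph{exit-time} argument: before the exit time the composite quantity $C_i$ is below $\lambda/100$ and nothing needs to be proved, and after the exit time one uses the lower bound $C_i>\lambda/100$ together with the comparison estimates (Lemmas \ref{lem:supinf}, \ref{lem:u-v}) and the oscillation bound (Theorem \ref{thm:osc}) to establish the nondegeneracy $\sup\|Dv\|\ge\lambda/B$ that activates the Campanato decay. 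In your write-up the exit time is mentioned only as a residual ``calibration,'' but it is actually load-bearing: without it the hypothesis of Theorem \ref{thm:iter2} cannot be verified and the recursion has no contraction step. (By contrast, your choices of comparison problems, the role of the constraints in \eqref{rangeq}, and the need for a reverse-H\"older/Lipschitz estimate for the homogeneous solution are all correctly identified and match the paper's Lemma \ref{lem:u-w}, Theorem \ref{rev}, Theorem \ref{lip}, and Lemma \ref{lem:w-v}, albeit with $v$ and $w$ interchanged in name.)
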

Theorem \ref{thm:r1} implies pointwise gradient estimates in standard parabolic cylinders as in Theorem \ref{thm:int} and Corollary \ref{cor:1} below.
\begin{theorem}[Pointwise gradient estimate via parabolic Riesz potential]\label{thm:int}
Let $u$ be a solution to \eqref{eq:u} with $p\in(p^*(n),2-\frac{1}{n+1}]$, where $p^*(n)$ is defined in \eqref{pstar}. Let $q\in(0,1)$ satisfy \eqref{rangeq}. Under the assumptions \eqref{ineq:growth}--\eqref{dini}, there exist constants $c\geq 1$ and $R_0\in(0,1/2]$, both depending only on $n$, $p$, $\nu$, $L$, $q$, and $\omega$, such that
\begin{equation}\label{eq:thm2}
    \begin{aligned}
    |Du(x_0,t_0)|&\leq c\, [\mathbf{I}_1^{|\mu|}(x_0,t_0,2r)]^{2/[(n+1)p-2n]}\\&\quad+c\, \Big(\fint_{Q_r(x_0,t_0)}(|Du|+s+1)^q\,dxdt\Big)^{2q/[2q-n(2-p)]}
\end{aligned}
\end{equation}
holds for a.e. $(x_0,t_0)\in\Omega_T$ and every  $Q_{2r}(x_0,t_0)\equiv B_{2r}(x_0)\times  (t_0-4r^2,t_0)\subset \Omega_T$ with $r\in(0, R_0]$, where $\mathbf{I}_1^{|\mu|}$ is the parabolic Riesz potential defined in \eqref{riesz2}.
\end{theorem}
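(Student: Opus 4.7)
The plan is to deduce Theorem \ref{thm:int} directly from the intrinsic Riesz potential estimate of Theorem \ref{thm:r1}. As is standard in potential-theoretic arguments of this type, I would take the parameter $\lambda$ in \eqref{eq:thm1} to be (a large constant times) the right-hand side of \eqref{eq:thm2} and then verify the smallness condition \eqref{eq:thm1} for this choice. The only substantive task is bookkeeping the intrinsic scaling $r_\lambda=\lambda^{(p-2)/2}r$ against the standard parabolic cylinder $Q_r(x_0,t_0)$, and the main obstacle is arranging $\lambda\ge 1$ so that the spatial intrinsic ball $B_{r_\lambda}(x_0)$ sits inside $B_r(x_0)$.

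Concretely, I would set
\[
\lambda := B\Big\{[\mathbf{I}_1^{|\mu|}(x_0,t_0,2r)]^{2/[(n+1)p-2n]} + \Big(\fint_{Q_r(x_0,t_0)}(|Du|+s+1)^q\,dxdt\Big)^{2q/[2q-n(2-p)]}\Big\}
\]
for a constant $B\ge 1$ to be fixed below. Thanks to the ``$+1$'' shift, the averaged integrand is $\ge 1$, so $\lambda\ge B\ge 1$ and consequently $r_\lambda=\lambda^{(p-2)/2}r\le r$. Moreover $\lambda^{2-p}r_\lambda^2=r^2$, so $Q^\lambda_{r_\lambda}(x_0,t_0)$ and $Q_r(x_0,t_0)$ share the same time interval, whence $Q^\lambda_{r_\lambda}(x_0,t_0)\subset Q_r(x_0,t_0)$. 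The range constraints \eqref{pstar} and \eqref{rangeq} guarantee $(n+1)p-2n>0$ and $2q-n(2-p)>0$, so both exponents in the definition of $\lambda$ are positive and finite.

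Next I would bound each summand on the left-hand side of \eqref{eq:thm1} by $\lambda/2$. For the $L^q$-average, the volume ratio $|Q_r|/|Q^\lambda_{r_\lambda}|=\lambda^{n(2-p)/2}$ yields
\[
\Big(\fint_{Q^\lambda_{r_\lambda}}(|Du|+s)^q\,dxdt\Big)^{1/q}\le \lambda^{n(2-p)/(2q)}\Big(\fint_{Q_r}(|Du|+s+1)^q\,dxdt\Big)^{1/q},
\]
and choosing $B$ large enough relative to the constant $c$ of Theorem \ref{thm:r1}, together with the identity $1-n(2-p)/(2q)=[2q-n(2-p)]/(2q)$, absorbs the $\lambda$-factor via the definition of $\lambda$. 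For the Riesz potential term, $\lambda\ge 1$ gives $Q^\lambda_\rho(x_0,t_0)\subset Q_{\lambda^{(2-p)/2}\rho}(x_0,t_0)$, and the substitution $\sigma=\lambda^{(2-p)/2}\rho$ produces
\[
\int_0^{2r_\lambda}\frac{|\mu|(Q^\lambda_\rho(x_0,t_0))}{\rho^{n+1}}\frac{d\rho}{\rho}\le \lambda^{(2-p)(n+1)/2}\,\mathbf{I}_1^{|\mu|}(x_0,t_0,2r);
\]
using the identity $2-(2-p)(n+1)=(n+1)p-2n$ and again the definition of $\lambda$, this is $\le \lambda/2$ for $B$ large. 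Hypothesis \eqref{eq:thm1} is then verified, and Theorem \ref{thm:r1} immediately gives $|Du(x_0,t_0)|\le \lambda$, which is exactly \eqref{eq:thm2}. The condition $r_\lambda\in(0,R_0]$ follows from $r_\lambda\le r\le R_0$, so no extra smallness on $r$ needs to be imposed.
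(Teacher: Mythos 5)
Your overall strategy is legitimate and genuinely different from the paper's. Rather than guessing $\lambda$ and checking, the paper considers $h(\lambda)=\lambda - c\lambda^{n(2-p)/(2q)}A(\lambda)-c\lambda^{(n+1)(2-p)/2}B(\lambda)$ with $A,B$ nonincreasing, shows $h(1)\le 0$ and $h(\lambda)\to\infty$, applies the intermediate value theorem to produce $\lambda\ge 1$ with $h(\lambda)=0$, invokes Theorem \ref{thm:r1}, and then dissolves $\lambda$ via Young's inequality. Your route is more elementary: it sidesteps the IVT and the monotonicity of $A$ and $B$ entirely. Your geometric bookkeeping (intrinsic cylinder inside standard cylinder because $\lambda\ge 1$; volume ratio $\lambda^{n(2-p)/2}$; the change of variables in the Riesz potential yielding the factor $\lambda^{(2-p)(n+1)/2}$) is correct.

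However, there is a genuine gap in the absorption of the $L^q$-average term, and it traces to the exponent in \eqref{eq:thm2}. Write $I:=\fint_{Q_r}(|Du|+s+1)^q\,dxdt$. You set $\lambda\ge B\,I^{2q/[2q-n(2-p)]}$ and want $c\lambda^{n(2-p)/(2q)}I^{1/q}\le\lambda/2$, i.e.\ $I^{1/q}\le\tfrac{1}{2c}\lambda^{[2q-n(2-p)]/(2q)}$. From your choice of $\lambda$ you get $\lambda^{[2q-n(2-p)]/(2q)}\ge B^{[2q-n(2-p)]/(2q)}\,I$, so the needed inequality reduces to $I^{1/q-1}\le B^{[2q-n(2-p)]/(2q)}/(2c)$. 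Since $q<1$ and $I\ge 1$ is unbounded, no fixed $B$ makes this hold, so the absorption fails as written. The correct choice is $\lambda\ge B\,I^{2/[2q-n(2-p)]}$ (equivalently $\lambda\ge B\,[\,(\fint)^{1/q}\,]^{2q/[2q-n(2-p)]}$), for which $\lambda^{[2q-n(2-p)]/(2q)}\ge B^{[2q-n(2-p)]/(2q)}\,I^{1/q}$ and the factor $I^{1/q}$ cancels cleanly. With that choice your argument goes through for both summands and yields the theorem. Note that this is also exactly what the paper's own Young's-inequality step produces: with $A(1)=(\fint)^{1/q}$ one gets $[A(1)]^{2q/[2q-n(2-p)]}=(\fint)^{2/[2q-n(2-p)]}$, not $(\fint)^{2q/[2q-n(2-p)]}$. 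So \eqref{eq:thm2} as stated appears to carry a typographical slip in the exponent, and your computation, carried to the end, would have exposed it; you should redo the absorption carefully and state which exponent you are actually proving.
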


\begin{corollary}[Pointwise gradient estimate via elliptic Riesz potential]\label{cor:1}
Let $u$ be a solution to \eqref{eq:u} with $p\in(p^*(n),2-\frac{1}{n+1}]$, where $p^*(n)$ is defined in \eqref{pstar} and assume that $\mu=\mu_0\otimes f$, where $\mu_0$ is a finite signed Radon measure on $\R^n$ and $f\in L^\infty(-T,0)$. Let $q\in(0,1)$ satisfy \eqref{rangeq}.  Under the assumptions \eqref{ineq:growth}--\eqref{dini}, there exist constants $c\geq 1$ and $R_0\in(0,1/2]$, both depending only on $n$, $p$, $\nu$, $L$, $q$, and $\omega$, such that
\begin{equation*}
    \begin{aligned}
    |Du(x_0,t_0)|&\leq c\,\|f\|_{L^\infty}^{1/(p-1)} [\mathbf{I}_1^{|\mu_0|}(x_0,2r)]^{1/(p-1)}\\
    &\quad+c\, \Big(\fint_{Q_r(x_0,t_0)}(|Du|+s+1)^q\,dxdt\Big)^{2q/[2q-n(2-p)]}
\end{aligned}
\end{equation*}
holds for a.e. $(x_0,t_0)\in\Omega_T$ and every  $Q_{2r}(x_0,t_0)\equiv B_{2r}(x_0)\times  (t_0-4r^2,t_0)\subset \Omega_T$ with $r\in(0, R_0]$, where $\mathbf{I}_1^{|\mu_0|}$ is the classical Riesz potential defined in \eqref{riesz1}.
\end{corollary}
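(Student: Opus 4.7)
The plan is to deduce Corollary \ref{cor:1} directly from the intrinsic Riesz potential estimate of Theorem \ref{thm:r1} by exploiting the product structure $\mu=\mu_0\otimes f$. The key observation is that, because the time interval of the intrinsic cylinder $Q_\rho^\lambda(x_0,t_0)$ has length $\lambda^{2-p}\rho^2$ and $f\in L^\infty(-T,0)$, we have
\[
|\mu|(Q_\rho^\lambda(x_0,t_0))\;\leq\; \|f\|_{L^\infty}\,\lambda^{2-p}\rho^{2}\,|\mu_0|(B_\rho(x_0)).
\]
Dividing by $\rho^{n+1}$ and integrating in $\rho$ converts the truncated parabolic Riesz potential into the elliptic one, giving
\[
\int_0^{2r_\lambda}\frac{|\mu|(Q_\rho^\lambda(x_0,t_0))}{\rho^{n+1}}\,\frac{d\rho}{\rho}
\;\leq\;\|f\|_{L^\infty}\,\lambda^{2-p}\,\mathbf{I}_1^{|\mu_0|}(x_0,2r_\lambda),
\]
so the factor $\lambda^{2-p}$ is precisely what converts the parabolic-Riesz exponent $2/[(n+1)p-2n]$ into the elliptic exponent $1/(p-1)$ once we solve for $\lambda$.

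Next, I would choose $\lambda$ to be the maximum of $1$, the term $(2c\|f\|_{L^\infty}\mathbf{I}_1^{|\mu_0|}(x_0,2r))^{1/(p-1)}$ and the term $\bigl(2c(\fint_{Q_r}(|Du|+s+1)^q\,dxdt)^{1/q}\bigr)^{2q/[2q-n(2-p)]}$, where $c$ is the constant from Theorem \ref{thm:r1}. Since $p<2$ and $\lambda\geq 1$, the exponent $(p-2)/2$ is nonpositive, so $r_\lambda=\lambda^{(p-2)/2}r\leq r\leq R_0$ and $Q_{2r_\lambda}^\lambda\subset Q_{2r}(x_0,t_0)\subset\Omega_T$. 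Moreover, because $Q_{r_\lambda}^\lambda$ has the same time length $r^2$ as $Q_r(x_0,t_0)$ but a smaller spatial ball, $Q_{r_\lambda}^\lambda\subset Q_r$, and a trivial volume comparison ($|Q_r|/|Q_{r_\lambda}^\lambda|=\lambda^{(2-p)n/2}$) yields
\[
\Bigl(\fint_{Q_{r_\lambda}^\lambda}(|Du|+s)^q\,dxdt\Bigr)^{1/q}\;\leq\;\lambda^{(2-p)n/(2q)}\Bigl(\fint_{Q_r}(|Du|+s+1)^q\,dxdt\Bigr)^{1/q}.
\]
By construction of $\lambda$, each of the two terms on the left-hand side of the hypothesis \eqref{eq:thm1} of Theorem \ref{thm:r1} is bounded by $\lambda/2$ (using monotonicity $\mathbf{I}_1^{|\mu_0|}(x_0,2r_\lambda)\leq\mathbf{I}_1^{|\mu_0|}(x_0,2r)$ for the measure term, and the volume comparison above for the gradient term, combined with $1-(2-p)n/(2q)=[2q-n(2-p)]/(2q)>0$ by our choice of $q$ in \eqref{rangeq}). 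Theorem \ref{thm:r1} then yields $|Du(x_0,t_0)|\leq\lambda$, which is exactly the bound asserted in the corollary.

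The proof is entirely structural and has no substantive analytic obstacle; once Theorem \ref{thm:r1} is granted, the only thing that requires a little care is ensuring $\lambda\geq 1$, which is what allows the inclusion $Q_{r_\lambda}^\lambda\subset Q_r$ and the elementary volume comparison. Including the harmless $+1$ inside the averaged gradient term absorbs the case of small $\lambda$ and makes the comparison clean. Note also that the exponent $2q/[2q-n(2-p)]$ is positive precisely because $q>(2-p)n/2$, which is built into the admissible range \eqref{rangeq}.
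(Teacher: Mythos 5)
Your argument is correct and rests on the same two ingredients as the paper's proof: the intrinsic Riesz potential estimate of Theorem~\ref{thm:r1}, and the observation that the product structure $\mu=\mu_0\otimes f$ gives $|\mu|(Q_\rho^\lambda)\le\|f\|_{L^\infty}\lambda^{2-p}\rho^2|\mu_0|(B_\rho)$, so the truncated parabolic Riesz potential is controlled by $\|f\|_{L^\infty}\lambda^{2-p}\mathbf{I}_1^{|\mu_0|}(x_0,2r_\lambda)$ and the factor $\lambda^{2-p}$ converts the exponent $2/[(n+1)p-2n]$ to the elliptic exponent $1/(p-1)$. The paper obtains the corollary by re-running the proof of Theorem~\ref{thm:int} — an intermediate value argument that finds $\lambda\ge1$ with $h(\lambda)=0$, followed by Young's inequality, with the only new step being the Kuusi--Mingione bound $2c\lambda^{(n+1)(2-p)/2}B(\lambda)\le\lambda/4+c''\|f\|_{L^\infty}^{1/(p-1)}[\mathbf{I}_1^{|\mu_0|}(x_0,2r)]^{1/(p-1)}$. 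You instead define $\lambda$ explicitly as the maximum of $1$ and the two target quantities and verify the hypothesis of Theorem~\ref{thm:r1} directly, which bypasses both the continuity argument and Young's inequality. The two routes are interchangeable: the paper's is more systematic (it recycles Theorem~\ref{thm:int}'s scaffolding so that only the $B(\lambda)$ bound changes), while yours is more self-contained and a touch more elementary for this particular corollary. All the subsidiary checks you make — $\lambda\ge1$ so that $r_\lambda\le r\le R_0$ and $Q_{2r_\lambda}^\lambda\subset Q_{2r}\subset\Omega_T$, the inclusion $Q_{r_\lambda}^\lambda\subset Q_r$ with volume ratio $\lambda^{n(2-p)/2}$, the monotonicity $\mathbf{I}_1^{|\mu_0|}(x_0,2r_\lambda)\le\mathbf{I}_1^{|\mu_0|}(x_0,2r)$, and the positivity of $1-n(2-p)/(2q)$ from \eqref{rangeq} — are exactly the ones needed, and the inclusion of the harmless $+1$ plays the same role as in the paper.
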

\subsection{Gradient continuity results}

In \cite{kuusi2013mingione}, the authors proved a sufficient condition for gradient continuity in the case when $p\in(2-\frac{1}{n+1},2)$, namely, the Riesz potential $\mathbf{I}_1^{|\mu|}(x_0,t_0,r)\to 0$ uniformly with respect to $(x_0,t_0)$ when $r\to 0$. We extend that result to the case when $p\in(p^*(n),2-\frac{1}{n+1}]$.

\begin{theorem}[Gradient continuity via Riesz potential]\label{thm:cty}
Let $u$ be a solution to \eqref{eq:u} with $p\in(p^*(n),2-\frac{1}{n+1}]$, where $p^*(n)$ is defined in \eqref{pstar}. Assume that \eqref{ineq:growth}--\eqref{dini} are satisfied and that the functions
\begin{equation}\label{asp1}
(x,t)\;\mapsto \mathbf{I}_1^{|\mu|}(x,t,r) \text{ converge locally uniformly to zero in } \Omega_T \text{ as } r\;\to\;0.
\end{equation}
Then $Du$ is continuous in $\Omega_T$.
\end{theorem}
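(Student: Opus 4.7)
The plan is to run the Kuusi--Mingione oscillation iteration on intrinsic cylinders, using Theorem \ref{thm:int} as the initial $L^\infty$ input and the Dini condition \eqref{dini} together with the uniform convergence \eqref{asp1} as the smallness input.

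First I would fix $Q_0' \subset\subset Q_0 \subset\subset \Omega_T$. Since \eqref{asp1} forces $\mathbf{I}_1^{|\mu|}$ to be locally bounded, Theorem \ref{thm:int} delivers $\Lambda := \|Du\|_{L^\infty(Q_0)} + 1 < \infty$, and it suffices to exhibit a common modulus of continuity for $Du$ at every point of $Q_0'$. Fix $(x_0,t_0) \in Q_0'$, choose $\lambda \approx \Lambda$ compatible with Theorem \ref{thm:r1}, and work on the intrinsic cylinders $Q_{\rho_k}^{\lambda}(x_0,t_0)$ with $\rho_k = \sigma^k \rho_0$ for a small fixed $\sigma \in (0,1)$ to be chosen.

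On each such cylinder I introduce the frozen-coefficient comparison $v_k$ solving
\begin{equation*}
(v_k)_t - \div a(x_0,t,Dv_k) = 0 \quad \text{in } Q_{\rho_k}^{\lambda}(x_0,t_0), \qquad v_k = u \text{ on the parabolic boundary}.
\end{equation*}
The classical DiBenedetto-type regularity for the homogeneous singular parabolic $p$-Laplace system, as refined in the Kuusi--Mingione framework, yields an intrinsic H\"older oscillation decay
\begin{equation*}
\osc_{Q_{\tau\rho_k}^{\lambda}} Dv_k \leq c\, \tau^{\alpha} \lambda, \qquad \tau \in (0,1],
\end{equation*}
for some $\alpha = \alpha(n,p,\nu,L) \in (0,1)$. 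An $L^q$-comparison estimate of the type already available from the proofs of Theorems \ref{thm:r1}--\ref{thm:int} (with the exponent $q \in (0,1)$ of \eqref{rangeq}) then supplies, on the same cylinder,
\begin{equation*}
\Bigl(\fint_{Q_{\rho_k}^{\lambda}} |Du - Dv_k|^q \, dx\,dt\Bigr)^{1/q} \leq c\,\omega(\rho_k)\,\lambda + c\,\lambda\, E_k,
\end{equation*}
where $E_k$ is an intrinsic measure-density error at scale $\rho_k$ satisfying $\sum_{k} E_k \leq c\,[\mathbf{I}_1^{|\mu|}(x_0,t_0,\rho_0)]^{\beta}$ for some $\beta = \beta(n,p,q) > 0$. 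Combining the two estimates and choosing $\sigma$ small in terms of $\alpha$ gives a recursive inequality of the form
\begin{equation*}
\osc_{Q_{\rho_{k+1}}^{\lambda}} Du \leq \tfrac{1}{2} \osc_{Q_{\rho_k}^{\lambda}} Du + c\bigl[\omega(\rho_k) + E_k\bigr]\lambda.
\end{equation*}

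Iterating and summing, the geometric term decays; $\sum_k \omega(\rho_k)$ is bounded by $\int_0^{\rho_0} \omega(\rho) \frac{d\rho}{\rho}$, which is finite by \eqref{dini} and arbitrarily small for small $\rho_0$; and $\sum_k E_k$ is controlled, uniformly in $(x_0,t_0) \in Q_0'$, by a power of $\mathbf{I}_1^{|\mu|}(x_0,t_0,\rho_0)$, which tends to zero uniformly by \eqref{asp1}. Translating the intrinsic oscillation bounds back to standard cylinders is harmless since the scaling parameter is fixed at $\lambda \approx \Lambda$, and I conclude a uniform modulus of continuity of $Du$ on $Q_0'$, hence $Du \in C^0(\Omega_T)$. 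The main obstacle will be the $L^q$-comparison step in the very singular regime $p \in (p^*(n), 2-1/(n+1)]$: the exponent $q < 1$ mandated by \eqref{rangeq} requires the same sub-linear, Gehring-type arguments that underpin Theorem \ref{thm:r1}, and one must in addition track the transition between intrinsic scales and verify that the recursion is stable with constants independent of $k$. Beyond this, the argument is a reorganization of the tools already developed for Theorems \ref{thm:r1}--\ref{thm:int}.
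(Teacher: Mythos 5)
Your high-level strategy (frozen-coefficient comparison $v_k$, oscillation decay for $Dv_k$, measure/Dini error control, iterate on intrinsic cylinders at a fixed scale $\lambda\approx\Lambda$) matches the spirit of the paper, but the central recursion you write down,
\begin{equation*}
\osc_{Q_{\rho_{k+1}}^{\lambda}} Du \leq \tfrac{1}{2} \osc_{Q_{\rho_k}^{\lambda}} Du + c\bigl[\omega(\rho_k) + E_k\bigr]\lambda,
\end{equation*}
is not justified and, in the singular regime treated here, cannot be: the comparison estimate between $Du$ and $Dv_k$ is only available in $L^q$ with $q<1$ (Lemma \ref{lem:u-v}), while $\osc Du$ is an $L^\infty$ quantity. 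There is no mechanism for transferring $L^q$-closeness and an $L^\infty$ oscillation bound on $Dv_k$ into an $L^\infty$ oscillation bound on $Du$. This is precisely the obstacle the paper works around by replacing $\osc$ with the $L^q$-mean oscillation $\phi_q$ throughout (Section \ref{sec2.2}); the correct recursion appears in Lemma \ref{lem:vtou} and Lemma \ref{lem5.8.1} in the form $\phi_q(Du, Q_{i+1}) \leq \tfrac14\phi_q(Du, Q_i) + c\,\delta_1^{-(n+2)/q}\bigl[|\mu|(Q_i)/r_i^{n+1} + \omega(r_i)\lambda\bigr]$, and then extra work (Proposition \ref{lem:conv}) is needed to upgrade decay of $\phi_q$ to pointwise convergence of the means $\mathbf m(Du,Q_i)$ and hence continuity.

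A second gap: you implicitly assume the oscillation/mean-oscillation decay for $Dv_k$ is available at every scale, but Theorem \ref{thm:iter2} requires the non-degeneracy condition $\lambda \leq B\sup_{\delta_\gamma Q_r^\lambda}\|Dv\|$. In the singular case this can fail, and the paper handles it with a case split: either $(\fint_{Q_i}|Du|^q)^{1/q} < (\epsilon/10)\lambda_M$, in which case $\phi_q(Du,Q_i)$ is automatically small, or it is $\geq (\epsilon/10)\lambda_M$, in which case Lemma \ref{lem:supinf} supplies the lower bound needed to invoke Theorem \ref{thm:iter2}. Your recursion would need this alternation built in; a blind iteration cannot be run uniformly in $k$. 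The exit-time argument in Lemma \ref{lem:diff5} (the ``maximal iteration chain'') is exactly the organizing device that makes the sum over scales converge to a uniform modulus of continuity, and it does not appear in your proposal. You correctly flagged the $L^q$ comparison as the main obstacle, but the proposal as written does not resolve it; the fix is to carry out the entire iteration at the level of $\phi_q$ and to insert the case split on the size of the $L^q$-average.
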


Recall the Lorentz space $L^{n+2,1}$ is the collection of measurable functions $f$ such that
$$
\int_0^\infty |\{(x,t):\,|f(x,t)|\ge h\}|^{\frac {1} {n+2}}\,{dh}<\infty.
$$
Theorem \ref{thm:cty} has the following corollary.

\begin{corollary}[Gradient continuity via Lorentz spaces]\label{thm1.3}
Let $u$ be a solution to \eqref{eq:u} with $p\in(p^*(n),2-\frac{1}{n+1}]$, where $p^*(n)$ is defined in \eqref{pstar}. Assume that
\eqref{ineq:growth}--\eqref{dini} are satisfied and that
\begin{equation}\label{asp2}
  \mu\in L^{n+2,1} \text{ holds locally in }\Omega_T.
\end{equation}
Then $Du$ is continuous in $\Omega_T$.
\end{corollary}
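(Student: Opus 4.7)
The plan is to reduce Corollary \ref{thm1.3} directly to Theorem \ref{thm:cty}. Since the conclusion of Theorem \ref{thm:cty} already gives continuity of $Du$ under the local uniform vanishing \eqref{asp1} of $\mathbf{I}_1^{|\mu|}(x,t,r)$, it suffices to show: if $\mu$ is (locally) absolutely continuous with density $f\in L^{n+2,1}_{\mathrm{loc}}(\Omega_T)$, then for every compact set $K\subset\Omega_T$ one has
\[
\lim_{r\to 0^+}\;\sup_{(x_0,t_0)\in K}\mathbf{I}_1^{|\mu|}(x_0,t_0;r)=0.
\]

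First, I would rewrite the parabolic Riesz potential in pointwise ``kernel'' form. Setting the parabolic distance $d_P(x,t;x_0,t_0):=\max\{|x-x_0|,\sqrt{(t_0-t)_+}\}$ and applying Fubini to \eqref{riesz2}, one gets
\begin{equation*}
\mathbf{I}_1^{|\mu|}(x_0,t_0;r)
=\int_{Q_r(x_0,t_0)}\!\int_{d_P(x,t;x_0,t_0)}^{r}\!\frac{d\rho}{\rho^{n+2}}\,|f(x,t)|\,dxdt
\leq \frac{1}{n+1}\int_{Q_r(x_0,t_0)}\!\frac{|f(x,t)|}{d_P(x,t;x_0,t_0)^{n+1}}\,dxdt.
\end{equation*}
The kernel $K_{x_0,t_0}(x,t):=d_P(x,t;x_0,t_0)^{-(n+1)}\mathbf{1}_{Q_r(x_0,t_0)}(x,t)$ satisfies $|\{K_{x_0,t_0}>\lambda\}|\leq C\lambda^{-(n+2)/(n+1)}$ because its superlevel sets are parabolic cylinders of parabolic radius $\lambda^{-1/(n+1)}$; hence $K_{x_0,t_0}\in L^{(n+2)/(n+1),\infty}$ with norm bounded by a constant independent of $r$ and $(x_0,t_0)$.

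Since $(n+2)/(n+1)$ and $n+2$ are conjugate, the generalized Hölder inequality in Lorentz spaces then yields
\begin{equation*}
\mathbf{I}_1^{|\mu|}(x_0,t_0;r)\leq C\,\|K_{x_0,t_0}\|_{L^{(n+2)/(n+1),\infty}}\,\|f\|_{L^{n+2,1}(Q_r(x_0,t_0))}\leq C\,\|f\|_{L^{n+2,1}(Q_r(x_0,t_0))}.
\end{equation*}
It remains to prove that the right-hand side tends to $0$ as $r\to 0$ uniformly in $(x_0,t_0)\in K$. I would fix a compact set $K'\subset\Omega_T$ containing the parabolic $r_0$-neighborhood of $K$ for some small $r_0>0$, and given $\epsilon>0$, approximate $f\mathbf{1}_{K'}$ in the $L^{n+2,1}$ norm by a bounded, compactly supported $g$ with $\|f\mathbf{1}_{K'}-g\|_{L^{n+2,1}}<\epsilon$. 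Using $\|\mathbf{1}_{Q_r}\|_{L^{n+2,1}}\sim r$, one gets $\|g\|_{L^{n+2,1}(Q_r(x_0,t_0))}\leq C\,\|g\|_{L^\infty}\,r$, uniformly in $(x_0,t_0)$, so that
\[
\limsup_{r\to 0}\;\sup_{(x_0,t_0)\in K}\|f\|_{L^{n+2,1}(Q_r(x_0,t_0))}\leq C\epsilon,
\]
and letting $\epsilon\to 0$ completes the verification of \eqref{asp1}.

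The main point requiring care is the density/approximation step combined with the uniformity in $(x_0,t_0)$ — in particular, that the $L^{n+2,1}$ absolute continuity of the norm transfers to a uniform statement over translates. This is routine but must be handled using a bounded, compactly supported approximation as above, since translation continuity of the Lorentz norm alone would not immediately give uniformity on the whole compact set $K$. The two auxiliary Lorentz-space facts (the weak bound on $K_{x_0,t_0}$ and the Hölder inequality $L^{(n+2)/(n+1),\infty}\cdot L^{n+2,1}\hookrightarrow L^1$) are standard; given them, the corollary follows immediately from Theorem \ref{thm:cty}.
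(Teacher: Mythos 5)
Your proof is correct and follows the same route as the paper: both reduce the corollary to Theorem~\ref{thm:cty} by verifying that $\mu\in L^{n+2,1}$ locally forces $\mathbf{I}_1^{|\mu|}(x,t,r)\to 0$ locally uniformly, i.e.~condition \eqref{asp1}. The only difference is that the paper simply cites \cite[Lemma 2.1]{kuusi2013mingione} for this implication, whereas you rederive it from scratch via the Fubini rewriting of the truncated potential, the weak-$L^{(n+2)/(n+1)}$ bound on the parabolic kernel, Lorentz--H\"older duality, and a density approximation to get the uniformity over $K$ --- all of which is sound.
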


A further, actually immediate, corollary of Theorem \ref{thm:cty} concerns measures with certain density properties.
\begin{corollary}[Gradient continuity via density]\label{thm1.4}
Let $u$ be a solution to \eqref{eq:u} with $p\in(p^*(n),2-\frac{1}{n+1}]$, where $p^*(n)$ is defined in \eqref{pstar}. Assume that \eqref{ineq:growth}--\eqref{dini} are satisfied and that $\mu$ satisfies
\begin{equation}\label{asp3}
|\mu|(Q_\rho(x,t))\leq c_D\rho^{n+1}h(\rho)
\end{equation}
for every standard parabolic cylinder $Q_{\rho}(x,t)=B_\rho(x)\times (t_0-\rho^2, t_0)\subset\subset\Omega_T$, where $c_D$ is a positive constant and $h:[0,\infty)\to[0,\infty)$ is a function satisfying the Dini condition
\begin{equation}\label{asp4}
\int_0^R h(r)\,\frac{dr}{r}<\infty \text{ for some }R>0.
\end{equation}
Then $Du$ is continuous in $\Omega_T$.
\end{corollary}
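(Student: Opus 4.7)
The plan is to reduce Corollary \ref{thm1.4} directly to Theorem \ref{thm:cty} by verifying that the density hypothesis \eqref{asp3}--\eqref{asp4} implies the uniform decay of the parabolic Riesz potential required in \eqref{asp1}. The only subtlety is that the density bound \eqref{asp3} is imposed only for cylinders compactly contained in $\Omega_T$, so the verification must be carried out on an arbitrary compact subset of $\Omega_T$ and upgraded to local uniform convergence.

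Fix an arbitrary compact set $K\subset\subset\Omega_T$ and let $d>0$ be chosen (as a suitable fraction of the parabolic distance from $K$ to $\partial\Omega_T$) so that $Q_\rho(x,t)\subset\subset\Omega_T$ whenever $(x,t)\in K$ and $\rho\in(0,d]$. For such $(x,t)$ and every $r\in(0,\min\{d,R\}]$, the definition \eqref{riesz2} together with \eqref{asp3} yields
\begin{equation*}
\mathbf{I}_1^{|\mu|}(x,t,r)=\int_0^r\frac{|\mu|(Q_\rho(x,t))}{\rho^{n+1}}\,\frac{d\rho}{\rho}\leq c_D\int_0^r h(\rho)\,\frac{d\rho}{\rho}.
\end{equation*}
The right-hand side depends on $r$ only and, thanks to the Dini condition \eqref{asp4} combined with monotone convergence, tends to $0$ as $r\to 0^+$. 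Hence $(x,t)\mapsto\mathbf{I}_1^{|\mu|}(x,t,r)$ converges to $0$ uniformly on $K$ as $r\to 0$.

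Since $K\subset\subset\Omega_T$ was arbitrary, the convergence is locally uniform in $\Omega_T$, which is precisely condition \eqref{asp1}. Applying Theorem \ref{thm:cty} then delivers the continuity of $Du$ in $\Omega_T$ immediately. No genuine obstacle is expected, which is why the authors describe this as an immediate corollary; the argument is essentially a pointwise comparison of integrands against the Dini tail of $h$ followed by an invocation of the previous theorem.
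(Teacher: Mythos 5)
Your proposal is correct and follows exactly the route the paper takes: the paper's proof is the single sentence that \eqref{asp3}--\eqref{asp4} directly imply \eqref{asp1}, after which Theorem \ref{thm:cty} applies. Your write-up simply supplies the (straightforward) verification — bounding the Riesz potential by $c_D\int_0^r h(\rho)\,d\rho/\rho$ uniformly over a compact subset and invoking the Dini condition — which the paper leaves implicit.
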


We also establish the following measure density criterion to ensure gradient H\"older continuity, which is a parabolic generalization of Lieberman's result in \cite{MR1233190}. Recall that in parabolic setting, for any $\beta\in(0,1)$ and any set $\mathcal{C}\subset \R^{n+1}$, the H\"older space $C^{0,\beta}(\mathcal{C})$ is the collection of measurable functions $f$ such that
\begin{equation}\label{holdernorm}
\|f\|_{C^{0,\beta}(\mathcal{C})}:=\sup_{\mathcal{C}}|f|+\sup_{\substack{(x_1,t_1),(x_2,t_2)\in \mathcal{C} \\ (x_1,t_1)\neq (x_2,t_2)}}\frac{|f(x_1,t_1)-f(x_2,t_2)|}{|(x_1,t_1)-(x_2,t_2)|_{\text{par}}^\beta}<\infty,
\end{equation}
where
$$
|(x_1,t_1)-(x_2,t_2)|_{\text{par}}:=\max\{|x_1-x_2|,\sqrt{|t_1-t_2|}\}
$$
is the parabolic distance between those two points. Moreover, $C^{0,\beta}_{\text{loc}}(\mathcal{C})$ is defined as the collection of measurable functions $f$ such that $f\in C^{0,\beta}(\mathcal{K})$, for every compact set $\mathcal{K}\subset\subset \mathcal{C}$.
\begin{theorem}
[Gradient H\"older continuity via Riesz potential]\label{thm1.8}
Let $u$ be a solution to \eqref{eq:u} with $p\in(p^*(n),2-\frac{1}{n+1}]$, where $p^*(n)$ is defined in \eqref{pstar}. Assume that \eqref{ineq:growth}--\eqref{dini} are satisfied and that $\omega$, $\mu$ satisfies
\begin{equation}\label{asp5}
\omega(r)\leq c_D r^{\delta} \quad \text{and} \quad |\mu|(Q_\rho(x,t))\leq c_D\rho^{n+1+\delta}
\end{equation}
for every $r\in(0,1)$ and every standard parabolic cylinder $Q_{\rho}(x,t)=B_{\rho}(x)\times (t_0-\rho^2, t_0)\subset\subset\Omega_T$, where $c_D\geq 1$ and $\delta\in(0,1)$. Then there exists an exponent $\beta\in(0,1)$ depending only on $n$, $p$, $\nu$, $L$, $c_D$, and $\delta$, such that $Du\in C^{0,\beta}_{\text{loc}}(\Omega_T)$.
\end{theorem}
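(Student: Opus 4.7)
The plan is to promote the qualitative gradient continuity of Theorem \ref{thm:cty} to a quantitative H\"older estimate by tracking decay rates in an excess-decay iteration. Under \eqref{asp5},
$$
\mathbf{I}_1^{|\mu|}(x_0,t_0,r) \leq c_D \int_0^r \rho^{\delta-1}\,d\rho = \frac{c_D}{\delta}\,r^\delta,
$$
so Theorem \ref{thm:int}, combined with a standard iteration absorbing the $L^q$ self-term, yields a local $L^\infty$ bound on $Du$; denote by $M$ this bound on a fixed compact subdomain $\mathcal{K}\subset\subset\Omega_T$.

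With $M$ in hand, I fix $(x_0,t_0)\in\mathcal{K}$ and work in intrinsic cylinders $Q_\rho^\lambda(x_0,t_0)$ with $\lambda\simeq M+1$, at which scale the equation behaves as a non-degenerate parabolic one. On each such cylinder I compare $u$ with the solution $v$ of the frozen, homogeneous equation $v_t-\text{div}\,a(x_0,t,Dv)=0$ sharing the parabolic boundary values of $u$. The DiBenedetto--Lieberman theory furnishes a Campanato-type decay
$$
\fint_{Q_{\sigma\rho}^\lambda} \big|Dv-(Dv)_{Q_{\sigma\rho}^\lambda}\big|\,dxdt \leq c\,\sigma^\alpha \fint_{Q_{\rho}^\lambda} \big|Dv-(Dv)_{Q_{\rho}^\lambda}\big|\,dxdt
$$
for some universal $\alpha=\alpha(n,p,\nu,L)\in(0,1)$. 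The deviation $|Du-Dv|$ is controlled by $|\mu|$ and $\omega$ via the comparison lemmas underlying the proofs of Theorems \ref{thm:r1}--\ref{thm:cty}; under \eqref{asp5} both of these contributions decay like a positive power of $\rho$. Combining these ingredients and choosing $\sigma$ sufficiently small yields a self-improving inequality of the form
$$
\fint_{Q_{\sigma\rho}^\lambda} \big|Du-(Du)_{Q_{\sigma\rho}^\lambda}\big|\,dxdt \leq \kappa \fint_{Q_{\rho}^\lambda} \big|Du-(Du)_{Q_{\rho}^\lambda}\big|\,dxdt + C\rho^\gamma
$$
with $\kappa\in(0,1)$ and $\gamma>0$ depending only on $n,p,\nu,L,c_D,\delta$. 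Iterating over a geometric sequence of radii and converting from intrinsic to standard parabolic cylinders (which costs no new exponent since $\lambda\simeq M+1$ is fixed) gives geometric decay of the excess of $Du$ in standard parabolic cylinders; a Campanato characterization in the parabolic metric from \eqref{holdernorm} then yields $Du\in C^{0,\beta}_{\text{loc}}(\Omega_T)$ for some $\beta=\beta(n,p,\nu,L,c_D,\delta)\in(0,1)$.

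The main obstacle I expect is the comparison step in the singular range $p\in(p^*(n),2-\frac{1}{n+1}]$: the monotonicity inequalities for $a$ weaken as $p\to p^*(n)$, so one has to exploit the intrinsic scaling (which forces $|Du|\simeq\lambda$) to tame the factor $(s^2+|Du|^2)^{(p-2)/2}$ and make the comparison genuinely perturbative. A secondary issue is the alignment of exponents: the Campanato decay of $Dv$ is proven with respect to intrinsic cylinders whose shape changes with $\lambda$, while \eqref{holdernorm} is formulated in the standard parabolic metric, so one must set up the iteration so that the intrinsic radii shrink consistently and the auxiliary exponents combine into a single explicit $\beta$.
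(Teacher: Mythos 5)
Your overall strategy -- comparison with the frozen homogeneous problem, an interior decay estimate for the comparison solution, an excess-decay iteration, and a parabolic Campanato characterization -- is indeed the skeleton of the paper's argument (which routes through Proposition~\ref{lem:cty} and then extracts the power rate $\phi_q(Du,Q^{\lambda_M}_\rho)<c\rho^\beta$ under \eqref{asp5}). However, two ingredients that you gloss over are precisely where the singular range $p\in(p^*(n),2-\tfrac{1}{n+1}]$ bites, and without them the iteration does not close.

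\emph{First}, your excess inequalities are written in $L^1$, but the comparison between $u$ and the comparison solution is only available in $L^q$ with $q<1$ in this range. Lemma~\ref{lem:u-w} (drawn from Park--Shin) controls $|Du-Dw|$ in $L^q$ only for $q<p-\tfrac{n}{n+1}\leq 1$, and the threshold $p-\tfrac{n}{n+1}\leq 1$ is exactly what distinguishes the present range from that of Kuusi--Mingione. Consequently the quantity you iterate must be the $L^q$-mean oscillation $\phi_q$ with $q\in(0,1)$ (which is only a quasi-norm; the paper develops the needed algebra in Section~\ref{sec2.2}), and the decay estimate for $Dv$ must also be formulated in $\phi_q$, as in Theorems~\ref{thm:iter1}--\ref{thm:iter2}. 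Stated with $L^1$ averages, the step ``combining these ingredients yields a self-improving inequality'' cannot be justified, because no $L^1$ bound on $|Du-Dv|$ is available. \emph{Second}, the Campanato-type decay for $Dv$ in this singular setting is not unconditional: freezing $\lambda\simeq M$ controls $|Dv|$ from above, but in the singular case the linearized coefficients blow up as $|Dv|\to0$, so a \emph{lower} bound of the form $\lambda\lesssim \sup\|Dv\|$ (as in the hypotheses of Theorems~\ref{thm:iter1}--\ref{thm:iter2}) is needed. Your ``intrinsic scaling forces $|Du|\simeq\lambda$'' is not automatic. The paper resolves this by a dichotomy inside Proposition~\ref{lem:cty}: either $\big(\fint_{Q_i}|Du|^q\big)^{1/q}<\tfrac{\epsilon}{10}\lambda_M$, in which case $\phi_q(Du,Q_i)$ is trivially small, or else the lower bound on $\|Dv\|$ holds (via Lemma~\ref{lem:supinf}) and the decay theorem applies. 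This dichotomy, together with careful tracking of how the scaling factor $\delta_\gamma$ in Theorem~\ref{thm:iter2} depends on $A,B,\gamma$ so that it collapses to a fixed geometric ratio once $\lambda=\lambda_M$ is frozen, is what makes the iteration run. You should add both ingredients before the excess-decay iteration and Campanato step can be carried out.
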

Let us give a brief description of the proofs.
We first prove decay estimates of the $L^q$-mean oscillation of the gradient for a solution to
the homogeneous equation with
$x$-independent nonlinearity
\begin{equation}\label{eq:v111}
v_t-\div (a(x_0,t, Dv))=0,
\end{equation}
where $q\in (0,1)$ and $x_0\in \R^n$ is fixed. Here by the $L^q$-mean oscillation of $Dv$ in a domain $\mathcal{C}\subset \R^{n+1}$, we mean
$$
\inf_{\Theta\in \R^n} \Big(\fint_{\mathcal{C}} |Dv-\Theta|^q\, dxdt\Big)^{1/q}.
$$
Our proof of the decay estimates adapts the singular iteration scheme in \cite[Section 3]{kuusi2013mingione} to the $L^q$ setting for $q\in(0,1)$. For the precise definition of the $L^q$-mean oscillation with $q\in(0,1)$ and some of its properties, see Section \ref{sec2.2}. We also refer the reader to \cite{luis2003estimates,choi2019gradient,dong2012gradient,dong2017c1,dong2020on,dong2021gradient,krylov2010on} for its applications in other problems.

Our proofs of the pointwise gradients estimates and the gradient continuity results are all based on the decay estimates for $Dv$ mentioned above and comparison estimates
between the original solution $u$ to \eqref{eq:u} and a solution $v$ to \eqref{eq:v111}.
As a bridge between $u$ and $v$, we introduce the solution $w$ to the homogeneous equation
$$
w_t-\div(a(x,t,Dw))=0
$$
in a cylinder $Q$ with the boundary condition $w=u$ on $\partial_{\text{par}} Q$. Under appropriate boundary condition on $v$, we obtain an $L^p$ bound for $Dw-Dv$, which originated from \cite[Lemma 4.3]{kuusi2012new}. We also utilize a comparison estimate between $u$ and $w$ in \cite[lemma 3.1]{park2020regularity}, which provides an $L^q$ bound for $Du-Dw$ in terms of $\mu$ for some $q\in(0,1)$. By proving a reverse H\"older type inequality for $Dw$, we establish an $L^q$ estimate for $Du-Dv$ for some $q\in(0,1)$.

With the decay estimates of the $L^q$-mean oscillation for $Dv$ and $L^q$ estimate for $Du-Dv$ in hand, we then borrow the idea in \cite{dong2017c1} by estimating the $L^{q}$-mean oscillation and adapt the exit time argument and iteration argument used, for instance, in \cite[Theorem 1.1]{kuusi2013mingione} to prove the pointwise gradient estimates. For gradient continuity results, we first prove a uniform decay estimate of the $L^q$-mean oscillation of $Du$ in Proposition \ref{lem:cty}. 
Then for the gradient H\"older continuity result, we show the decay rate of the $L^q$-mean oscillation of $Du$ and adapt Campanato's idea of characterizing H\"older continuity to the $L^q$ setting for some $q\in(0,1)$. Finally, for the gradient continuity result, we adapt the ``maximal iteration chain'' argument introduced in \cite[Theorem 1.5]{kuusi2013mingione} to our $L^q$ setting and apply the uniform decay of the $L^q$-mean oscillation of $Du$.

The rest of the paper is organized as follows. In the next section, we
collect basic notation and give the definition and some basic properties of the $L^q$-mean oscillation for $q\in(0,1)$. In Section \ref{sec3}, we
prove some decay estimates for the $L^{q}$-mean oscillation of the gradients of solutions to the homogeneous equation with $x$-independent nonlinearities. In Section \ref{sec4}, we derive some comparison estimates and give the proofs of Theorem \ref{thm:r1}--Corollary \ref{cor:1}. Finally, Section \ref{sec5} is devoted to the gradient continuity results
Theorem \ref{thm:cty}--Theorem \ref{thm1.8}.

\section{Notation and basic inequalities}
\subsection{Notation}\label{sec2.1}
In this paper, we adapt the same notation as in \cite{kuusi2013mingione} for comparison purposes. For completeness, we briefly record the notation that will be used throughout this paper.
For any vector $y=(y_1,\ldots,y_n)\in \R^n$, we define two different norms
$$
|y|:= \big(\sum_{i=1}^n y_i^2\big)^{1/2} \quad \text{and} \quad \|y\|:=\max_{1\leq i\leq n} |y_i|.
$$
These two norms are equivalent since
$$
\|y\| \leq |y| \leq \sqrt{n} \|y\|, \quad \forall \,y\in \R^n.
$$
We use $$B_r(x_0):=\{x\in\R^n: \,|x-x_0|<r\}$$ to denote the open Euclidean ball in $\R^n$ with center $x_0$ and radius $r$
and denote
$$
Q_r(x_0,t_0):=B_r(x_0)\times (t_0-r^2, t_0)
$$
as the standard parabolic cylinder with center $(x_0,t_0)$ and  radius $r$. For $\lambda>0$, we define the intrinsic cylinders as
$$ Q^\lambda_r(x_0,t_0):=B_r(x_0)\times (t_0-\lambda^{2-p} r^2, t_0).$$
Clearly, when $\lambda=1$, an intrinsic cylinder becomes a standard parabolic cylinder, namely, $Q^1_r(x_0,t_0)=Q_r(x_0,t_0)$.
For simplicity, we also denote
$$\delta Q_r^\lambda(x_0,t_0):=Q^\lambda_{\delta  r}(x_0,t_0)=B_{\delta r}(x_0)\times (t_0,\lambda^{2-p}\delta^2r^2,t_0).$$ Namely, the parameter $\delta>0$ before an intrinsic cylinder $Q_r^\lambda(x_0,r_0)$ should be viewed as a dilation factor of the radius $r$. We often denote $r_\lambda:=\lambda^{(p-2)/2} r$ and therefore
$$
Q_{r_\lambda}^\lambda(x_0,t_0)=Q^{\lambda}_{\lambda^{(p-2)/2} r}(x_0,t_0)=B_{\lambda^{(p-2)/2}r}(x_0)\times (t_0-r^2,t_0).
$$
A useful property is that when $p\in(1,2)$,
$$Q_{r_{\lambda_2}}^{\lambda_2}(x_0,t_0)\subset Q_{r_{\lambda_1}}^{\lambda_1}(x_0,t_0), \quad \text{if} \quad 0<\lambda_1\leq \lambda_2.$$
When there is no confusion and no need to specify the center, we also denote $Q_r^\lambda:=Q_r^\lambda(x_0,t_0)$.
For any cylindrical domain $\mathcal{C}=D\times (t_1,t_2)$ with $D\subset \R^n$, the parabolic boundary is defined as
$$\partial_{\text{par}} \mathcal{C}:=D\times \{t_1\} \cup \partial D\times [t_1,t_2).$$
The parabolic distance between two points is defined as $$|(x_1,t_1)-(x_2,t_2)|_{\text{par}}:=\max\{|x_1-x_2|,\sqrt{|t_1-t_2|}\}$$
and the corresponding parabolic distance between two sets is defined as
$$ \text{dist}_{\text{par}}(\mathcal{A}_1,\mathcal{A}_2)=\inf \{|(x_1,t_1)-(x_2,t_2)|_{\text{par}}:\, (x_1,t_1)\in \mathcal{A}_1,\, (x_2,t_2)\in \mathcal{A}_2\}.$$
Next, for any measurable mapping $g:\mathcal{A} \subset \R^{n+1} \to \R^n$, we denote its integral average as
$$
\fint_\mathcal{A} g \,dxdt:=\frac{1}{|\mathcal{A}|} \int_\mathcal{A} g(x,t) \,dxdt
$$
and we denote its oscillation as
$$
\osc\limits_\mathcal{A} \,g:= \sup_{(x,t),\,(x_0,t_0)\in \mathcal{A}} |g(x,t)-g(x_0,t_0)|.
$$
Finally, throughout the paper, we denote by $c, \,c',\,c''$ some general constants which may differ from line to line. We also use $c_1,\,c_2, \, c_3,\,\ldots$ to denote specific constants which may be used later.

\subsection{Definition of the \texorpdfstring{$L^q$}{Lq}-mean oscillation and some basic inequalities}\label{sec2.2}
For $q\in(0,1)$, we first recall some basic inequalities in one dimension:
\begin{equation}\label{eq:1d}
    (a+b)^q\leq a^q+b^q,\quad (a+b)^{1/q}\leq 2^{1/q-1} (a^{1/q} +b^{1/q}) \quad \forall \,a,b>0.
\end{equation}
Therefore, for any $\Theta_1, \,\Theta_2\in \R^k$, where $k$ is a positive integer, we have
\begin{equation}\label{eq:kd}
    |\Theta_1+\Theta_2|^q\leq (|\Theta_1|+|\Theta_2|)^q\leq |\Theta_1|^q+|\Theta_2|^q.
\end{equation}
We now consider a measurable function $F:\,\R^{n+1} \to \R^k$ for some integer $k\geq 1$. For simplicity, we assume that $F\in L^1_{\text{loc}}(\R^{n+1}:\R^k)$. For any $q\in (0,1)$ and any bounded domain $\mathcal{C}\subset \R^{n+1}$, we define the $L^q$-mean oscillation of $F$ on $\mathcal{C}$ as
$$
\phi_q(F,\mathcal{C}):= \inf_{\Theta\in \R^k} \Big(\fint_{\mathcal{C}} |F(x,t)-\Theta|^q \,dxdt\Big)^{1/q}.
$$
By \eqref{eq:kd} and the fact that $F\in L^1(\mathcal{C})$, we know that the function
$$ h(\Theta):=\fint_{\mathcal{C}} |F(x,t)-\Theta|^q \,dxdt$$
is a continuous function of $\Theta\in \R^k$ satisfying
$\lim_{|\Theta|\to \infty} h(\Theta)=\infty.$ Therefore the minimum of $h(\cdot)$ can be attained in $\R^k$ and we choose $\mathbf{m}(F,\mathcal{C})\in \R^k$ such that
$$
\Big(\fint_{\mathcal{C}} |F(x,t)-\mathbf{m}(F,\mathcal{C})|^q \,dxdt\Big)^{1/q}=\phi_q(F,\mathcal{C}).
$$
Next, we prove some useful properties related to the $L^q$-mean oscillation.
By \eqref{eq:kd}, we have
$$ |\mathbf{m}(F,\mathcal{C})|^q\leq |F(x,t)-\mathbf{m}(F,\mathcal{C})|^q+|F(x,t)|^q.$$
By taking the average over $(x,t)\in \mathcal{C}$, taking the $q$-th root, and using \eqref{eq:1d}, we obtain
\begin{equation}\label{eq:mbound}
|\mathbf{m}(F,\mathcal{C})|\leq 2^{1/q-1}\phi_q(F,\mathcal{C})+2^{1/q-1}\Big(\fint_{\mathcal{C}}F^q\,dxdt\Big)^{1/q}\leq 2^{1/q} \Big(\fint_{\mathcal{C}}F^q\,dxdt\Big)^{1/q}.
\end{equation}
For two bounded domains $\mathcal{C}_1\subset\mathcal{C}_2\subset \R^{n+1}$,
using the same argument as above, we also have
\begin{equation}\label{eq:mdiff}
\begin{aligned}
|\mathbf{m}(F,\mathcal{C}_1)-\mathbf{m}(F,\mathcal{C}_2)|&\leq 2^{1/q-1} \phi_q(F,\mathcal{C}_1)+2^{1/q-1} \Big(\fint_{\mathcal{C}_1} |F-\mathbf{m}(F,\mathcal{C}_2)|^q \,dxdt\Big)^{1/q}\\
&\leq 2^{1/q-1} \phi_q(F,\mathcal{C}_1)+2^{1/q-1} \Big(\frac{|\mathcal{C}_2|}{|\mathcal{C}_1|}\Big)^{1/q}
\phi_q(F,\mathcal{C}_2).
\end{aligned}
\end{equation}

\section{Gradient estimates for homogeneous equations}\label{sec3}

In this section, we derive decay estimates of the $L^q$-mean oscillation of gradients of solutions to the homogeneous equations of the type
\begin{equation}
                        \label{eq1.01}
v_t-\div (a_0(t,Dv))=0
\end{equation}
in a given cylinder $Q=B\times (t_1,t_2)$, where $a_0=a_0(\tau, \xi)$ is a vector field independent of $x$ satisfying conditions \eqref{ineq:growth} and \eqref{ineq:elliptic} for some $s\geq0$, $L\geq \nu>0$, and $p\in(1,2]$.
First, we recall an oscillation estimate given in \cite[Theorem 3.2]{kuusi2013mingione}.
\begin{theorem}\label{thm:osc}
Suppose that $v$ is a solution to \eqref{eq1.01} in a given cylinder $Q$ under assumptions \eqref{ineq:growth} and \eqref{ineq:elliptic}. If $p\in(1,2]$, $\lambda>0$, and
$$s+\sup_{Q_r^\lambda}\|Dv\|\leq A\lambda$$
holds for a constant $A\geq 1$ and an intrinsic cylinder $Q_r^\lambda\subset Q$, then there exists a constant $\alpha\in(0,1)$ depending only on $n$, $p$, $\nu$, $L$, $A$, such that
$$|Dv(x_1,t_1)-Dv(x_2,t_2)|\leq 4\sqrt{n} A \lambda \Big(\frac{\rho}{r}\Big)^\alpha$$
holds for any $(x_1,t_1),\, (x_2,t_2)\in Q_\rho^\lambda$, where $Q_\rho^\lambda \subset Q_r^\lambda$ is another intrinsic cylinder with the same center.
Moreover, if $p\in(\frac{2n}{n+2},2]$, then $Dv$ is H\"older continuous in $Q$.
 \end{theorem}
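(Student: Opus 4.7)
The plan is to reduce Theorem~\ref{thm:osc} to a classical $C^{1,\alpha}$ regularity result for homogeneous singular parabolic equations via an intrinsic rescaling that normalizes both the gradient bound and the anisotropic parabolic scaling. Since $a_0$ is independent of $x$, no frozen-coefficient argument is needed, and the question reduces entirely to regularity of the rescaled solution on a unit cylinder.

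Concretely, letting $(x_0,t_0)$ denote the center of $Q_r^\lambda$, I would set
$$
w(y,\tau):=\frac{v(x_0+ry,\,t_0+\lambda^{2-p}r^2\tau)}{\lambda r},\qquad (y,\tau)\in B_1\times(-1,0),
$$
so that $Dw(y,\tau)=\lambda^{-1}Dv(x_0+ry,\,t_0+\lambda^{2-p}r^2\tau)$ and $w$ solves $w_\tau-\operatorname{div}\tilde a(\tau,Dw)=0$ in $B_1\times(-1,0)$ with
$$
\tilde a(\tau,\xi):=\lambda^{1-p}a_0\bigl(t_0+\lambda^{2-p}r^2\tau,\,\lambda\xi\bigr).
$$
A direct computation shows that $\tilde a$ satisfies \eqref{ineq:growth}--\eqref{ineq:elliptic} with the same constants $\nu,L,p$ and with $s/\lambda$ in place of $s$. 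The hypothesis then reads $s/\lambda+\sup_{B_1\times(-1,0)}\|Dw\|\le A$. At this point DiBenedetto's intrinsic $C^{1,\alpha}$ theory for homogeneous parabolic $p$-Laplace type equations applies on the unit cylinder: the $L^\infty$ control on $Dw$ turns the singularity of the principal part into a uniformly parabolic situation (up to constants depending on $A$) and produces $\alpha=\alpha(n,p,\nu,L,A)\in(0,1)$ together with a H\"older seminorm bound of the form
$$
|Dw(y_1,\tau_1)-Dw(y_2,\tau_2)|\le c\,A\,\max\{|y_1-y_2|,\sqrt{|\tau_1-\tau_2|}\}^\alpha
$$
on concentric subcylinders. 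By possibly decreasing $\alpha$, the multiplicative constant can be absorbed into the power, leaving a bound of the shape $4\sqrt{n}\,A\,\sigma^\alpha$ for $(y_i,\tau_i)\in B_\sigma\times(-\sigma^2,0)$; the factor $\sqrt n$ accounts for converting between the $\|\cdot\|$-norm in which the supremum is taken and the Euclidean $|\cdot|$-norm used on the left-hand side. Setting $\sigma=\rho/r$ and undoing the rescaling via $Dv=\lambda Dw$ yields the stated estimate on $Q_\rho^\lambda$.

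For the \emph{moreover} statement, when $p>2n/(n+2)$ the classical local $L^\infty$-bounds for the gradient of solutions to the homogeneous singular equation (DiBenedetto--Friedman) apply on any interior subcylinder; combined with a covering argument by intrinsic cylinders on which the hypothesis of the first part is met for a suitable $\lambda$, they deliver the local H\"older continuity of $Dv$. The main obstacle is the second step: one must invoke a version of DiBenedetto's intrinsic $C^{1,\alpha}$ theorem formulated for general vector fields satisfying \eqref{ineq:growth}--\eqref{ineq:elliptic}, not just the pure $p$-Laplacian, with explicit dependence of $\alpha$ on the gradient bound $A$. Verifying that $\tilde a$ falls into the scope of that theorem and carefully tracking how $\lambda$ enters through the rescaling is routine but must be done so that $\alpha$ and the final constants depend only on $n,p,\nu,L,A$, and not on $r,\lambda$ separately.
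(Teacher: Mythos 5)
The paper does not prove Theorem~\ref{thm:osc}; it cites it verbatim from \cite[Theorem~3.2]{kuusi2013mingione}, where the proof proceeds through the ``singular iteration'' machinery (the analogues of Propositions~\ref{height}, \ref{prop:deg}, \ref{prop:iter1}, \ref{prop:iter2} in the present paper): one alternates between a nonsingular regime where $|Dv|$ stays comparable to $\lambda$ and a linearization argument applies, and a singular regime where the sup of $\|Dv\|$ drops geometrically, and one shows the chain terminates in finitely many steps.

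Your intrinsic rescaling is correct, and your bookkeeping of the constants for $\tilde a$ is fine. However, there is a genuine gap in the justification you offer for invoking a $C^{1,\alpha}$ estimate on the rescaled problem. You assert that ``the $L^\infty$ control on $Dw$ turns the singularity of the principal part into a uniformly parabolic situation (up to constants depending on $A$)''. For $p\in(1,2)$ this is the wrong direction. The upper bound on the linearized coefficients scales like $L\,(s^2+|Dw|^2)^{(p-2)/2}$ with $(p-2)/2<0$, so it \emph{blows up} when $|Dw|$ and $s$ are both small; a bound $\|Dw\|\le A$ controls the ellipticity from below but does nothing to control the modulus of the coefficients from above. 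This is exactly the difficulty that the singular-iteration proof in \cite{kuusi2013mingione} (and Propositions~\ref{height}--\ref{prop:deg} here) is designed to circumvent: one needs either a positive lower bound on $|Dv|$ (nonsingular alternative, giving uniform parabolicity) or a quantitative decay of $\sup\|Dv\|$ (singular alternative), and one has to argue that the iteration of these alternatives stops. Merely invoking an $L^\infty$ bound does not reduce the problem to a uniformly parabolic one.

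Two further points deserve care. First, the version of DiBenedetto's $C^{1,\alpha}$ theory you cite must be one valid for general vector fields satisfying \eqref{ineq:growth}--\eqref{ineq:elliptic} (not just the $p$-Laplacian), with a H\"older seminorm bound that is \emph{homogeneous of degree one} in $\sup\|Dw\|$ and depends on the data only through $n,p,\nu,L,A$; you gesture at this but do not pin down a statement with the right quantitative form, and this is precisely what is nontrivial. Second, the step ``by possibly decreasing $\alpha$, the multiplicative constant can be absorbed into the power'' does hold here, but only because one can combine the $C^{1,\alpha}$ bound with the trivial oscillation bound $|Dw(y_1,\tau_1)-Dw(y_2,\tau_2)|\le 2\sqrt n\, A$ via a geometric interpolation; it is worth stating this explicitly since it is not a pure rescaling trick. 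As written, your proposal reduces the theorem to a cited black box whose applicability is justified by an incorrect heuristic, which is the most important missing ingredient.
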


The main goal of this section is to derive the following decay estimate of the $L^q$-mean oscillation of the gradient with $q\in(0,1)$.

\begin{theorem}\label{thm:iter1}
Let $p\in(1,2]$. Suppose that $v$ is a solution to \eqref{eq1.01} in an intrinsic cylinder $Q_r^\lambda$ under assumptions \eqref{ineq:growth} and \eqref{ineq:elliptic} and that $A$, $B$, $q$, and $\gamma$ are constants satisfying
$A,\,B\geq 1$ and $q,\,\gamma \in(0,1)$. Then there exist constants $\delta_{\gamma}\in(0,1/2)$  depending on $n$, $p$, $\nu$, $L$, $A$, $B$, $\gamma$, $q$, and $\xi\in(0,1/4)$ depending only on $n$, $p$, $\nu$, $L$, $A$, $B$, $\gamma$, such that if
\begin{equation}\label{lower1}
\lambda\leq \max\Big\{\frac{s}{\xi }, B\sup_{\delta_\gamma Q_{r}^\lambda}\|Dv\|\Big\}, \quad s+\sup_{Q_r^\lambda}\|Dv\|\leq A\lambda,
\end{equation}
where $\lambda>0$ is a constant, then
\begin{equation}\label{eq3.102}
    \phi_q(Dv,\delta_\gamma Q_r^\lambda)\leq \gamma \phi_q(Dv,Q_r^\lambda).
\end{equation}
Moreover, there exist constants $\alpha\in(0,1)$ depending only on $n$, $p$, $\nu$, $L$, $A$, $B$, $\gamma$, but not on $q$, and $c(A,B)\geq 1$ depending on $n$, $p$, $\nu$, $L$, $A$, $B$, $\gamma$, $q$, such that
$$\delta_\gamma=\frac{\gamma^{1/\alpha}}{c(A,B)}.$$
Also, the estimate \eqref{eq3.102} still holds when replacing $\delta_\gamma$ with a smaller number.
\end{theorem}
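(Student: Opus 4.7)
The plan is to combine the H\"older estimate of Theorem \ref{thm:osc} with a case analysis on \eqref{lower1} to reduce matters to a non-degenerate regime (where the equation becomes uniformly parabolic after time rescaling by $\lambda^{p-2}$), and then to invoke a Campanato-type decay of the $L^q$-mean oscillation for the resulting linear parabolic equation. First I apply Theorem \ref{thm:osc}: since $s+\sup_{Q_r^\lambda}\|Dv\|\le A\lambda$, one has $\osc_{Q_\rho^\lambda}Dv\le 4\sqrt{n}A\lambda(\rho/r)^\alpha$ on every concentric sub-cylinder, with $\alpha\in(0,1)$ depending only on $n,p,\nu,L,A$, in particular independent of $q$---this is precisely what will yield the form $\delta_\gamma=\gamma^{1/\alpha}/c(A,B)$ claimed at the end of the statement. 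I then exploit \eqref{lower1} to establish non-degeneracy, namely $(s^2+|Dv|^2)^{1/2}\sim\lambda$, on a cylinder $\widetilde Q$ sitting between $\delta_\gamma Q_r^\lambda$ and $Q_r^\lambda$. In the first alternative $s\ge\xi\lambda$ I take $\widetilde Q=Q_r^\lambda$ directly; this is where the threshold $\xi$ enters. In the second alternative $B\sup_{\delta_\gamma Q_r^\lambda}\|Dv\|\ge\lambda$, I pick $(x_*,t_*)\in\overline{\delta_\gamma Q_r^\lambda}$ with $\|Dv(x_*,t_*)\|\ge\lambda/(2B)$ and propagate this lower bound via the H\"older estimate to $\|Dv\|\ge\lambda/(4B)$ on a concentric $\widetilde Q=Q_{r_0}^\lambda$ whose ratio $r_0/r$ depends only on $n,p,\nu,L,A,B$; I then impose $\delta_\gamma\le r_0/(2r)$ so that $\delta_\gamma Q_r^\lambda\subset\widetilde Q$.

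On $\widetilde Q$, each component $V_i=D_iv$ formally solves the linear parabolic equation $\partial_tV_i-D_j(A_{jk}D_kV_i)=0$ with $A_{jk}=D_{\xi_k}a_0^j(t,Dv)$, and by \eqref{ineq:growth}--\eqref{ineq:elliptic} these coefficients are, after the rescaling $t\mapsto\lambda^{p-2}t$, bounded and uniformly elliptic with ratio depending only on $L/\nu$. Applying De Giorgi--Nash--Moser H\"older regularity together with an $L^\infty$--$L^q$ interior bound for $V_i-\mathbf{m}_i$ (valid for $q\in(0,1)$ via a standard Moser iteration and covering argument) yields the Campanato-type decay
\[
\phi_q(Dv,Q_\sigma^\lambda)\le C\,(\sigma/\sigma')^\alpha\,\phi_q(Dv,Q_{\sigma'}^\lambda)
\]
for every concentric pair $Q_\sigma^\lambda\subset Q_{\sigma'}^\lambda\subset\widetilde Q$, with the \emph{same} $\alpha$ as above and $C$ depending on $n,p,\nu,L,A,B,q$. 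In the first alternative, taking $\sigma=\delta_\gamma r$ and $\sigma'=r$ yields the conclusion as soon as $C\delta_\gamma^\alpha\le\gamma$. In the second, I take $\sigma=\delta_\gamma r$, $\sigma'=r_0$ and combine with the elementary bound $\phi_q(Dv,Q_{r_0}^\lambda)\le(r/r_0)^{(n+2)/q}\phi_q(Dv,Q_r^\lambda)$ obtained by inserting the global minimizer $\mathbf{m}(Dv,Q_r^\lambda)$ as a competitor and comparing averages; since $r_0/r$ is fixed, the extra factor is absorbed into the $q$-dependent prefactor, and $\delta_\gamma=\gamma^{1/\alpha}/c(A,B)$ closes both alternatives.

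The main obstacle I expect is the Campanato decay itself in the sub-unit range $q\in(0,1)$: the required $L^\infty$--$L^q$ bound for $V_i-\mathbf{m}_i$ lies below the natural integrability of the classical $L^2$-based linear parabolic theory, so I must carry out a Moser-type iteration that still closes for such exponents while preserving the exponent $\alpha$ furnished by Theorem \ref{thm:osc}---keeping $\alpha$ independent of $q$ is exactly what produces the clean form $\delta_\gamma=\gamma^{1/\alpha}/c(A,B)$. A secondary subtlety in the second alternative is that Theorem \ref{thm:osc} is stated only for sub-cylinders concentric with $Q_r^\lambda$, so a triangle-inequality argument is needed to absorb the displacement of $(x_*,t_*)$ from the center into the H\"older remainder when propagating non-degeneracy from a single point to all of $\widetilde Q$.
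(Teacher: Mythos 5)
Your argument is essentially correct and takes a genuinely different route from the paper. The paper's proof re-runs the Kuusi--Mingione singular iteration of \cite[Section 3]{kuusi2013mingione} in the $L^q$ setting: it builds the chain $Q_{R_i}^{\lambda_i}$ with $\lambda_{i+1}=\eta_1\lambda_i$, $R_{i+1}=\sigma_1 R_i$, uses the measure-theoretic dichotomy of Propositions \ref{height}--\ref{prop:deg} at each step, and shows the chain terminates within $m$ steps (with $m\sim\log(AB)$) because of the lower bound in \eqref{lower1}, after which Proposition \ref{prop:iter1} (or, for $s\geq\xi\lambda$, Proposition \ref{prop:iter2}) gives the decay. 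You instead treat Theorem \ref{thm:osc}---the $L^\infty$-oscillation output of that very singular iteration---as a black box and use it, together with the pointwise lower bound $\|Dv(x_*,t_*)\|\geq\lambda/(2B)$ coming from the second alternative in \eqref{lower1}, to promote non-degeneracy from one point to a whole intrinsic sub-cylinder $\widetilde Q=Q^\lambda_{r_0}$ with $r_0/r$ determined by $A$, $B$, and the exponent of Theorem \ref{thm:osc}. From there the $L^q$-Campanato decay is a direct application of the linear parabolic estimate that the paper itself records as Lemma \ref{lem:linear}, combined with the trivial enlargement $\phi_q(Dv,Q_{r_0}^\lambda)\leq (r/r_0)^{(n+2)/q}\phi_q(Dv,Q_r^\lambda)$. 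This is a cleaner separation of concerns: the singular iteration lives entirely inside Theorem \ref{thm:osc}, and the $L^q$-bookkeeping is done only once, on the non-degenerate cylinder. The price is a (likely worse) quantitative dependence on $B$ in the decay exponent, since the ellipticity ratio on $\widetilde Q$ is of order $(AB)^{2-p}$, whereas in the paper's Step 4 the non-degeneracy $|D_iv|\geq\lambda_{\tilde m}/4$ yields an exponent $\beta$ independent of $B$; this is allowed by the statement. Two small imprecisions you should clean up if you write this in full: the exponent $\alpha$ appearing in $\delta_\gamma=\gamma^{1/\alpha}/c(A,B)$ is the De Giorgi--Nash--Moser exponent of the linearized equation on $\widetilde Q$, which is not literally ``the same $\alpha$'' as the one furnished by Theorem \ref{thm:osc} (they just share the crucial feature of being $q$-independent); and when $s=0$ the differentiation of \eqref{eq1.01} to produce the linear equation for $V_i$ requires the approximation step that the paper carries out in Step 6 of its proof (replacing $s$ by $\epsilon>0$ and passing to the limit using continuity of $\phi_q$ under strong $L^p$ convergence). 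Also note that the $L^\infty$--$L^q$ interior bound you flag as the ``main obstacle'' is not really an obstacle: it is estimate \eqref{eq3.111} of Lemma \ref{lem:linear}, which follows from the standard $L^\infty$--$L^2$ local boundedness by a routine interpolation and iteration argument, as the paper indicates.
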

In the proof of gradient continuity results in Section \ref{sec5}, we need a different version of Theorem \ref{thm:iter1} under a stronger condition as follows, which gives a more precise dependence of $\delta_\gamma$ in terms of $A$, $B$, and $\gamma$, namely,
$$\delta_\gamma= \frac{\gamma^{1/\alpha_2}}{c(A)B^{1/\alpha_1}}$$
for some exponent $\alpha_1\in(0,1)$ independent of $B$ and some $\alpha_2\in(0,1)$ independent of $B$ and $q$.
\begin{theorem}\label{thm:iter2}
Let $p\in(1,2]$. Suppose that $v$ is a solution to \eqref{eq1.01} in an intrinsic cylinder $Q_r^\lambda$ under assumptions \eqref{ineq:growth} and \eqref{ineq:elliptic} and that $A$, $B$, $q$, and $\gamma$ are constants satisfying
$A,\,B\geq 1$ and $q,\,\gamma \in(0,1)$. Then there exists a constant $\delta_{\gamma}\in(0,1/2)$  depending only on $n$, $p$, $\nu$, $L$, $A$, $B$, $\gamma$, $q$, such that if
\begin{equation*}
   \lambda\leq  B\sup_{\delta_\gamma Q_{r}^\lambda}\|Dv\|, \quad s+\sup_{Q_r^\lambda}\|Dv\|\leq A\lambda,
\end{equation*}
where $\lambda>0$ is a constant, then
\begin{equation}\label{eq3.202}
    \phi_q(Dv,\delta_\gamma Q_r^\lambda)\leq \gamma \phi_q(Dv,Q_r^\lambda).
\end{equation}
Moreover, there exist constants $\alpha_1\in(0,1)$ and $c(A)\geq 1$, both depending only on $n$, $p$, $\nu$, $L$, $A$, $\gamma$, $q$, but not on $B$, and $\alpha_2\in(0,1)$ depending only on $n$, $p$, $\nu$, $L$, $A$, $\gamma$, but not on $q$ and $B$, such that
\begin{equation}\label{eq3.203}
    \delta_\gamma=  \frac{\gamma^{1/\alpha_2}}{c(A)B^{1/\alpha_1}}.
\end{equation}
Also, the estimate \eqref{eq3.202} still holds when replacing $\delta_\gamma$ by a smaller number.
\end{theorem}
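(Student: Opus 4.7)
The plan is to refine the proof of Theorem~\ref{thm:iter1} by exploiting the stronger hypothesis $\lambda\le B\sup_{\delta_\gamma Q_r^\lambda}\|Dv\|$ (eliminating the $s/\xi$ branch in the $\max$ appearing in \eqref{lower1}) and to track precisely how the threshold radius $\delta_\gamma$ depends on $B$. The target shape $\delta_\gamma=\gamma^{1/\alpha_2}/(c(A)B^{1/\alpha_1})$ should emerge by balancing a H\"older-type upper bound on $\phi_q(Dv,\delta_\gamma Q_r^\lambda)$ against a matching lower bound on $\phi_q(Dv,Q_r^\lambda)$ coming from the non-degeneracy condition.

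For the upper bound, I would invoke Theorem~\ref{thm:osc} on $Q_r^\lambda$, which produces an exponent $\alpha_2=\alpha_2(n,p,\nu,L,A)\in(0,1)$, independent of $q$, $B$, and $\gamma$, with $\osc_{\delta_\gamma Q_r^\lambda}Dv\le 4\sqrt{n}\,A\lambda\,\delta_\gamma^{\alpha_2}$. Using the constant vector $Dv(x_0,t_0)$ (where $(x_0,t_0)$ is the common center) as a test in the definition of $\phi_q$ yields
\[
\phi_q(Dv,\delta_\gamma Q_r^\lambda)\le 4\sqrt{n}\,A\lambda\,\delta_\gamma^{\alpha_2}.
\]
This exponent $\alpha_2$ is precisely the one appearing in \eqref{eq3.203}.

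For the matching lower bound, I would argue that under the non-degeneracy hypothesis there is a point $(x^*,t^*)\in\overline{\delta_\gamma Q_r^\lambda}$ with $\|Dv(x^*,t^*)\|\ge \lambda/B$. Applying an interior variant of Theorem~\ref{thm:osc} on a sub-cylinder $Q_\rho^\lambda(x^*,t^*)\subset Q_r^\lambda$ with $\rho/r\sim B^{-1/\alpha_2}$ propagates $|Dv|\ge 3\lambda/(4B)$ to all of $Q_\rho^\lambda(x^*,t^*)$. Writing $\Theta:=\mathbf{m}(Dv,Q_r^\lambda)$ and splitting on the magnitude of $\Theta$, one obtains (via an elementary pointwise comparison when $|\Theta|\le \lambda/(4B)$, and via linearization of the shifted equation for $w:=v-\Theta\cdot x$ when $|\Theta|>\lambda/(4B)$) a bound
\[
\phi_q(Dv,Q_r^\lambda)\;\ge\; c(A,q)\,\lambda/B^{1/\alpha_1'}
\]
for some $\alpha_1'\in(0,1)$ depending on $n,p,\nu,L,A,q$ but not on $B$. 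Balancing this against the upper bound and solving $4\sqrt{n}A\delta_\gamma^{\alpha_2}\le \gamma c(A,q)/B^{1/\alpha_1'}$ gives $\delta_\gamma=\gamma^{1/\alpha_2}/(c(A)B^{1/\alpha_1})$ with $\alpha_1:=\alpha_2\alpha_1'$, matching the claim in \eqref{eq3.203}.

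The main obstacle will be the lower bound in the regime where the minimizer $\Theta$ is comparable in size to $\lambda/B$: in that near-linear case $Dv$ may be almost equal to a nonzero constant, so $\phi_q(Dv,Q_r^\lambda)$ is itself small and the naive ``pointwise gap'' argument breaks. Handling this by shifting, linearizing the equation for $w:=v-\Theta\cdot x$, and checking that the resulting Campanato-type constants depend only polynomially on $B$ is what allows the clean $B^{-1/\alpha_1}$ dependence to survive. The final clause that $\delta_\gamma$ may be replaced by any smaller value is automatic, since the H\"older upper bound $4\sqrt{n}A\lambda\delta_\gamma^{\alpha_2}$ is increasing in $\delta_\gamma$ while the lower bound on $\phi_q(Dv,Q_r^\lambda)$ is independent of $\delta_\gamma$, so the target inequality \eqref{eq3.202} is preserved under further shrinking.
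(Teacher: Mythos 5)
Your proposal has a genuine gap: the proposed ``matching lower bound'' $\phi_q(Dv,Q_r^\lambda)\geq c(A,q)\lambda/B^{1/\alpha_1'}$ is simply false under the stated hypotheses. Consider an affine solution $v(x,t)=\Theta\cdot x$ with $\|\Theta\|\geq\lambda/B$ and $s+|\Theta|\leq A\lambda$: then $Dv\equiv\Theta$ satisfies the nondegeneracy condition $\lambda\leq B\sup\|Dv\|$, yet $\phi_q(Dv,Q_r^\lambda)=0$. Your own last paragraph flags the near-linear regime as the obstacle, but the proposed fix --- shift by $\Theta$ and linearize the equation for $w:=v-\Theta\cdot x$ --- cannot produce a nonzero lower bound either, because $Dw=Dv-\Theta$ is identically zero in this example, and in general can be arbitrarily small. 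The difficulty is structural: the target estimate \eqref{eq3.202} is a \emph{relative} decay inequality, vacuous when $\phi_q(Dv,Q_r^\lambda)=0$, and any strategy that balances an absolute H\"older upper bound $\phi_q(Dv,\delta_\gamma Q_r^\lambda)\lesssim A\lambda\,\delta_\gamma^{\alpha}$ against an absolute lower bound on $\phi_q(Dv,Q_r^\lambda)$ necessarily forgets that when $Dv$ is nearly constant the left-hand side must also be nearly zero, with a proportionality tied to $\phi_q(Dv,Q_r^\lambda)$ itself --- precisely the quantitative information the oscillation bound of Theorem~\ref{thm:osc} discards.

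The paper avoids this entirely and never attempts a lower bound on $\phi_q(Dv,Q_r^\lambda)$. It runs the singular iteration of Theorem~\ref{thm:iter1}: a chain of shrinking intrinsic cylinders $Q_{R_i}^{\lambda_i}$ with $\lambda_{i+1}=\eta_1\lambda_i$, where the nondegeneracy $\lambda\leq B\sup_{\delta_\gamma Q_r^\lambda}\|Dv\|$ is used only to show that the chain terminates after at most $m\sim\log(AB)/\log(1/\eta_1)$ steps (cf.\ \eqref{ineq:stopping}). At termination, either the non-singular alternative holds and Proposition~\ref{prop:iter1} applies, or the upper bound in \eqref{1sup} fails so that $s\sim A\lambda_{\tilde m}$ and Proposition~\ref{prop:iter2} applies; both yield a \emph{relative} decay $\phi_q(Dv,Q_{\delta R_{\tilde m}}^{\lambda_{\tilde m}})\leq c\,\delta^\beta\phi_q(Dv,Q_{R_{\tilde m}}^{\lambda_{\tilde m}})$, which is then carried back to $Q_r^\lambda$ by cylinder-inclusion comparisons. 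The $B^{-1/\alpha_1}$ dependence in \eqref{eq3.203} comes from $m\sim\log B$ propagated through factors like $\sigma_1^m,\eta_1^m$ in \eqref{delta3}, not from any balancing argument. Your upper-bound half is correct and is exactly Theorem~\ref{thm:osc}, but the conclusion requires the full iteration machinery, not a lower bound.
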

As in \cite{kuusi2013mingione}, we first assume $s>0$. The next two De Giorgi-Nash-Moser type estimates for $Dv$ can be found in \cite{kuusi2013mingione} for vector field $a_0(\cdot)$ with no time dependence. However, they still hold here since their proofs only rely on differentiating the equation with respect to the space variable. See \cite[Remark 3.6]{kuusi2013mingione}.

\begin{proposition}\label{height}
Assume $s>0$ and $\lambda>0$. Suppose that
\begin{equation}\label{1sup}
    s+\sup_{Q_r^\lambda}\|Dv\|\leq A\lambda
\end{equation}
holds for some constant $A\geq 1$. There exists a constant $\sigma\in(0,1/2)$ depending only on $n$, $p$, $\nu$, $L$, $A$ such that if either
\begin{equation}\label{meas1}
    |Q_r^\lambda\cap\{D_{x_i}v<\lambda/2\}|\leq \sigma |Q_r^\lambda|
\end{equation}
or
\begin{equation}\label{meas2}
    |Q_r^\lambda\cap\{D_{x_i}v>-\lambda/2\}|\leq \sigma |Q_r^\lambda|
\end{equation}
holds for some $i\in\{1,\ldots,n\}$,
then
$$
|D_{x_i}v|\geq \lambda/4 \quad \text{a.e. in }  Q_{r/2}^\lambda.
$$
\end{proposition}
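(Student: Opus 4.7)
The strategy is to differentiate the equation in the $x_i$ direction and apply a De Giorgi--Moser measure-to-pointwise estimate to a suitable truncation of $w := D_{x_i} v$. Because $s>0$ and $\|Dv\|$ is bounded on $Q_r^\lambda$ by \eqref{1sup}, the equation for $v$ is uniformly non-degenerate there; standard regularity then yields $v \in W^{2,2}_{\mathrm{loc}}(Q_r^\lambda)$, which makes differentiation via difference quotients rigorous.

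Differentiating $v_t = \div\, a_0(t,Dv)$ in $x_i$ produces the linear parabolic equation
\begin{equation*}
w_t - D_j\bigl(A_{jk}(x,t)\, D_k w\bigr) = 0, \qquad A_{jk}(x,t) := \partial_{\xi_k} a_0^{\,j}(t,Dv(x,t)).
\end{equation*}
By \eqref{ineq:growth} and \eqref{ineq:elliptic}, with $M(x,t) := (s^2 + |Dv|^2)^{(p-2)/2}$ we have $\nu M\, I \leq A(x,t) \leq L M\, I$, and \eqref{1sup} forces the lower bound $M \geq c(A)\,\lambda^{p-2}$ --- the scaling compatible with the intrinsic time length $\lambda^{2-p}r^2$ of $Q_r^\lambda$. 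Assume now \eqref{meas1} and set $u := (\lambda/2 - w)_+$. Then $u$ is a nonnegative subsolution of the same linear equation, bounded above by $(A + 1/2)\lambda =: M_0$, and supported on a set of relative measure at most $\sigma$ in $Q_r^\lambda$. A Moser $L^\infty$--$L^2$ estimate for nonnegative subsolutions on the intrinsic cylinder gives
\begin{equation*}
\sup_{Q_{r/2}^\lambda} u \;\leq\; C\biggl(\fint_{Q_r^\lambda} u^2\, dxdt\biggr)^{1/2} \;\leq\; C\, M_0\, \sigma^{1/2},
\end{equation*}
with $C = C(n,p,\nu,L,A)$. Choosing $\sigma = \sigma(n,p,\nu,L,A) \in (0,1/2)$ small enough that $C M_0 \sigma^{1/2} \leq \lambda/4$ gives $u \leq \lambda/4$ a.e.\ on $Q_{r/2}^\lambda$, i.e.\ $w \geq \lambda/4$. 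The alternative hypothesis \eqref{meas2} is handled by the symmetric argument applied to $-w$, which also solves the same linear equation.

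The main technical obstacle is the uniformity of the iteration constants with respect to $s$, since the pointwise upper bound $M \leq s^{p-2}$ on the leading coefficient degenerates as $s \to 0^+$. The standard way around this is to write $A = M\,\widehat A$ with $\nu I \leq \widehat A \leq L I$ and to observe that in the Caccioppoli and energy inequalities for $u$ the coefficient $M$ multiplies the diffusion term and (after a pointwise time rescaling by $M$) the $\partial_t$ contribution in a compatible way, so the iteration is invariant under pointwise positive rescaling of $A$ and produces constants depending only on the ratio $L/\nu$ and on $A$ through the $L^\infty$ bound on $Dv$. The lower bound $M \geq c(A)\lambda^{p-2}$ together with the intrinsic time length $\lambda^{2-p}r^2$ then renders the iteration scale-free, yielding the claimed uniform $\sigma$, exactly as carried out in \cite[\S 3]{kuusi2013mingione}.
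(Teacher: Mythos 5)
The high-level plan you propose --- differentiate the equation in $x_i$ to obtain a linear parabolic equation for $w := D_{x_i}v$ with coefficient matrix $A(x,t) = \partial_\xi a_0(t,Dv(x,t))$, then run a De Giorgi/Moser measure-to-pointwise argument on the intrinsic cylinder --- is exactly the approach attributed to \cite{kuusi2013mingione}, which this paper cites (see the remark just before the proposition) rather than reproves. The reduction to $w \in W^{2,2}_{\mathrm{loc}}$ using $s>0$, the identification of the factorization $A = M\widehat A$ with $\nu I \le \widehat A \le L I$ and $M=(s^2+|Dv|^2)^{(p-2)/2}$, the lower bound $M \ge c(n,A)\lambda^{p-2}$ coming from \eqref{1sup}, the observation that $u := (\lambda/2-w)_+$ is a nonnegative bounded subsolution, and the Chebyshev step bounding $\fint u^2$ by $M_0^2\sigma$ are all correct.

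The gap is in the last paragraph, and it is genuine. The $L^\infty$--$L^2$ Moser (or De Giorgi) estimate for nonnegative subsolutions that you invoke produces a constant depending on both sides of the ellipticity of the coefficient after the intrinsic time rescaling $t\mapsto\lambda^{2-p}t$, i.e.\ on the ratio of the upper to the lower bound of $\lambda^{2-p}M(x,t)\widehat A$. You have the lower bound $\lambda^{2-p}M \ge c(n,A)$, but the only upper bound available is $\lambda^{2-p}M \le (s/\lambda)^{p-2}$, which blows up as $s/\lambda \to 0^+$. So the $\sigma$ you produce this way depends on $s$, whereas the proposition requires $\sigma = \sigma(n,p,\nu,L,A)$. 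Your proposed remedy, ``pointwise time rescaling by $M$,'' is not a legitimate operation: $M=M(x,t)$ depends on the space variable as well as time, so there is no change of the time variable under which the $\partial_t$ and diffusion terms both carry the weight $M$. In the Caccioppoli inequality for $u$ (test with $-u\zeta^2$), $M$ multiplies only the diffusion terms, and the right-hand term $\int M\,u^2|D\zeta|^2$ is not absorbable by the other terms without an upper bound on $M$ on $\{u>0\}$ --- exactly the set where $|Dv|$ may be small and $M$ large. The true argument in \cite{kuusi2013mingione} is more delicate than the ``invariance under pointwise positive rescaling of $A$'' you appeal to, and that appeal is where the proof, as written, fails.
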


\begin{proposition}\label{prop:deg}
Assume $s>0$ and $\lambda>0$. Suppose that \eqref{1sup} holds and that neither \eqref{meas1} nor \eqref{meas2} is satisfied for the constant $\sigma$ in Proposition \ref{height}. Then there exists a constant $\eta\in (1/2,1)$ depending only on $n$, $p$, $\nu$, $L$, $A$, such that

\begin{equation}\label{eq:deg}
    \|Dv\|\leq \eta A\lambda \quad \text{a.e. in } Q_{\sigma r/2}^\lambda.
\end{equation}
\end{proposition}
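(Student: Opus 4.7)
The plan is to realize this proposition as a De Giorgi--Nash--Moser-type oscillation reduction for the equation satisfied by each component of $Dv$, adapting the $t$-independent argument of the corresponding result in \cite{kuusi2013mingione} to the $t$-dependent vector field $a_0(t,\xi)$. As the remark preceding Proposition \ref{height} notes, the only modification needed is to observe that every step of the Kuusi--Mingione proof involves only differentiation in the space variables, so no new work arises from the time dependence.

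I would first normalize the geometry by the standard intrinsic rescaling $\tilde v(x,t):=v(x_0+rx,\,t_0+\lambda^{2-p}r^2t)/(r\lambda)$, which maps $Q_r^\lambda(x_0,t_0)$ to the unit cylinder $Q_1$ and transforms the equation into one of the same form with $s$ replaced by $\tilde s:=s/\lambda$ and the same structural constants $\nu,L$. Under this rescaling the hypotheses become $\tilde s+\|D\tilde v\|\le A$ on $Q_1$ together with, for every $i\in\{1,\ldots,n\}$, both $|\{D_{x_i}\tilde v<1/2\}|>\sigma|Q_1|$ and $|\{D_{x_i}\tilde v>-1/2\}|>\sigma|Q_1|$, and it suffices to prove $\|D\tilde v\|\le\eta A$ a.e.\ on $Q_{\sigma/2}$.

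Next, for a fixed index $i$ I would differentiate the equation in $x_i$ to obtain the linear parabolic equation $w_t-\text{div}(\mathcal A(x,t)Dw)=0$ for $w:=D_{x_i}\tilde v$, with coefficient matrix $\mathcal A(x,t)=D_\xi\tilde a_0(t,D\tilde v(x,t))$ satisfying, thanks to \eqref{ineq:growth}--\eqref{ineq:elliptic},
$$
\nu\,(\tilde s^2+|D\tilde v|^2)^{(p-2)/2}|\zeta|^2\le\langle\mathcal A\zeta,\zeta\rangle\le L\,(\tilde s^2+|D\tilde v|^2)^{(p-2)/2}|\zeta|^2.
$$
Since $p\le 2$ and $\tilde s+\|D\tilde v\|\le A$, the weight $(\tilde s^2+|D\tilde v|^2)^{(p-2)/2}$ is uniformly bounded from below by a positive constant $c_A=c_A(p,A)$, which is what drives the ellipticity in the iteration. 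The measure condition provides $|\{w>1/2\}\cap Q_1|\le(1-\sigma)|Q_1|$, so a Caccioppoli inequality applied to the truncations $(w-k)_+$ for levels $k\in[1/2,A]$, followed by a standard geometric De Giorgi iteration along shrinking cylinders $Q_{\rho_j}\downarrow Q_{\sigma/2}$ and increasing levels $k_j\uparrow\eta_0A$, yields $(w-\eta_0 A)_+\equiv 0$ on $Q_{\sigma/2}$ for some $\eta_0\in(1/2,1)$ depending only on $n,p,\nu,L,A$. A symmetric argument applied to $-w$, using the second measure condition, gives $w\ge-\eta_0 A$ on $Q_{\sigma/2}$; repeating for every $i$ and undoing the intrinsic rescaling produces $\|Dv\|\le\eta A$ on $Q_{\sigma r/2}^\lambda$ with $\eta:=\eta_0$.

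The main obstacle is ensuring that the constants produced by the Caccioppoli and iteration depend only on $n,p,\nu,L,A$ and \emph{not} on $\tilde s$, which can be arbitrarily small: since $p<2$, the upper bound $(\tilde s^2+|D\tilde v|^2)^{(p-2)/2}$ on $\mathcal A$ blows up like $\tilde s^{p-2}$ as $\tilde s\to 0$, and naively this would spoil the Caccioppoli constant. The resolution, spelled out in \cite{kuusi2013mingione} for the $t$-independent case, is that only the lower ellipticity of $\mathcal A$ is needed on the gradient term of the energy estimate, while the upper bound enters only through cross terms that are reabsorbed by Cauchy--Schwarz with a constant depending solely on $\nu$ and $L$. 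The same inspection shows the argument transfers verbatim to the $t$-dependent setting here.
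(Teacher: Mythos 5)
Your strategy matches the paper's: the paper gives no proof of Proposition \ref{prop:deg} but refers to \cite{kuusi2013mingione} and simply notes that the argument transfers to $t$-dependent $a_0(t,\xi)$ because it only differentiates the equation in the space variables. Your rescaling, the passage to the linear parabolic equation $w_t-\div(\mathcal{A}\,Dw)=0$ for $w=D_{x_i}\tilde v$, and the De Giorgi iteration seeded by the measure information from the failure of \eqref{meas1} and \eqref{meas2} are all the right ingredients, and you correctly identify the place where $t$-dependence could conceivably enter (it doesn't).

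The last paragraph, however, misstates the reason the Caccioppoli constants come out uniform in $\tilde s$. The matrix Cauchy--Schwarz applied to the cross term of $\int\mathcal{A}\,Dw\cdot D[(w-k)_+\varphi^2]$ absorbs one piece into the good term with a universal factor (say $1/2$), but it leaves behind the residual $\int\mathcal{A}\,D\varphi\cdot D\varphi\,(w-k)_+^2\,dxdt$, which is \emph{not} reabsorbed and still carries the potentially unbounded weight $(\tilde s^2+|D\tilde v|^2)^{(p-2)/2}$. What actually makes this term harmless is the support of the integrand: on $\{w>k\}$ one has $|D\tilde v|\geq w>k$, hence $(\tilde s^2+|D\tilde v|^2)^{(p-2)/2}\leq k^{p-2}\leq 2^{2-p}$ for the levels $k\geq 1/2$ that arise from the rescaled $\lambda/2$ threshold in \eqref{meas1}--\eqref{meas2}. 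It is this level-set bound, not Cauchy--Schwarz reabsorption, that keeps the constants uniform; the analogous device in the paper's own Proposition \ref{prop:iter1} is the pointwise lower bound $\lambda/4\leq\|Dv\|$ supplied by Proposition \ref{height}. (As a side point, the weight degenerates where $\tilde s^2+|D\tilde v|^2\to 0$, not merely as $\tilde s\to 0$.) Without the support observation the energy estimate as you have set it up does not close with $\tilde s$-independent constants, so this is a step you would need to fix before the De Giorgi iteration can run.
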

Next, we prove a decay result for the $L^q$-mean oscillation of gradients of solutions to linear parabolic equations with $q\in(0,1)$.
\begin{lemma}\label{lem:linear}
Suppose that $\tilde{u}\in L^2(-1,0;\,W^{1,2}(B_1(0)))$ is a solution to the following linear parabolic equation
\begin{equation}\label{linear}
 \tilde{u}_t-\textup{div}(A(x,t)D\tilde{u})=0,
 \end{equation}
where the matrix $A(x,t)$ has measurable entries and satisfies
$$
\nu_0|\xi|^2\leq \langle A(x,t)\xi,\xi \rangle, \quad |A(x,t)|\leq L_0
$$
for any $\xi \in \mathbb{R}^n$, where $0<\nu_0\leq L_0$ are fixed constants. Let $q\in(0,1)$. Then there exist constants $c_1, c_2\geq 1$ depending on $n$, $\nu_0$, $L_0$, $q$, and  $\beta_0\in(0,1)$ depending only on $n$, $\nu_0$, $L_0$, such that

\begin{equation}\label{eq3.111}
    \sup_{Q_{1/2}} |\tilde{u}|\leq c_1 \Big(\fint_{Q_1}|\tilde{u}|^q \,dxdt\Big)^{1/q}
\end{equation}
 and
 \begin{equation}\label{eq3.112}
    \phi_q(\tilde{u},Q_\delta)\leq c_2\delta^{\beta_0} \phi_q(\tilde{u},Q_1)
 \end{equation}
hold for any $\delta\in(0,1)$. Here for each $\rho>0$, $Q_\rho\equiv B_{\rho}(0)\times (-\rho^2,0)$ stands for the standard parabolic cylinder centered at $0$ with radius $\rho$.
\end{lemma}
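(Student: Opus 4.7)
Both estimates are extensions, to the exponent range $q\in(0,1)$, of two classical facts for linear parabolic equations with bounded measurable coefficients: the Moser sup estimate and the De Giorgi--Nash--Moser H\"older estimate. Since the equation \eqref{linear} is linear, the plan is to reduce to the standard $L^2$ formulations of these facts and then upgrade them via an interpolation/Young inequality argument and Campanato-type manipulations.

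\textbf{Step 1: the $L^q$-sup estimate \eqref{eq3.111}.} The classical Moser iteration applied to a subsolution for $|\tilde u|$ yields, for $0<\rho<R\le 1$,
\begin{equation*}
\sup_{Q_\rho}|\tilde u|\le \frac{c(n,\nu_0,L_0)}{(R-\rho)^{(n+2)/2}}\Bigl(\int_{Q_R}|\tilde u|^2\,dxdt\Bigr)^{1/2}.
\end{equation*}
I would then interpolate the $L^2$ integral against the $L^q$ integral for $q\in(0,1)$,
\begin{equation*}
\int_{Q_R}|\tilde u|^2\,dxdt\le \bigl(\sup_{Q_R}|\tilde u|\bigr)^{2-q}\int_{Q_R}|\tilde u|^q\,dxdt,
\end{equation*}
substitute this into the Moser bound and apply Young's inequality with exponents $2/(2-q)$ and $2/q$ to get
\begin{equation*}
\sup_{Q_\rho}|\tilde u|\le \tfrac12\sup_{Q_R}|\tilde u|+\frac{c}{(R-\rho)^{(n+2)/q}}\Bigl(\int_{Q_R}|\tilde u|^q\,dxdt\Bigr)^{1/q}.
\end{equation*}
A standard absorption/iteration lemma (Giaquinta--Giusti) applied to the function $\rho\mapsto \sup_{Q_\rho}|\tilde u|$ on $[1/2,1]$ then gives \eqref{eq3.111} with a constant $c_1=c_1(n,\nu_0,L_0,q)$.

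\textbf{Step 2: the $\phi_q$-decay \eqref{eq3.112}.} The key observation is that, since \eqref{linear} is linear, for any constant $m\in\R$ the translate $v:=\tilde u-m$ is again a solution. Take $m:=\mathbf{m}(\tilde u,Q_1)$ (the minimiser defined in Section~\ref{sec2.2}), so that
\begin{equation*}
\Bigl(\fint_{Q_1}|v|^q\,dxdt\Bigr)^{1/q}=\phi_q(\tilde u,Q_1).
\end{equation*}
Applying the sup estimate of Step 1 to $v$ gives
\begin{equation*}
\sup_{Q_{1/2}}|v|\le c_1\,\phi_q(\tilde u,Q_1).
\end{equation*}
Now I invoke the De Giorgi--Nash--Moser H\"older estimate for solutions of \eqref{linear} (whose constants depend only on $n,\nu_0,L_0$): there exists $\beta_0\in(0,1)$ such that
\begin{equation*}
\bigl|v(x_1,t_1)-v(x_2,t_2)\bigr|\le c(n,\nu_0,L_0)\,|(x_1,t_1)-(x_2,t_2)|_{\text{par}}^{\beta_0}\sup_{Q_{1/2}}|v|
\end{equation*}
for all $(x_i,t_i)\in Q_{1/4}$. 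Combining these two inequalities,
\begin{equation*}
\operatorname*{osc}_{Q_\delta} v\le c\,\delta^{\beta_0}\sup_{Q_{1/2}}|v|\le c\,c_1\,\delta^{\beta_0}\,\phi_q(\tilde u,Q_1)\qquad\text{for every }\delta\in(0,1/4].
\end{equation*}
Since $\phi_q(\tilde u,Q_\delta)=\phi_q(v,Q_\delta)\le\operatorname*{osc}_{Q_\delta}v$ (by choosing $\Theta=v(0,0)$ in the infimum defining $\phi_q$), this gives \eqref{eq3.112} for $\delta\le 1/4$.

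\textbf{Step 3: the remaining range.} For $\delta\in(1/4,1)$ the estimate is immediate from the volume comparison argument already used in \eqref{eq:mdiff}: one has
\begin{equation*}
\phi_q(\tilde u,Q_\delta)\le \Bigl(\frac{|Q_1|}{|Q_\delta|}\Bigr)^{1/q}\phi_q(\tilde u,Q_1)\le 4^{(n+2)/q}\phi_q(\tilde u,Q_1),
\end{equation*}
which is of the form \eqref{eq3.112} after enlarging $c_2$ by $4^{(n+2)/q+\beta_0}$. This produces a constant $c_2$ depending on $n,\nu_0,L_0,q$ and an exponent $\beta_0$ depending only on $n,\nu_0,L_0$, as required.

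\textbf{Expected main difficulty.} There is no real obstacle: the argument is entirely standard, modulo the slightly delicate bookkeeping of constants in the Young-inequality step (to make sure $c_1$ depends on $q$ but not in a way that blows up as $q\uparrow 1$ for our eventual application). The choice of translating by $\mathbf{m}(\tilde u,Q_1)$ is what allows the decay on the right-hand side to be $\phi_q$ rather than the full $L^q$-norm of $\tilde u$, and it crucially uses the linearity of \eqref{linear}.
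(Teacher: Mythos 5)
Your proposal is correct and follows essentially the same route as the paper: obtain the $L^q\to L^\infty$ bound by combining the standard $L^2$ local boundedness estimate with an interpolation/Young absorption argument, and derive the oscillation decay by applying the De Giorgi--Nash--Moser H\"older estimate to $\tilde u$ (or equivalently to $\tilde u-\mathbf m(\tilde u,Q_1)$, using linearity) and feeding in \eqref{eq3.111}. The only cosmetic difference is ordering: you subtract $\mathbf m(\tilde u,Q_1)$ at the start, while the paper derives the decay for general $\tilde u$ and performs the shift at the very end; both handle $\delta\in[1/4,1)$ by the same volume comparison.
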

\begin{proof}
First, the estimate \eqref{eq3.111} follows  from standard local boundedness estimates for nondegenerate parabolic equations (see for example (3.26) in \cite[Lemma 3.1]{kuusi2014the}) and a standard interpolation and iteration argument. See also \cite[Theorems 6.17]{lieberman1996second}.
Next we give the proof of \eqref{eq3.112} for $\delta\in(0,1/4)$.
From the classical De Giorgi-Nash-Moser theory for linear parabolic equations (see for example \cite[Theorems 6.28-6.29]{lieberman1996second}), there exist constants $c_0\geq 1$ and $\beta_0\in(0,1)$, both depending on $n$, $\nu_0$, $L_0$, such that $\tilde{u}\in C^{\beta_0}(Q_{1/4})$ and that
$$
\|\tilde{u}\|_{C^{\beta_0}(Q_{1/4})}\leq c_0 \sup_{Q_{1/2}}|\tilde{u}|,
$$
where $\|\cdot\|_{C^{\beta_0}}$ is the parabolic H\"older norm defined in \eqref{holdernorm}.
Thus, using the last estimate and the assumption that $\delta\in(0,1/4)$, we have
\begin{equation}\label{eq3.113}
\begin{aligned}
  &\phi_q(\tilde{u},Q_\delta)\leq
  \Big(\fint_{Q_\delta} |\tilde{u}-\tilde{u}(0,0)|^q\,dxdt\Big)^{1/q} \leq  (2\delta)^{\beta_0} \|\tilde{u}\|_{C^{\beta_0}(Q_{1/4})} \leq c\delta^{\beta_0}\sup_{Q_{1/2}} |\tilde{u}|,
\end{aligned}
\end{equation}
for some constant $c=c(n, \nu_0, L_0)>0$. Combining \eqref{eq3.113} and \eqref{eq3.111}, we get
$$
\phi_q(\tilde{u},Q_\delta)\leq c_2 \delta^{\beta_0}\Big(\fint_{Q_1}|\tilde{u}|^q\, dxdt\Big)^{1/q},
$$
and therefore \eqref{eq3.112} follows by replacing $\tilde{u}$ with $\tilde{u}-\mathbf{m}(\tilde{u},Q_1)$ since  $\tilde{u}-\mathbf{m}(\tilde{u},Q_{1})$ is still a solution to \eqref{linear} and $\phi_q(\tilde{u},Q_\delta)\equiv \phi_q(\tilde{u}-\mathbf{m}(\tilde{u},Q_{1}),Q_\delta)$.

Finally, for $\delta\in[1/4,1)$, the proof of \eqref{eq3.112} follows standard manipulations as follows. Recalling the definition of $\mathbf{m}$, we have
\begin{align*}
    &\phi_q(\tilde{u},Q_\delta)\leq  \Big(\fint_{Q_\delta} |\tilde{u}-\mathbf{m}(\tilde{u},Q_1)|^q\,dxdt\Big)^{1/q}
     \\&\leq
     \Big(\frac{|Q_1|}{|Q_\delta|}\Big)^{1/q} \Big(\fint_{Q_1} |\tilde{u}-\mathbf{m}(\tilde{u},Q_1)|^q\,dxdt\Big)^{1/q}
     \leq 4^{(n+2)/q} \phi_q(\tilde{u},Q_1).
\end{align*}
The proof is now completed.
\end{proof}
Combining Proposition \ref{height} and Lemma \ref{lem:linear}, we obtain the following result by using a scaling argument.
\begin{proposition}\label{prop:iter1}
Suppose that $s>0$, $\lambda>0$, and that \eqref{1sup} holds. There exist constants $\beta\in(0,1)$ depending on $n$, $p$, $\nu$, $L$, $A$, and $c_d\geq 1$ depending on $n$, $p$, $\nu$, $L$, $A$, $q$,  such that if either \eqref{meas1} or \eqref{meas2} holds for  the constant $\sigma$ in Proposition \ref{height} and some $i\in\{1,\ldots,n\}$, then
\begin{equation}\label{eq3.131}
    \phi_q(Dv,Q_{\delta r}^\lambda) \leq c_d \delta^{\beta} \phi_q(Dv,Q_{r}^\lambda).
\end{equation}

\end{proposition}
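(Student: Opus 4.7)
The plan is to use Proposition \ref{height} to produce a nondegenerate region where the equation can be differentiated into a linear equation with bounded measurable coefficients and scale-invariant ellipticity, and then apply Lemma \ref{lem:linear} to each component of $Dv$. More precisely, under the measure hypothesis \eqref{meas1} or \eqref{meas2}, Proposition \ref{height} gives $|D_{x_i}v|\geq \lambda/4$ a.e.\ in $Q_{r/2}^\lambda$. Combined with \eqref{1sup}, this yields
$$
\lambda/4\,\leq\,|D_{x_i}v(x,t)|\,\leq\,\sqrt{n}\,A\lambda,\qquad s+|Dv(x,t)|\,\leq\,(1+\sqrt{n})A\lambda\quad\text{a.e.\ in }Q_{r/2}^\lambda,
$$
so that $(s^2+|Dv|^2)^{1/2}$ is comparable to $\lambda$ with constants depending only on $n$ and $A$. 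In this nondegenerate region, standard theory for nondegenerate parabolic systems (together with the $s>0$ assumption) gives $v\in W^{2,2}_{\mathrm{loc}}$, and therefore each component $w_j:=D_{x_j}v$ is a weak solution of the linear parabolic equation
$$
\partial_t w_j-\mathrm{div}\bigl(\mathcal{A}(x,t)\,Dw_j\bigr)=0\quad\text{in }Q_{r/2}^\lambda,\qquad \mathcal{A}(x,t):=D_\xi a_0(t,Dv(x,t)).
$$
By \eqref{ineq:growth}--\eqref{ineq:elliptic} together with the size bounds above,
$$
\nu_1\lambda^{p-2}|\eta|^2\,\leq\,\langle\mathcal{A}(x,t)\eta,\eta\rangle,\qquad |\mathcal{A}(x,t)|\,\leq\,L_1\lambda^{p-2},
$$
with constants $\nu_1,L_1>0$ depending only on $n,p,\nu,L,A$.

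Next I rescale the intrinsic cylinder to a standard one. Setting
$$
\tilde w_j(y,\tau):=w_j\bigl(x_0+(r/2)y,\;t_0+\lambda^{2-p}(r/2)^2\tau\bigr),\qquad (y,\tau)\in Q_1,
$$
the equation becomes $\partial_\tau \tilde w_j-\mathrm{div}_y(\tilde{\mathcal{A}}\,D_y\tilde w_j)=0$ on $Q_1$, with $\tilde{\mathcal{A}}(y,\tau):=\lambda^{2-p}\mathcal{A}(x_0+(r/2)y,\,t_0+\lambda^{2-p}(r/2)^2\tau)$ uniformly elliptic and bounded with constants $\nu_1,L_1$ depending only on $n,p,\nu,L,A$. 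Lemma \ref{lem:linear} then yields, for every $\delta\in(0,1)$,
$$
\phi_q(\tilde w_j,Q_\delta)\,\leq\,c_2\,\delta^{\beta_0}\,\phi_q(\tilde w_j,Q_1),
$$
with $\beta_0$ depending only on $n,p,\nu,L,A$ and $c_2$ depending additionally on $q$. Because $\phi_q$ is invariant under the affine change of variables (the extra factors from integrating cancel against those from averaging), this translates back to
$$
\phi_q(w_j,\,Q_{\delta r/2}^{\lambda})\,\leq\,c_2\,\delta^{\beta_0}\,\phi_q(w_j,\,Q_{r/2}^{\lambda}).
$$

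Finally, I pass from components to the vector-valued gradient using the elementary inequality $|\Theta|^q\leq\sum_j|\Theta_j|^q$ for $q\in(0,1)$, which gives $\phi_q(Dv,\mathcal{C})\leq n^{1/q}\max_j\phi_q(D_{x_j}v,\mathcal{C})$, together with the converse bound $\phi_q(D_{x_j}v,\mathcal{C})\leq \phi_q(Dv,\mathcal{C})$ (choosing $\Theta_j=\mathbf{m}(Dv,\mathcal{C})_j$). Combining these with the componentwise estimate above and comparing $\phi_q(Dv,Q_{r/2}^\lambda)$ to $\phi_q(Dv,Q_r^\lambda)$ via \eqref{eq:mdiff} and $|Q_r^\lambda|/|Q_{r/2}^\lambda|=2^{n+2}$, one obtains
$$
\phi_q(Dv,\,Q_{\delta r/2}^\lambda)\,\leq\,c\,\delta^{\beta_0}\,\phi_q(Dv,\,Q_r^\lambda)
$$
for $\delta\in(0,1)$, and after relabeling $\delta/2\mapsto\delta$ this gives \eqref{eq3.131} for $\delta\in(0,1/2)$. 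The remaining range $\delta\in[1/2,1)$ follows from the trivial volume-comparison argument already used at the end of the proof of Lemma \ref{lem:linear}. The main technical hurdle is the clean scaling of the intrinsic cylinder: the factor $\lambda^{2-p}$ in the time scaling is precisely what absorbs the $\lambda^{p-2}$ ellipticity of the linearized matrix, rendering $\tilde{\mathcal{A}}$ uniformly elliptic regardless of $\lambda$; without this intrinsic scaling the constants $c_d,\beta$ would depend on $\lambda$, which would be fatal in the later iteration arguments.
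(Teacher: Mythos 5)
Your argument is essentially the paper's proof of Proposition \ref{prop:iter1}, just with the rescaling and differentiation steps performed in the opposite order (you differentiate $v$ first and then rescale $D_{x_j}v$; the paper rescales $v$ first and then differentiates the rescaled solution). All the key moves coincide: Proposition \ref{height} plus \eqref{1sup} give the two-sided bound $\lambda/4\leq |Dv|\leq c(n,A)\lambda$ on $Q_{r/2}^\lambda$, the intrinsic time-scaling $\lambda^{2-p}(r/2)^2$ precisely normalizes the linearized coefficient matrix to be uniformly elliptic independently of $\lambda$, Lemma \ref{lem:linear} gives the componentwise decay, the subadditivity $|\Theta|^q\leq\sum_j|\Theta_j|^q$ together with $\phi_q(D_{x_j}v,\cdot)\leq\phi_q(Dv,\cdot)$ handles the passage to the full gradient, and the volume comparison $\phi_q(Dv,Q_{r/2}^\lambda)\leq 2^{(n+2)/q}\phi_q(Dv,Q_r^\lambda)$ closes the estimate. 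The proposal is correct and takes essentially the same approach.
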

\begin{proof}
We will only give the proof for $\delta\in(0,1/2)$ since the proof for $\delta\in[1/2,1)$ follows by standard manipulations as in Lemma \ref{lem:linear}.
From Proposition \ref{height}, we have
\begin{equation}\label{bounds1}
    \lambda/4\leq \|Dv(x,t)\|\leq s+\|Dv(x,t)\|\leq A\lambda \quad \forall \,(x,t)\in Q_{r/2}^\lambda.
\end{equation}
Then we rescale the solution in the cylinder $Q_1$, namely,
\begin{equation}\label{rescale}
\tilde{v}(x,t):=\frac{1}{r_1}v(r_1 x,\lambda^{2-p}r_1^2t), \quad (x,t)\in Q_1,
\end{equation}
where $r_1=r/2$.
Thus $\tilde{v}$ satisfies
\begin{equation}\label{eq:tildev}
   \lambda^{p-2} \tilde{v}_t-\text{div}\, \tilde{a}_0(t,D\tilde{v})=0,
\end{equation}
where
$$\tilde{a}_0(t,z)=a_0(\lambda^{2-p}r_1^2t,z), \quad t\in(-1,0),\; z\in \mathbb{R}^n.$$
Also, \eqref{bounds1} implies
\begin{equation}\label{bounds2}
    \lambda/4\leq \|D\tilde{v}(x,t)\|\leq s+\|D\tilde{v}(x,t)\|\leq A\lambda \quad \forall \,(x,t)\in Q_1.
\end{equation}
Since $D\Tilde{v}$ is bounded, we know that
$$ D\Tilde{v}\in L^2_{\text{loc}}(-1,0;\,W^{1,2}_{\text{loc}}(B_1,\mathbb{R}^n))\cap C^0(-1,0;\,L^2_{\text{loc}}(B_1,\mathbb{R}^n)).$$
See \cite[Chapter 8, Section 3]{dibenedetto1993degenerate} for details. Therefore we can differentiate \eqref{eq:tildev} in $x_i$-direction for each $i\in\{1,\ldots,n\}$ and get
\begin{equation}\label{eq:dv_i}
(\tilde{v}_{x_i})_t-\text{div}( A(x,t)D\tilde{v}_{x_i})=0, \quad \text{where } A(x,t):=\lambda^{2-p} \partial_z \tilde{a}_0(t,D\tilde{v}(x,t)).
\end{equation}
By \eqref{bounds2}, the matrix $A(x,t)$ is uniformly elliptic, namely, for any $\eta\in\R^n$
\begin{equation*}
    \langle A(x,t)\eta,\eta\rangle\geq \nu\lambda^{2-p}(|D\tilde{v}(x,t)|^2+s^2)^\frac{p-2}{2} |\eta|^2\geq \nu (\sqrt{n}A)^{p-2} |\eta|^2,
\end{equation*}
and
$$
|A(x,t)|\leq L \lambda^{2-p} (|D\tilde{v}(x,t)|^2+s^2)^\frac{p-2}{2} \leq  4^{2-p}L.
$$
holds whenever $\xi\in \mathbb{R}^n$ for some constant $\Lambda\geq 1$ depending only on $n$, $p$, $\nu$, $L$, $A$.
Therefore we can apply Lemma \ref{lem:linear} to $\tilde{v}_{x_i}$  and get
\begin{equation}\label{eq3.132}
    \phi_q(\tilde{v}_{x_i},Q_{\delta_0})\leq c\delta_0^\beta \phi_q(\tilde{v}_{x_i},Q_1)
 \end{equation}
for any $\delta_0\in(0,1)$ and $i\in\{1,\ldots,n\}$, where $\beta=\beta(n,p,\nu,L,A)\in(0,1)$ and $c=c(n,p,\nu,L,A,q)\geq 1$.
Let $\Theta_0:=(\mathbf{m}(\tilde{v}_{x_1},Q_{\delta_0}),\ldots,\mathbf{m}(\tilde{v}_{x_n},Q_{\delta_0}))$
and $\Theta_1:=(\mathbf{m}(\tilde{v}_{x_1},Q_{1}),\ldots,\mathbf{m}(\tilde{v}_{x_n},Q_{1}))$.
Using the triangle inequality and Jensen's inequality, we have
\begin{equation}\label{eq:deltaq}
\phi_q(D\tilde{v},Q_{\delta_0})\leq \Big(\fint_{Q_{\delta_0}} |D\tilde{v}-\Theta_0|^q\,dxdt\Big)^{1/q}
\leq n^{1/q-1}\sum_{i=1}^n \phi_q(\tilde{v}_{x_i},Q_{\delta_0}).
\end{equation}
On the other hand, by the definition of $\phi_q$, we have
$$\phi_q(\tilde{v}_{x_i},Q_1)\leq\phi_q(D\tilde{v},Q_{1}),\quad \forall \,i\in\{1,\ldots,n\}$$
and therefore
\begin{equation}\label{eq:1q}
    \sum_{i=1}^n \phi_q(\tilde{v}_{x_i},Q_{1})\leq n \phi_q(D\tilde{v},Q_{1}).
\end{equation}
By \eqref{eq3.132}, \eqref{eq:deltaq}, and \eqref{eq:1q}, we obtain
\begin{equation}\label{eq3.133}
     \phi_q(D\tilde{v},Q_{\delta_0})\leq c'\delta_0^\beta \phi_q(D\tilde{v},Q_{1})
\end{equation}
for some $c'=c'(n,p,\nu,L,A,q)\geq 1$.
 Rescaling back in $v$, \eqref{eq3.133} becomes
 \begin{equation}\label{eq3.134}
     \phi_q(D{v},Q_{\delta r}^\lambda)\leq c'\delta^\beta \phi_q(D{v},Q_{r/2}^\lambda),
\end{equation}
where $\delta=\delta_0/2\in(0,1/2).$
Moreover, by the definition of $\phi_q$, we see that
\begin{equation}\label{eq3.135}
  \phi_q(D{v},Q_{r/2}^\lambda) \leq  \Big(\fint_{Q_{r/2}^\lambda} |D{v}-\mathbf{m}(D{v},Q_{r}^\lambda)|^q\,dxdt\Big)^{1/q} \leq 2^{(n+2)/q}\phi_q(D{v},Q_{r}^\lambda).
\end{equation}
Combining \eqref{eq3.134} and \eqref{eq3.135}, we obtain \eqref{eq3.131} for any $\delta\in(0,1/2)$. The proof is now completed.
\end{proof}

Similarly, we have the following result for relatively large $s$.
\begin{proposition}\label{prop:iter2}
Let $\lambda,A>0$ be constants.
Assume that
\begin{equation}\label{eq3.121}
    \sup_{Q_r^\lambda} \|Dv\|\leq A\lambda \quad \text{and} \quad 0<\xi \lambda\leq s\leq \xi_1 A\lambda, \quad \text{where }0<\xi\leq \xi_1 A.
\end{equation}
Then there exist constants $\beta_1\in(0,1)$ depending on $n$, $p$, $\nu$, $L$, $A$, $\xi$, $\xi_1$, and $\tilde{c}_d\geq 1$ depending on $n$, $p$, $\nu$, $L$, $A$, $\xi$, $\xi_1$, $q$, such that
\begin{equation}\label{eq3.122}
    \phi_q(Dv,Q_{\delta r}^\lambda)\leq \tilde{c}_d \delta^{\beta_1} \phi_q(Dv,Q_r^\lambda)
\end{equation}
holds for any $\delta\in(0,1)$.
\end{proposition}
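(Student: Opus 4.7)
The proof follows the same blueprint as that of Proposition \ref{prop:iter1}, with one simplification and one adjustment. The simplification is that the De Giorgi alternative encoded by Proposition \ref{height} is no longer needed: the assumption $s \geq \xi\lambda$ already furnishes the nondegeneracy required to apply Lemma \ref{lem:linear} to the linearized equation. The adjustment is that the complementary bound $s \leq \xi_1 A \lambda$, paired with $\sup \|Dv\| \leq A\lambda$, is precisely what bounds the ellipticity constant of that linearized operator from below.

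Concretely, I would rescale
$$\tilde v(x,t) := \tfrac{1}{r}\, v(r x, \lambda^{2-p} r^2 t), \qquad (x,t) \in Q_1,$$
so that $\tilde v$ solves $\lambda^{p-2} \tilde v_t - \div \tilde a_0(t, D\tilde v) = 0$ with $\tilde a_0(t,z) := a_0(\lambda^{2-p} r^2 t, z)$ inheriting \eqref{ineq:growth}--\eqref{ineq:elliptic}, and with $|D\tilde v| \leq \sqrt{n}\,A\lambda$ pointwise in $Q_1$. Since $D\tilde v$ is bounded, the standard regularity theory cited in the proof of Proposition \ref{prop:iter1} justifies differentiating the equation in $x_i$ to produce, for each $i\in\{1,\ldots,n\}$, the linear equation
$$(\tilde v_{x_i})_t - \div\bigl(A(x,t)\, D\tilde v_{x_i}\bigr) = 0, \qquad A(x,t) := \lambda^{2-p} \partial_z \tilde a_0\bigl(t, D\tilde v(x,t)\bigr).$$

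The key computation is then the uniform ellipticity and boundedness of $A(x,t)$. Combining \eqref{ineq:elliptic} with the upper estimate $|D\tilde v|^2 + s^2 \leq (n + \xi_1^2) A^2 \lambda^2$ gives
$$\langle A(x,t)\eta, \eta\rangle \geq \nu\, \lambda^{2-p} \bigl((n+\xi_1^2) A^2 \lambda^2\bigr)^{(p-2)/2} |\eta|^2 = \nu (n+\xi_1^2)^{(p-2)/2} A^{p-2}\, |\eta|^2,$$
while combining \eqref{ineq:growth} with the lower estimate $|D\tilde v|^2 + s^2 \geq \xi^2 \lambda^2$ gives
$$|A(x,t)| \leq L\, \lambda^{2-p} (\xi^2 \lambda^2)^{(p-2)/2} = L\, \xi^{p-2}.$$
Both constants are $\lambda$- and $r$-free, depending only on $n, p, \nu, L, A, \xi, \xi_1$. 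Lemma \ref{lem:linear} applied componentwise thus yields
$$\phi_q(\tilde v_{x_i}, Q_{\delta_0}) \leq c\, \delta_0^{\beta_1}\, \phi_q(\tilde v_{x_i}, Q_1)$$
for every $\delta_0 \in (0,1)$, with $\beta_1$ depending on $(n, p, \nu, L, A, \xi, \xi_1)$ and $c$ depending additionally on $q$.

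Finally, I would assemble these componentwise estimates into the analogous bound for $D\tilde v$ via the same triangle-inequality and Jensen-type manipulations used in \eqref{eq:deltaq}--\eqref{eq:1q}, and then rescale back to obtain \eqref{eq3.122} for $\delta \in (0, 1/2)$; the range $\delta \in [1/2, 1)$ is disposed of by the standard volume-ratio argument appearing at the end of the proof of Lemma \ref{lem:linear}. The only novel content beyond the proof of Proposition \ref{prop:iter1} is the two-sided ellipticity bookkeeping just performed, and it is precisely here that both sides of \eqref{eq3.121} are used in an essential way; no other substantive obstacle arises.
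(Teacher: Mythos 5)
Your proposal is correct and follows the same route as the paper's proof: rescale on $Q_1$, differentiate the rescaled equation, use the two-sided bound \eqref{eq3.121} to derive precisely the ellipticity constant $\nu(n+\xi_1^2)^{(p-2)/2}A^{p-2}$ and boundedness constant $L\xi^{p-2}$ for the coefficient matrix, then invoke Lemma \ref{lem:linear} and assemble the componentwise estimates as in Proposition \ref{prop:iter1}. Your observation that the De Giorgi alternative from Proposition \ref{height} is bypassed here, since $s\geq\xi\lambda$ already supplies nondegeneracy, is also exactly what the paper's argument relies on (if not stated explicitly there).
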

\begin{proof}
First, we rescale the solution in $Q_1$ as in \eqref{rescale}, but this time with $r_1=r$. Then the rescaled solution $\tilde{v}$ stills satisfies \eqref{eq:tildev} in $Q_1$ and $\tilde{v}_{x_i}$ solves \eqref{eq:dv_i}. This time we can use \eqref{eq3.121} to get
\begin{equation*}
    \langle A(x,t)\eta,\eta\rangle\geq \nu\lambda^{2-p}(|D\tilde{v}(x,t)|^2+s^2)^\frac{p-2}{2} |\eta|^2\geq \nu (n+\xi_1^2)^\frac{p-2}{2}A^{p-2} |\eta|^2,
\end{equation*}
and
$$
|A(x,t)|\leq L \lambda^{2-p} (|D\tilde{v}(x,t)|^2+s^2)^\frac{p-2}{2} \leq  \xi^{p-2}L.
$$
 Then, we can proceed exactly as in Proposition \ref{prop:iter1} and eventually obtain \eqref{eq3.122}.
\end{proof}

Now we are ready to give the proofs of
Theorems \ref{thm:iter1} and \ref{thm:iter2}.
\begin{proof}[Proof of Theorem \ref{thm:iter1}]
Our proof closely follows that of \cite[Theorem 3.1]{kuusi2013mingione} using the ``singular iteration'' argument with
Propositions \ref{prop:iter1}, \ref{prop:iter2}, and \ref{prop:deg} in place of
\cite[Propositions 3.8, 3.9, and 3.11]{kuusi2013mingione}.  More specifically, the case when Proposition \ref{prop:iter1} occurs is called the nonsingular alternative and the case when Proposition \ref{prop:deg} occurs is called the singular alternative. The main idea is to construct a chain of intrinsic cylinders where the singular alternative occurs and show that the chain will stop in a finite time because of the occurrence of the nonsingular alternative. As in \cite{kuusi2013mingione}, we first assume $s>0$ in Steps 1-5 below.

\emph{Step 1: Stopping time for the singular iteration and the choice of $\xi$.}
Let $\eta_1:=(1+\eta)/2\in(0,1)$, where $\eta$ is given in Proposition \ref{prop:deg}, and  therefore $\eta_1$ depends only on $n$, $p$, $\nu$, $L$, and $A$. Define $m$ as the smallest integer such that
\begin{equation}\label{def:stopping}
\eta_1^m A<\frac{1}{2B}.
\end{equation}
Thus we know that $m$ depends on
$n$, $p$, $\nu$, $L$, $A$, $B$, and we have
\begin{equation}\label{ineq:stopping}
\frac{\log(2AB)}{-\log(\eta_1)}<m\leq 1+\frac{\log(2AB)}{-\log(\eta_1)}.
\end{equation}
Next we choose
\begin{equation}\label{eq:choicexi}
\xi:=\min\{1/8,\,(\eta_1-\eta)\eta_1^{2m} A\},
\end{equation}
which also depends on $n$, $p$, $\nu$, $L$, $A$, and $B$.

\emph{Step 2: The first nonsingular case: $\xi\lambda\leq s$.} Proposition \ref{prop:iter2} implies that \eqref{eq3.102} holds whenever
\begin{equation}\label{eq:delta1}
    \delta_\gamma\leq (\gamma/\tilde{c}_d)^{1/\beta_1}.
\end{equation} Note that
here $\beta_1 \in (0,1)$ depends only on $n$, $p$, $\nu$, $L$, $A$, $B$, and $\tilde{c}_d\geq 1$ depends on $n$, $p$, $\nu$, $L$, $A$, $B$, and $q$. From now on, we assume instead
\begin{equation}\label{eq:3.501}
 s< \xi\lambda\leq (\eta_1-\eta)\eta_1^{2m} A\lambda,
\end{equation}
where the second inequality always holds by the definition of $\xi$ in \eqref{eq:choicexi}.

\emph{Step 3: The singular iteration.}
Given a cylinder $Q_r^\lambda$, where \eqref{1sup} holds, then by Propositions \ref{prop:iter1} and \ref{prop:deg}, we know that at least one of \eqref{eq3.131} and  \eqref{eq:deg} must hold. The case when Proposition \ref{prop:iter1} applies is called the nonsingular alternative, while the case when
Proposition \ref{prop:deg} applies is called the singular alternative. We now let $\sigma_1=\sigma/2$, where $\sigma$ is the constant in Proposition \ref{height}, and therefore $\sigma_1$ depends only on $n$, $p$, $\nu$, $L$, and $A$. We define the sequences
$\lambda_0:=\lambda$, $\lambda_{i+1}:=\eta_1 \lambda_i$ and  $R_0:=r$, $R_{i+1}:=\sigma_1 R_i$.
Exactly as in the proof of \cite[Theorem 3.1]{kuusi2013mingione}, we build the singular iteration scheme by induction. Assume that the singular alternative holds in $Q_{R_{i-1}}^{\lambda_{i-1}}$ whenever $i\in\{1,\ldots, j\}$ for some $1\leq j\leq 2m$. Arguing exactly as in \cite{kuusi2013mingione}, under the condition \eqref{eq:3.501} we obtain
\begin{equation}\label{eq:singiter}
    s+\sup_{Q_{R_j}^{\lambda_j}} \|Dv\|\leq A \lambda_j,
\end{equation}
which verifies the upper bound \eqref{1sup} in the cylinder $Q_{R_j}^{\lambda_j}$. Then we can determine whether the singular alternative or the nonsingular alternative occurs in $Q_{R_j}^{\lambda_j}$.

\emph{Step 4: The second nonsingular case.}
Let
\begin{equation}\label{def:del}
\delta_\gamma\leq \underline{\delta}:=(\eta_1^{(2-p)/2} \sigma_1)^{m+1},
\end{equation}
and define
\begin{equation}\label{def:tm}
  \tilde{m}=\min\{k\in \mathbb{N}:\; \text{the singular alternative does not hold in } Q_{R_k}^{\lambda_k}\}.
\end{equation}
By \eqref{lower1}, \eqref{eq:3.501}, and the definition of $m$ in \eqref{def:stopping}, arguing exactly as in \cite{kuusi2013mingione}, we have $\tilde{m}\leq m$. Moreover, by \eqref{eq:singiter} and the second inequality in \eqref{lower1}, it holds that
\begin{equation}\label{eq:singiter2}
   s+\sup_{Q_{R_j}^{\lambda_j}} \|Dv\|\leq A \lambda_j,\quad \text{for } j\in\{0,\ldots,\tilde{m}\}.
\end{equation}
Since \eqref{eq:singiter2} in particular holds for $j=\tilde{m}$, we can apply Proposition \ref{prop:iter1} in $Q_{R_{\tilde{m}}}^{\lambda_{\tilde{m}}}$ and get
\begin{equation}\label{eq3.141}
      \phi_q(Dv, Q_{\delta R_{\tilde{m}}}^{\lambda_{\tilde{m}}})\leq c_d \delta^{\beta} \phi_q(Dv,Q_{R_{\tilde{m}}}^{\lambda_{\tilde{m}}})
\end{equation}
for any $\delta\in(0,1)$, where $c_d\geq 1$ is a constant depending on $n$, $p$, $\nu$, $L$, $A$, and $q$.
Moreover, using the fact that $\tilde{m}\leq m$, we have
\begin{equation}\label{eq3.142}
    \begin{aligned}
         \phi_q(Dv,Q_{R_{\tilde{m}}}^{\lambda_{\tilde{m}}})&\leq \Big(\frac{|Q_r^\lambda|}{|Q_{R_{\tilde{m}}}^{\lambda_{\tilde{m}}}|}\Big)^{1/q} \Big(\fint_{Q_r^\lambda} |Dv-\mathbf{m}(Dv,Q_r^\lambda)|^q\,dxdt\Big)^{1/q}
         \\&\leq \sigma_1^{-m(n+2)/q}\eta_1^{-m(2-p)/q} \phi_q(Dv,Q_r^\lambda).
    \end{aligned}
\end{equation}
Furthermore, by the definition of $\underline{\delta}$ in \eqref{def:del} and again the fact that $\tilde{m}\le m$, we have $Q_{\delta\underline{\delta} r}^\lambda\subset Q_{\delta R_m}^{\lambda_m}\subset
Q_{\delta R_{\tilde{m}}}^{\lambda_{\tilde{m}}}
$
and thus
\begin{equation}\label{eq3.143}
\begin{aligned}
     \phi_q(Dv, Q_{ \delta\underline{\delta} r}^{\lambda})&\leq
      \Big( \frac{|Q_{\delta R_{\tilde{m}}}^{\lambda_{\tilde{m}}}|}{|Q_{ \delta\underline{\delta} r}^{\lambda}|}  \Big)^{1/q}
      \Big(\fint_{Q_{\delta R_{\tilde{m}}}^{\lambda_{\tilde{m}}}} |Dv-\mathbf{m}(Dv,Q_{\delta R_{\tilde{m}}}^{\lambda_{\tilde{m}}})|^q\,dxdt\Big)^{1/q}
      \\& \leq \underline{\delta}^{-(n+2)/q}  \phi_q(Dv,Q_{\delta R_{\tilde{m}}}^{\lambda_{\tilde{m}}}).
\end{aligned}
\end{equation}
Combining \eqref{eq3.141}, \eqref{eq3.142}, and \eqref{eq3.143}, we obtain
\begin{align*}
          \phi_q(Dv, Q_{ \delta\underline{\delta} r}^{\lambda})\leq c_d \delta^\beta \Big[\underline{\delta}^{-(n+2)}\sigma_1^{-m(n+2)}\eta_1^{-m(2-p)}\Big]^{1/q} \phi_q(Dv,Q_r^\lambda)
\end{align*}
for any $\delta\in(0,1)$.
Thus \eqref{eq3.102} holds for any
\begin{equation}\label{eq:delta2}
 \delta_\gamma:=\delta \underline{\delta},\quad \text{with}\quad \delta\leq \Big(\frac{\gamma}{c_d}\Big)^\frac{1}{\beta}\Big( \underline{\delta}^{n+2}\sigma_1^{m(n+2)}\eta_1^{m(2-p)}\Big)^\frac{1}{q\beta}.
\end{equation}

\emph{Step 5: Determining the constant $\delta_\gamma$.} In view of conditions \eqref{eq:delta1} and \eqref{eq:delta2}, we can define
$$
\delta_\gamma=\underline{\delta} \Big( \underline{\delta}^{n+2}\sigma_1^{m(n+2)}\eta_1^{m(2-p)}\Big)^\frac{1}{q\beta} \Big( \frac{\gamma}{c_d+\tilde{c}_d}\Big)^\frac{1}{\min\{\beta,\beta_1\}},
$$
where we recall that $\underline{\delta}=(\eta_1^{(2-p)/2} \sigma_1)^{m+1}$.
The proof is now completed for the case $s>0$.

\emph{Step 6: The case $s=0$ via approximation.}
The approximation scheme introduced in \cite[Section 3.3]{kuusi2013mingione} also works perfectly here. The only differences are that $v$ is in place of $w$ and that instead of choosing $\tilde{\gamma}=2^{-(n+2)}\gamma$ as in \cite[Eq. (3.86)]{kuusi2013mingione}, we now take $\tilde{\gamma}=2^{-(n+2)/q}\gamma$. We also remark that for a strongly convergent sequence in $L^p$ for some $p\geq 1$, the $L^q$-mean oscillation of the sequence also converges for any $q\in(0,1)$. Indeed, by the triangle inequality, we have
\begin{align*}
    &(\phi_q(Dv_\epsilon,Q_\rho^\lambda))^q\leq \fint_{Q_\rho^\lambda}|Dv_\epsilon-\mathbf{m}(Dv,Q_\rho^\lambda)|^q\,dxdt \\&\leq (\phi_q(Dv,Q_\rho^\lambda))^q+\fint_{Q_\rho^\lambda}|Dv_\epsilon-Dv|^q\,dxdt.
\end{align*}
Switching $Dv_\epsilon$ with $Dv$, we know that
$$
|(\phi_q(Dv_\epsilon,Q_\rho^\lambda))^q-(\phi_q(Dv,Q_\rho^\lambda))^q|\leq \fint_{Q_\rho^\lambda}|Dv_\epsilon-Dv|^q\,dxdt.
$$
Therefore if $Dv_\epsilon \to Dv$ strongly in $L^p(Q_\rho^\lambda)$, then by H\"older's inequality and the previous estimate, we have
$\phi_q(Dv_\epsilon,Q_\rho^\lambda)\to \phi_q(Dv,Q_\rho^\lambda)$.
\end{proof}

\begin{proof}[Proof of Theorem \ref{thm:iter2}]
The proof also closely follow that of \cite[Theorem 3.5]{kuusi2013mingione} and therefore we only give an outline and indicate the necessary modifications.
As in the proof of Theorem \ref{thm:iter1}, we assume $s>0$ and use the singular iteration. The case $s=0$ follows by applying the same approximation scheme as in the proof of Theorem \ref{thm:iter1}. We still define $\tilde{m}$ as in \eqref{def:tm}. However, in this time since \eqref{eq:3.501} is no longer in force, the iteration may stop at $\tilde{m}$ either because the nonsingular alternative occurs or because the upper bound
\begin{equation*}
      s+\sup_{Q_{R_{\tilde{m}}}^{\lambda_{\tilde{m}}}} \|Dv\|\leq A \lambda_{\tilde{m}}
\end{equation*}
does not hold.
In the former case, we proceed as in Theorem \ref{thm:iter1} and we can take $\delta_\gamma$ as in \eqref{eq:delta2}.
In the latter case, exactly as in Step 4 of the proof of \cite[Theorem 3.5]{kuusi2013mingione}, we can show that
$$
\sup_{Q_{R_{\tilde{m}}}^{\lambda_{\tilde{m}}}} \|Dv\|\leq A \lambda_{\tilde{m}}\quad \text{and} \quad (\eta_1-\eta)A\lambda_{\tilde{m}}\leq s\leq \frac{A}{\eta_1} \lambda_{\tilde{m}}.
$$
Applying Proposition \ref{prop:iter2} in ${Q_{R_{\tilde{m}}}^{\lambda_{\tilde{m}}}}$ with $\xi=(\eta_1-\eta)A$ and $\xi_1=1/\eta_1$, we obtain
\begin{equation}\label{eq3.511}
     \phi_q(Dv, Q_{\delta R_{\tilde{m}}}^{\lambda_{\tilde{m}}})\leq \tilde{c}_d' \delta^{\beta_1'} \phi_q(Dv,Q_{R_{\tilde{m}}}^{\lambda_{\tilde{m}}})
\end{equation}
for any $\delta\in(0,1)$, where the constant $\tilde{c}_d'\geq 1$ depends on $n$, $p$, $\nu$, $L$, $A$, $q$ and the constant $\beta_1'\in(0,1)$ depends only on $n$, $p$, $\nu$, $L$, $A$. With the estimate \eqref{eq3.511} in place of \eqref{eq3.141}, we can proceed as in Theorem \ref{thm:iter1}, Step 4. In this case, \eqref{eq3.202} holds for any
\begin{equation*}
 \delta_\gamma:=\delta \underline{\delta},\quad \text{with}\quad \delta\leq \Big(\frac{\gamma}{\tilde{c}_d'}\Big)^\frac{1}{\beta_1'}\Big( \underline{\delta}^{n+2}\sigma_1^{m(n+2)}\eta_1^{m(2-p)}\Big)^\frac{1}{q\beta_1
 '} .
\end{equation*}
Finally, to ensure the above inequality and \eqref{eq:delta2}, we can choose
\begin{equation}\label{delta3}
    \delta_\gamma:=\underline{\delta} \Big( \underline{\delta}^{n+2}\sigma_1^{m(n+2)}\eta_1^{m(2-p)}\Big)^\frac{1}
    {\min\{\beta,\beta_1'\}q} \Big( \frac{\gamma}{c_d+\tilde{c}_d'}\Big)^\frac{1}{\min\{\beta,\beta_1'\}},
\end{equation}
where $\underline{\delta}=(\eta_1^{(2-p)/2} \sigma_1)^{m+1}$.
Since the constants $\sigma_1$, $\eta_1$, $\beta$, $\beta_1'$ depend only on $n$, $p$, $\nu$, $L$, $A$, and the constants $c_d$, $\tilde{c}_d'$ depend only on $n$, $p$, $\nu$, $L$, $A$, $q$, we know that the only constant depending on $B$ in \eqref{delta3} is $m$.
From the characterization of $m$ described in \eqref{ineq:stopping}, we can take $\delta_\gamma$ in the form of \eqref{eq3.203} by further decreasing the constant in \eqref{delta3}. More specifically, we can take
$$
\alpha_1:=\frac{\min\{\beta,\beta_1'\}q}{(n+5)+(2n+5)
\log(\sigma_1)/\log(\eta_1)},
\quad \alpha_2:=\min\{\beta,\beta_1'\},$$
and
$$
c(A):=(2A)^{1/\alpha_1}\eta_1^{p-2}\sigma_1^{-2} \Big(\frac{c_d+\tilde{c}_d'}{\eta_1^{2(n+3)/q}\sigma_1^{3(n+2)/q}}\Big)^\frac{1}{\alpha_2}.
$$
The proof is now completed.
\end{proof}

\section{Pointwise gradient estimates}\label{sec4}
This section is devoted to the proofs of the pointwise gradient estimates. We derive some comparison results in Section \ref{sec4.1} and give the proofs of Theorem \ref{thm:r1}--Corollary \ref{cor:1} in Section \ref{sec4.2}.
\subsection{Comparison results}\label{sec4.1}

Let $\rho,\lambda>0$ and $Q_\rho^\lambda:=Q_{\rho}^\lambda(x_0,t_0)\subset\Omega_T$ be a parabolic cylinder.
We consider the unique solution $w\in C^0([t_0-\lambda^{2-p}\rho^2, t_0);\, L^2(B_\rho(x_0)))\cap L^p(t_0-\lambda^{2-p}\rho^2, t_0;\, W^{1,p}(B_\rho(x_0)))$ to the Cauchy-Dirichlet problem:
\begin{equation}\label{eqa:w}
    \left\{
\begin{aligned}
     w_t-\div(a(x,t,D w)) =&  0 \quad \text{in} \quad Q_{\rho}^{\lambda},  &\\
     w =&  u \quad \text{on}\quad \partial_{\text{par}} Q_{\rho}^{\lambda}. &\\
\end{aligned}
\right.
\end{equation}
We have the following comparison estimate between $u$ and $w$ from \cite[Lemma 3.1]{park2020regularity}.
\begin{lemma}\label{lem:u-w}
Let $w$ be a solution to \eqref{eqa:w} under the assumptions \eqref{ineq:growth}--\eqref{dini} and assume that $p\in (\frac{3n+2}{2n+2},2-\frac{1}{n+1}]$. Then there exists a constant $c_3=c_3(n,p,\nu,L,q)\geq 1$ such that
    \begin{align*}
         &\left(\fint_{Q_{\rho}^{\lambda}}|D u-D w|^{q}\,dxdt \right)^{1/q}\\
         &\leq c_3\left[\frac{|\mu|(Q_{\rho}^{\lambda})}{|Q_{\rho}^{\lambda}|^\frac{n+1}{n+2}} \right]^\frac{n+2}{(n+1)p-n}+c_3\left[\frac{|\mu|(Q_{\rho}^{\lambda})}{|Q_{\rho}^{\lambda}|^\frac{n+1}{n+2}} \right]\Big(\fint_{Q_{\rho}^{\lambda}}(|D u|+s)^q\,dxdt\Big)^\frac{(2-p)(n+1)}{q(n+2)}
    \end{align*}
for any constant $q$ such that $\frac{n+2}{2(n+1)}<q<p-\frac{n}{n+1}\leq 1$.
\end{lemma}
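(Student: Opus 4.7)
The plan is to adapt the classical Boccardo--Gallou\"et truncation method to the parabolic singular setting, following the scheme underlying \cite[Lemma 3.1]{park2020regularity}. Set $z := u - w$; this function solves
\[
z_t - \div(a(x,t,Du) - a(x,t,Dw)) = \mu \quad \text{in } Q_\rho^\lambda,
\]
and vanishes on the parabolic boundary $\partial_{\text{par}} Q_\rho^\lambda$.

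For each level $k > 0$, I would test the equation (after a standard Steklov-average regularization in time) against the truncation $T_k(z) := \max\{-k,\min\{k,z\}\}$. The time term produces a nonnegative contribution at $t_0$, the diffusion term couples to $\langle a(x,t,Du) - a(x,t,Dw), Dz\rangle\chi_{\{|z|\leq k\}}$, and the right-hand side is bounded by $k\,|\mu|(Q_\rho^\lambda)$. Combining with the monotonicity inequality
\[
\langle a(x,t,\xi)-a(x,t,\eta),\xi-\eta\rangle \geq c\,(s^2+|\xi|^2+|\eta|^2)^{(p-2)/2}|\xi-\eta|^2
\]
implied by \eqref{ineq:elliptic} yields the weighted energy bound
\[
\int_{Q_\rho^\lambda \cap \{|z|\leq k\}}(s+|Du|+|Dw|)^{p-2}|Dz|^2\,dxdt \leq C\,k\,|\mu|(Q_\rho^\lambda).
\]

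To pass from this weighted $L^2$ estimate to an $L^q$ bound on $|Dz|$, I would apply H\"older's inequality with exponents $2/q$ and $2/(2-q)$, which isolates the degenerate weight at the cost of the factor $\bigl(\fint_{Q_\rho^\lambda}(|Du|+|Dw|+s)^{q(2-p)/(2-q)}\,dxdt\bigr)^{(2-q)/2}$; this is precisely the source of the gradient-correction term in the second summand of the conclusion. The restriction $q<p-\tfrac{n}{n+1}$ keeps $q(2-p)/(2-q)$ below the natural integrability exponent, while $q>\tfrac{n+2}{2(n+1)}$ is what the parabolic Gagliardo--Nirenberg step demands. To control the tail $\{|z|>k\}$, I would use the parabolic Sobolev embedding applied to $T_k(z)$ together with the $L^p$-energy bound $\int|DT_k(z)|^p \leq C\,k\,|\mu|(Q_\rho^\lambda)$ coming from the same testing procedure. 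Optimizing $k$ as a power of the level $t$ in a Marcinkiewicz-style distribution-function estimate for $|Dz|$ then delivers the two terms of the lemma: the first, $[|\mu|(Q_\rho^\lambda)/|Q_\rho^\lambda|^{(n+1)/(n+2)}]^{(n+2)/((n+1)p-n)}$, reflects the natural parabolic scaling of the capacitary estimate, and the second encodes the price paid for the singular weight.

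The main obstacle is precisely this singular weight $(s+|Du|+|Dw|)^{p-2}$, which blows up on the set where gradients are small; this is why the superquadratic-style argument degrades and one cannot avoid the H\"older split above. The delicate part is to verify that the exponents arising from H\"older, parabolic Sobolev, and Gagliardo--Nirenberg interpolation are simultaneously admissible, and it is exactly the hypothesis $\tfrac{n+2}{2(n+1)} < q < p-\tfrac{n}{n+1}$---equivalently the restriction $p>\tfrac{3n+2}{2n+2}$ appearing in the lemma---that makes the optimization close.
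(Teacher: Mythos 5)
The paper does not prove this lemma. It is imported verbatim from \cite[Lemma 3.1]{park2020regularity}, with a short remark that the case $s>0$ follows from the same argument after replacing $V(\xi)=|\xi|^{(p-2)/2}\xi$ by $V_s(\xi)=(|\xi|^2+s^2)^{(p-2)/4}\xi$. So there is no in-paper proof to compare against; your proposal is a reconstruction of the cited argument.

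Your outline correctly identifies the Boccardo--Gallou\"et truncation scheme underlying the cited result, and the weighted energy bound
$\int_{Q_\rho^\lambda\cap\{|z|\le k\}}(s+|Du|+|Dw|)^{p-2}|Dz|^2\,dxdt\le Ck\,|\mu|(Q_\rho^\lambda)$
together with the H\"older split with exponents $2/q$ and $2/(2-q)$ is the right way to access an $L^q$ bound on $Dz$ restricted to a sublevel set. However, there is a genuine gap in your treatment of the tail. You claim that ``the $L^p$-energy bound $\int|DT_k(z)|^p\le Ck\,|\mu|(Q_\rho^\lambda)$ comes from the same testing procedure.'' That is false for $p<2$. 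Since $|Dz|\le |Du|+|Dw|\lesssim s+|Du|+|Dw|$ and $p-2<0$, pointwise one has
\begin{equation*}
(s^2+|Du|^2+|Dw|^2)^{(p-2)/2}|Dz|^2 \;\lesssim\; |Dz|^p,
\end{equation*}
so the weighted $L^2$ energy bound produced by testing is \emph{strictly weaker} than the unweighted $L^p$ bound. One cannot pass from the former to the latter without an a priori $L^p$ control on $s+|Du|+|Dw|$, which is not available here (the hypotheses and the lemma itself only invoke $L^q$ averages with $q<1$). You have silently used a step that is valid only in the degenerate range $p\ge 2$. In the singular case the tail $\{|z|>k\}$ has to be handled instead by combining the weighted $L^2$ bound with the $L^\infty_t L^2_x$ estimate on $T_k(z)$ (from the time term) through a parabolic Gagliardo--Nirenberg interpolation at a carefully chosen sub-$p$ integrability exponent; this is precisely where the constraint $q>\frac{n+2}{2(n+1)}$, equivalently $p>\frac{3n+2}{2n+2}$, enters, and it is also why the exponent on $\fint(|Du|+s)^q$ in the conclusion is $\frac{(2-p)(n+1)}{q(n+2)}$ rather than the $\frac{2-p}{2q}$ that your H\"older split alone would produce. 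As written, your argument does not close. (A minor additional point: you describe the obstacle as the weight $(s+|Du|+|Dw|)^{p-2}$ ``blowing up where gradients are small'' --- that regime is actually favorable; the real difficulty is the degeneracy of the weight where the gradients are large, which is exactly why the weighted $L^2$ bound under-controls $|Dz|$.)
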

We remark that only the case $s=0$ was considered in \cite[Lemma 3.2]{park2020regularity}, but their proof also works in the case $s>0$ with $V_s(\xi):=(|\xi|^2+s^2)^\frac{p-2}{4} \xi$ in place of $V(\xi):=|\xi|^\frac{p-2}{2} \xi$ for $\xi\in \R^n$.

We also have a reverse H\"older type inequality for $Dw$.
\begin{theorem}\label{rev}
Let $\lambda>0$ and $w$ be a solution to \eqref{eqa:w} under the assumptions \eqref{ineq:growth} and \eqref{ineq:elliptic} and assume that $\max\{1,\frac{2n}{n+2}\}<p<2$. Then there exists a constant $c_4=c_4(n,p,\nu,L,q)\geq 1$ such that
\begin{equation}\label{eq:rev}
\fint_{\frac{1}{2}Q_{\rho}^{\lambda}}(|D w|+s)^p\,dxdt \leq c_4\lambda^p+c_4s^p+c_4\lambda^\frac{n(p-2)(p-q)}{n(p-2)+2q}\Big(\fint_{Q_{\rho}^{\lambda}}(|D w|+s)^q\,dxdt\Big)^\frac{n(p-2)+2p}{n(p-2)+2q}
\end{equation}
holds for every $q\in(\frac{n(2-p)}{2},\frac{2n}{n+2})$ when $n\geq 2$, and that
\begin{equation}\label{eq:rev2}
\fint_{\frac{1}{2}Q_{\rho}^{\lambda}}(|D w|+s)^p\,dxdt \leq c_4\lambda^p+c_4s^p+c_4\lambda^\frac{(p-2)(p-q)}{p+q-2}\Big(\fint_{Q_{\rho}^{\lambda}}(|D w|+s)^q\,dxdt\Big)^\frac{2p-2}{p+q-2}
\end{equation}
holds for every $q\in(2-p,1)$ when $n=1$.
\end{theorem}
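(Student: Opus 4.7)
My plan is to follow the classical Caccioppoli--Sobolev--Poincar\'e scheme, adapted to the intrinsic geometry of $Q_\rho^\lambda$. First, I would test the weak formulation of \eqref{eqa:w} against $\phi^2(w-k)$ for a smooth parabolic cutoff $\phi$ equal to $1$ on $\tfrac{1}{2}Q_\rho^\lambda$ and a free parameter $k\in\R$. Using the ellipticity \eqref{ineq:elliptic} and growth \eqref{ineq:growth} (together with a standard Steklov averaging to make the computation rigorous in the singular range $p<2$), I would derive a shifted Caccioppoli estimate of the form
\begin{align*}
&\sup_{t}\fint_{B_{\rho/2}}\frac{|w-k|^2}{\lambda^{2-p}\rho^2}\,dx+\fint_{\tfrac{1}{2}Q_\rho^\lambda}(|Dw|+s)^p\,dxdt\\
&\quad\leq c\fint_{Q_\rho^\lambda}\Big|\frac{w-k}{\rho}\Big|^p\,dxdt+cs^p,
\end{align*}
the prefactor $\lambda^{2-p}\rho^2$ being the intrinsic time scale of $Q_\rho^\lambda$.

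Next I would convert the spatial oscillation on the right-hand side into a lower-integrability gradient quantity. For $n\geq 2$ I would combine the parabolic Sobolev embedding
\[
\fint_{Q_\rho^\lambda}|w-k|^{q(n+2)/n}\,dxdt\leq c\,\rho^q\Big(\sup_t\rho^{-n}\!\!\int_{B_\rho}|w-k|^2\,dx\Big)^{q/n}\fint_{Q_\rho^\lambda}|Dw|^q\,dxdt
\]
(applied with $k=(w)_{B_\rho}(t)$, the slicewise spatial mean) with H\"older's inequality and the sup--bound produced by Step~1, to obtain
\[
\fint_{Q_\rho^\lambda}\Big|\frac{w-k}{\rho}\Big|^p\,dxdt\leq c\,\lambda^{\kappa}\Big(\fint_{Q_\rho^\lambda}(|Dw|+s)^q\Big)^{\theta}\mathcal{S}^{1-\theta},
\]
where $\mathcal{S}$ denotes the $\sup$-quantity from the Caccioppoli estimate; the exponents $\theta=\frac{n(p-2)+2p}{n(p-2)+2q}$ and $\kappa=\frac{n(p-2)(p-q)}{n(p-2)+2q}$ are forced by requiring the two sides to scale identically under the intrinsic dilation $(x,t)\mapsto(\rho x,\lambda^{2-p}\rho^2 t)$, which is in fact the cleanest way to discover them. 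For $n=1$ I would replace the parabolic Sobolev step by the one--dimensional embedding $W^{1,q}(B_\rho)\hookrightarrow L^\infty(B_\rho)$, valid for every $q>1$; this yields the different exponents in \eqref{eq:rev2}.

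Plugging the interpolation into the Caccioppoli estimate and absorbing $\mathcal{S}$ via Young's inequality then produces the reverse H\"older bound. The isolated $\lambda^p$ term in \eqref{eq:rev} and \eqref{eq:rev2} covers the trivial alternative in which $|w-k|\lesssim\lambda\rho$ already holds pointwise, so that the interpolation step is unnecessary.

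The main technical obstacle is Step~2: the window $q\in\bigl(n(2-p)/2,\,2n/(n+2)\bigr)$ for $n\geq 2$ (respectively $q\in(2-p,1)$ for $n=1$) is exactly the range in which the interpolation closes. The lower bound $q>n(2-p)/2$ (resp.\ $q>2-p$) keeps the denominator $n(p-2)+2q$ (resp.\ $p+q-2$) positive, so that $\theta$ is well defined; the upper bound is the natural Sobolev compatibility; and the standing assumption $p>2n/(n+2)$ in the hypothesis is precisely what makes this interval nonempty. A secondary technical point is the time--varying shift $k=(w)_{B_\rho}(t)$, which generates extra boundary terms in Step~1 that must be controlled by a careful integration by parts --- standard but delicate in the singular range.
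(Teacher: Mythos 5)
Your overall strategy — Caccioppoli testing, then a Sobolev--Poincar\'e step to trade $w$-oscillation for gradient integrability, then interpolation and absorption by Young — is indeed the skeleton of the paper's proof. But there is a genuine gap in Step~2 that the paper is specifically engineered to avoid.

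You propose to apply the parabolic Gagliardo--Nirenberg embedding
\[
\fint_{Q_\rho^\lambda}|w-k|^{q(n+2)/n}\,dxdt\leq c\,\rho^q\Big(\sup_t\rho^{-n}\!\!\int_{B_\rho}|w-k|^2\,dx\Big)^{q/n}\fint_{Q_\rho^\lambda}|Dw|^q\,dxdt
\]
directly with the exponent $q$ appearing in the statement. However, in every admissible case here we have $q<1$: for $n\geq 2$ the range $q\in(n(2-p)/2,2n/(n+2))$ forces $q<1$ whenever $n=2$ and in the applications the paper always takes $q<p-n/(n+1)\leq 1$, and for $n=1$ the stated range is $q\in(2-p,1)$, yet you explicitly invoke ``$W^{1,q}(B_\rho)\hookrightarrow L^\infty(B_\rho)$, valid for every $q>1$.'' Sobolev and Gagliardo--Nirenberg inequalities with gradient measured in $L^q$ simply fail when $q<1$: mollifying a step-like profile with transitions of width $\epsilon$ makes $\|Du\|_{L^q}\sim \epsilon^{1-q}\to 0$ while $\|u\|_{L^r}$ and $\|u\|_{L^2}$ stay bounded below, which destroys any such bound. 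Deriving the exponents $\theta$ and $\kappa$ ``by scaling'' is a consistency check, not a proof, and here the underlying inequality is false for the relevant exponents.

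The paper sidesteps this in two places. First, the Sobolev--Poincar\'e step is applied only at the exponent $2n/(n+2)\geq 1$ (and $q=1$ when $n=1$): after testing, the right-hand side of the Caccioppoli-type estimate involves $\bigl(\int_Q|Dw|^{2n/(n+2)}\bigr)^{(n+2)/n}$ and $\bigl(\int_Q|Dw|^{p-1}\bigr)^2$, never an $L^q$ gradient norm with $q<1$. Only \emph{then} does the proof interpolate from these exponents down to $q$, but via H\"older/Jensen on a fixed cylinder, i.e.\
\[
\int_Q|Dw|^{\frac{2n}{n+2}}\leq\Big(\int_Q|Dw|^p\Big)^{\alpha}\Big(\int_Q|Dw|^q\Big)^{1-\alpha},
\]
which is valid for every $q>0$ because no derivative is involved; the condition $q>n(2-p)/2$ (resp.\ $q>2-p$) is precisely what makes $\alpha\cdot\tfrac{n+2}{n}<1$ so that Young's inequality can reabsorb the $\int_Q|Dw|^p$ factor. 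Second, the test function: a $\varphi^2$ cutoff does not close the Young estimate of the convection term $a(x,t,Dw)\cdot D\varphi$ in the singular range, because that estimate requires the cutoff power $\tilde p$ to satisfy $\tfrac{2}{p}(\tilde p - p)\geq\tilde p$, i.e.\ $\tilde p\geq 2p/(2-p)$, which is much larger than $2$ when $p$ is near $2$; the paper accordingly tests with $\varphi^{\tilde p}$, $\tilde p=\max\{2p/(2-p),(n+2)/n\}$, after writing the equation in terms of the time-dependent weighted mean $\bar w(t)=\int w\zeta$ and accounting for the additional drift $g(t)=\tfrac{d}{dt}\bar w(t)$. So the fix for your approach is: raise the cutoff power, keep the Poincar\'e step at exponent $\geq 1$, and do the passage to $L^q$ with $q<1$ only by H\"older interpolation of gradient integrals, never by a Sobolev inequality.
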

\begin{proof}
This type of estimates can be deduced from higher integrability results as in \cite[Theorem 1]{bogelein2008higher}  and  standard interpolation and iteration arguments as in \cite[Remark 6.12]{giusti2003direct}. See also \cite[Lemma 3.3]{park2020regularity}. For completeness, we give a direct proof below.
First, we note that it suffices to show \eqref{eq:rev} and \eqref{eq:rev2} for the special case when $\lambda=\rho=1,\,(x_0,t_0)=0$ by using a standard rescaling argument. Indeed, we can define $$
w_1(x,t):=(\lambda\rho)^{-1}w(x_0+\rho x, t_0+\lambda^{2-p}\rho^2 t)$$
and
$$a_1(x,t,\xi):=\lambda^{1-p} a(x_0+\rho x, t_0+\lambda^{2-p}\rho^2 t, \lambda \xi).$$
Then $w_1$ solves
$$\partial_t w_1 -\text{div} (a_1(x,t,Dw_1))=0 \quad \text{in} \quad Q^1_1(0)\equiv B_1(0)\times (-1,0)$$
and $a_1$ satisfies the assumptions \eqref{ineq:growth} and \eqref{ineq:elliptic} with $s_1:=\lambda^{-1}s$ in place of $s$. We can see that if \eqref{eq:rev} (or \eqref{eq:rev2}) holds for $w_1$ in $Q_1^1(0)$ with $s_1:=\lambda^{-1}s$ in place of $s$, then by rescaling back to $w$, \eqref{eq:rev} (or \eqref{eq:rev2}) also holds for $w$ in $Q^\lambda_\rho$.

Next, we prove \eqref{eq:rev} and \eqref{eq:rev2} for the case when $\lambda=\rho=1,\,(x_0,t_0)=0$.
For simplicity, in this proof,  we denote $B:=B_1(0)$, $Q:=Q^1_1(0)\equiv B(0,1)\times (-1,0)$, and $\frac{1}{2}Q:=\frac{1}{2}Q^1_1(0)\equiv B_{\frac{1}{2}}(0)\times (-\frac{1}{4},0)$.
First we take $\zeta_1\in C^\infty_0(B)$ such that $0\leq \zeta_1 \leq 1$ in $B$, $\zeta_1=1$ in $\frac{1}{2} B$ and $|D\zeta_1|\leq 4$ in $B$. We then define $\zeta:=\zeta_1/(\int_{B} \zeta_1(x)\, dx)$ and therefore $\int_{B} \zeta(x) \,dx=1$. Now we define
$\bar{w}(t):=\int_{B} w(x,t)\zeta(x)\,dx$.  If $n\geq 2$, it follows from the Sobolev-Poinc\'are inequality that
\begin{equation}\label{so-po}
    \int_{B} |w(x,t)-\bar{w}(t)|^2\,dx \leq c \Big(\int_{B} |Dw(x,t)|^\frac{2n}{n+2}\,dx\Big)^\frac{n}{n+2}
\end{equation}
holds for some constant $c=c(n)$.  On the other hand, if $n=1$, then we have
\begin{equation}\label{so-po2}
    \int_{B} |w(x,t)-\bar{w}(t)|^2\,dx \leq |B|\sup_{B}|w-\bar{w}(t)|^2\leq c \Big(\int_{B} |Dw(x,t)|\,dx\Big)^2.
\end{equation}

We now proceed with the case when $n\geq 2$ and we will indicate necessary modifications for the case when $n=1$ at the end of our proof.
By \eqref{eqa:w}, we know that
\begin{equation*}
\frac{d}{dt} \bar{w}(t)= -\int_{B} a(x,t,Dw)D\zeta \,dx:=g(t)
\end{equation*}
holds in distribution sense and therefore the equation \eqref{eqa:w} also reads
\begin{equation}\label{eqa:w2}
    (w-\bar{w})_t-\text{div}( a(x,t,Dw))+g(t)=0 \quad \text{in} \quad Q.
\end{equation}
Note that by \eqref{ineq:growth}, we have
\begin{equation}\label{eq:intg}
\int_{-1}^0|g(t)|dt\leq c\int_{Q} (|Dw|^2+s^2)^\frac{p-1}{2}\,dxdt
\end{equation}
for some constant $c=c(n,p,L)$.
Next, we choose a nondecreasing smooth function $\eta:\R\to [0,1]$ satisfying $\eta\equiv 1$ on $[-1/4,\infty)$, $\eta\equiv 0$ on $(-\infty,-1]$ and $|\eta'|\leq 2$ in $\R$. We then take $\varphi$ as the product of $\zeta$ and $\eta$, namely, $\varphi(x,t)=\zeta(x)\eta(t)$ for every $(x,t)\in Q$. We also choose
$$ \tilde{p}:=\max\big\{\frac{2p}{2-p},\frac{n+2}{n}\big\}. $$
Now formally we test the equation \eqref{eqa:w2} with $\psi:=(w-\bar{w})\varphi^{\tilde{p}} {1}_{(-\infty,\tau]}(t)$ for $\tau\in (-1,0)$ and get
\begin{equation}\label{eq:test:rev}
\begin{aligned}
    0&= \frac{1}{2} \int_B (w(x,\tau)-\bar{w}(\tau))^2 (\varphi(x,\tau))^{\tilde{p}}\,dx- \frac{\tilde{p}}{2} \int_{B\times (-1,\tau]}(w-\bar{w})^2 \varphi^{\tilde{p}-1} \varphi_t \,dxdt
    \\&\quad+\tilde{p} \int_{B\times (-1,\tau]} a(x,t,Dw) (w-\bar{w})\varphi^{\tilde{p}-1} D\varphi \,dxdt
    \\&\quad+ \int_{B\times (-1,\tau]} a(x,t,Dw) Dw\,\varphi^{\tilde{p}} \,dxdt+\int_{B\times (-1,\tau]} (w-\bar{w})\varphi^{\tilde{p}}g(t)\,dxdt
    \\&:=\RN{1}+\RN{2}+\RN{3}+\RN{4}+\RN{5}.
\end{aligned}
\end{equation}
Note that the computations above can be justified in a standard way using Steklov averages as in \cite{dibenedetto1993degenerate}. Then we estimate the terms in \eqref{eq:test:rev}. By H\"older's inequality, Young's inequality with conjugate exponents $(\tilde{p},\,\tilde{p}/(\tilde{p}-1))$, and \eqref{so-po}, we have
\begin{equation}\label{eq:rn2}
\begin{aligned}
    &|\RN{2}|\leq \frac{\tilde{p}}{2} \int_{-1}^{\tau} \Big(\int_B (w-\bar{w})^2 \varphi^{\tilde{p}} \,dx\Big)^\frac{\tilde{p}-1}{\tilde{p}}\Big(\int_B (w-\bar{w})^2 \varphi_t^{\tilde{p}}\,dx\Big)^\frac{1}{\tilde{p}} dt
   \\&\leq \frac{\tilde{p}}{2}\sup_{t\in(-1,0)} \Big(\int_B (w-\bar{w}(t))^2 \varphi^{\tilde{p}} \,dx\Big)^\frac{\tilde{p}-1}{\tilde{p}}\int_{-1}^0
    \Big(\int_B (w-\bar{w})^2 \varphi_t^{\tilde{p}}\,dx\Big)^\frac{1}{\tilde{p}}dt
    \\&\leq \frac{1}{8} \sup_{t\in(-1,0)} \int_B (w-\bar{w}(t))^2 \varphi^{\tilde{p}} \,dx +c\, \Big(\int_{-1}^0
    \Big(\int_B (w-\bar{w})^2 \,dx\Big)^\frac{1}{\tilde{p}}dt\Big)^{\tilde{p}}
    \\&\leq \frac{1}{8} \sup_{t\in(-1,0)} \int_B (w-\bar{w}(t))^2 \varphi^{\tilde{p}} \,dx +c \, \Big(\int_{-1}^0
    \Big(\int_B |Dw|^\frac{2n}{n+2} \,dx\Big)^\frac{n+2}{n\tilde{p}}dt\Big)^{\tilde{p}}
    \\&\leq \frac{1}{8} \sup_{t\in(-1,0)} \int_B (w-\bar{w}(t))^2 \varphi^{\tilde{p}} \,dx +c \, \Big(\int_Q
    |Dw|^\frac{2n}{n+2} \,dxdt\Big)^{\frac{n+2}{n}}
\end{aligned}
\end{equation}
for some constant $c=c(n,p)$. Here we also used the fact that $\frac{n+2}{n\tilde{p}}\leq 1$ in the last line.

By \eqref{ineq:growth}, Young's inequality with exponents $(p/(p-1),\,p)$ and $(2/p, \,2/(2-p)$, and the fact that $\frac{2}{p}(\tilde{p}-p)\geq\tilde{p}$, we have
\begin{equation}\label{eq:rn3}
\begin{aligned}
    &|\RN{3}|\leq \tilde{p}L\int_{B\times(-1,\tau]} (s^2+|Dw|^2)^\frac{p-1}{2} |w-\bar{w}|\varphi^{\tilde{p}-1} |D\varphi|\,dxdt
    \\&\leq \frac{\nu}{2} \int_{B\times(-1,\tau]} (s^2+|Dw|^2)^\frac{p}{2} \varphi^{\tilde{p}}\,dxdt+c \int_{B\times(-1,\tau]} \varphi^{\tilde{p}-p} |w-\bar{w}|^p |D\varphi|^p \,dxdt
    \\&\leq \frac{\nu}{2} \int_{B\times(-1,\tau]} (s^2+|Dw|^2)^\frac{p}{2} \varphi^{\tilde{p}}\,dxdt+\frac{1}{8} \int_{B\times(-1,\tau]} |w-\bar{w}|^2 \varphi^{\tilde{p}}\,dxdt
    \\ &\quad+c \int_{B\times(-1,\tau]}|D\varphi|^\frac{2p}{2-p}\,dxdt
    \\&\leq \frac{\nu}{2} \int_{B\times(-1,\tau]} |Dw|^p\varphi^{\tilde{p}}\,dxdt +\frac{1}{8} \sup_{t\in(-1,0)} \int_B (w-\bar{w}(t))^2 \varphi^{\tilde{p}} \,dx + c(1+s^p)
\end{aligned}
\end{equation}
for some constant $c=c(n,p,\nu,L)$. Here $\nu$ is the constant in \eqref{ineq:elliptic}.

By \eqref{ineq:elliptic}, we know that $a(x,t,\xi)\xi\geq \nu (s^2+|\xi|^2)^\frac{p-2}{2} |\xi|^2\geq \nu (\xi^p-s^p)$ for every $\xi\in\R^n$ and therefore
\begin{equation}\label{eq:rn4}
    \RN{4}\geq \nu \int_{B\times(-1,\tau]} |Dw|^p \varphi^{\tilde{p}}\,dxdt-c s^p
\end{equation}
for some constant $c=c(n)$.

Finally, by H\"older's inequality, Young's inequality with conjugate exponents $2$ and $2$, and the estimate \eqref{eq:intg}, we obtain
\begin{equation}\label{eq:rn5}
\begin{aligned}
    &|\RN{5}|\leq \int_{-1}^\tau \Big(\int_B |w-\bar{w}|^2 \varphi^{\tilde{p}}\,dx\Big)^\frac{1}{2} \Big(\int_B |g(t)|^2 \varphi^{\tilde{p}} \,dx\Big)^\frac{1}{2} dt
    \\&\leq c \sup_{t\in(-1,0)} \Big(\int_B (w-\bar{w}(t))^2 \varphi^{\tilde{p}} \,dx\Big)^{\frac{1}{2}} \int_{-1}^\tau |g(t)|dt
    \\&\leq \frac{1}{8} \sup_{t\in(-1,0)} \int_B (w-\bar{w}(t))^2 \varphi^{\tilde{p}} \,dx+c\Big(\int_{Q} |Dw|^{p-1}\,dxdt\Big)^2 +c (1+s^p).
\end{aligned}
\end{equation}
for some constant $c=c(n,p,L)$. Here we also used the fact that $2(p-1)\leq p$ and therefore $s^{2(p-1)}\leq 1+s^p$ in the last inequality.
Using the estimates \eqref{eq:rn2}--\eqref{eq:rn5} in \eqref{eq:test:rev},
we get
\begin{align*}
    &\frac{1}{2} \int_B (w(x,\tau)-\bar{w}(\tau))^2 (\varphi(x,\tau))^{\tilde{p}}\,dx+\frac{\nu}{2} \int_{B\times(-1,\tau]} |Dw|^p \varphi^{\tilde{p}}\,dxdt\\
    &\leq \frac{3}{8} \sup_{t\in(-1,0)} \int_B (w-\bar{w}(t))^2 \varphi^{\tilde{p}} \,dx
    +c \, \Big(\int_Q
    |Dw|^\frac{2n}{n+2} \,dxdt\Big)^{\frac{n+2}{n}}
    \\& +c \,\Big(\int_{Q} |Dw|^{p-1}\,dxdt\Big)^2+c(1+s^p)
\end{align*}
for another constant $c=c(n,p,\nu,L)$.
By taking the supremum over $\tau\in(-1,0)$, we have
\begin{align*}
    &\frac{1}{8} \sup_{t\in(-1,0)} \int_B (w-\bar{w}(t))^2 \varphi^{\tilde{p}} \,dx
    \\&\leq c \, \Big(\int_Q
    |Dw|^\frac{2n}{n+2} \,dxdt\Big)^{\frac{n+2}{n}}
     +c \,\Big(\int_{Q} |Dw|^{p-1}\,dxdt\Big)^2+c(1+s^p),
\end{align*}
and therefore
\begin{equation}\label{eq:rev3}
\begin{aligned}
    &\int_{\frac{1}{2}Q} |Dw|^p \,dxdt \leq c\int_{B\times(-1,0)} |Dw|^p \varphi^{\tilde{p}}\,dxdt
    \\&\leq c \, \Big(\int_Q
    |Dw|^\frac{2n}{n+2} \,dxdt\Big)^{\frac{n+2}{n}}
     +c \,\Big(\int_{Q} |Dw|^{p-1}\,dxdt\Big)^2+c(1+s^p)
    \end{aligned}
\end{equation}
for some constant $c=c(n,p,\nu,L)$.
Next, we interpolate the terms in \eqref{eq:rev3}. We recall that we assume $p\in(\frac{2n}{n+2},2)$ and $q\in(\frac{n(2-p)}{2}, \frac{2n}{n+2})$ for $n\geq 2$.  Since $q<\frac{2n}{n+2}<p$, by H\"older's inequality, we have
\begin{equation}\label{interpolation1}
    \Big(\int_Q
    |Dw|^\frac{2n}{n+2} \,dxdt\Big)^{\frac{n+2}{n}}\leq \Big(\int_Q
    |Dw|^p \,dxdt\Big)^{\alpha\frac{n+2}{n}}\Big(\int_Q
    |Dw|^q \,dxdt\Big)^{(1-\alpha)\frac{n+2}{n}},
\end{equation}
where $\alpha\in(0,1)$ is a constant such that $\alpha p+(1-\alpha)q=\frac{2n}{n+2}$. Namely,
$$\alpha=(\frac{2n}{n+2}-q)/(p-q).$$ Since $q>\frac{n(2-p)}{2}$, we know that $\alpha \frac{n+2}{n}<1$. Therefore by \eqref{interpolation1} and Young's inequality with exponents $1/(\alpha \frac{n+2}{n})$ and $1/(1-\alpha \frac{n+2}{n})$, we obtain
\begin{equation}\label{interpolation1-1}
     \Big(\int_Q
    |Dw|^\frac{2n}{n+2} \,dxdt\Big)^{\frac{n+2}{n}}\leq \epsilon \int_Q
    |Dw|^p \,dxdt + c_\epsilon \Big(\int_Q
    |Dw|^q \,dxdt\Big)^\frac{n(p-2)+2p}{n(p-2)+2q}
\end{equation}
for any $\epsilon\in(0,1)$, where $c_\epsilon$ is a constant depending only on $\epsilon$, $n$, $p$, and $q$.
We then estimate the term $\Big(\int_{Q} |Dw|^{p-1}\,dxdt\Big)^2$.
Since $p-1<1\leq \frac{2n}{n+2}$, by H\"older's inequality, we have
\begin{equation}\label{interpolation4}
   \begin{aligned}
   &\Big(\int_{Q} |Dw|^{p-1}\,dxdt\Big)^2\leq c \Big(\int_{Q} |Dw|^\frac{2n}{n+2}\,dxdt\Big)^\frac{(p-1)(n+2)}{n}\\&\leq c\Big[1+ \Big(\int_Q
    |Dw|^\frac{2n}{n+2} \,dxdt\Big)^{\frac{n+2}{n}}\Big]
    \end{aligned}
\end{equation}
for some constant $c=c(n,p,q)$.
Therefore, it follows by using \eqref{interpolation1-1} and \eqref{interpolation4} in \eqref{eq:rev3}, and a standard iteration argument that
$$
\fint_{\frac{1}{2}Q}(|D w|+s)^p\,dxdt \leq c(1+s^p)+c\Big(\fint_{Q}(|D w|+s)^q\,dxdt\Big)^\frac{n(p-2)+2p}{n(p-2)+2q}
$$
holds for every $q\in(\frac{(2-p)n}{2},\frac{2n}{n+2})$ and some constant $c=c(n,p,\nu,L,q)$. Note that the last estimate is exactly \eqref{eq:rev} in the case when $\lambda=\rho=1,\,(x_0,t_0)=0$. The proof for the case when $n\geq 2$ is now completed. Finally, we briefly indicate the modifications needed for the case when $n=1$. In this case, we test the equation \eqref{eqa:w2} with $(w-\bar{w})\varphi^{\tilde{p}'} {1}_{(-\infty,\tau]}(t)$ for $\tau\in (-1,0)$, where
$$ \tilde{p}':=\max\big\{\frac{2p}{2-p},2\big\}.$$ Arguing exactly as in the case when $n\geq 2$, with the estimate \eqref{so-po2} in place of \eqref{so-po}, we obtain
\begin{equation}\label{eq:rev4}
    \int_{\frac{1}{2}Q} |Dw|^p \,dxdt \leq c \, \Big(\int_Q
    |Dw| \,dxdt\Big)^2
     +c \,\Big(\int_{Q} |Dw|^{p-1}\,dxdt\Big)^2+c(1+s^p).
\end{equation}
Finally, it follows from the last estimate and similar interpolation and iteration arguments as in the case when $n\geq 2$ that
$$
\fint_{\frac{1}{2}Q}(|D w|+s)^p\,dxdt \leq c(1+s^p)+c\Big(\fint_{Q}(|D w|+s)^q\,dxdt\Big)^\frac{2p-2}{p+q-2}
$$
holds for every $q\in(2-p,1)$ and some constant $c=c(p,\nu,L,q)$.
This proves \eqref{eq:rev2} in the case when $\lambda=\rho=1$ and $(x_0,t_0)=0$. The Theorem is now proved.
\end{proof}

\begin{remark}
It can be seen from the proof of Theorem \ref{rev} that reverse H\"older type estimates also holds for $q\in[\frac{2n}{n+2},p]$ if $n\geq 2$ and for $q\in[1,p]$ if $n=1$. Indeed, by \eqref{eq:rev3}, \eqref{eq:rev4}, H\"older's inequality, and similar rescaling arguments, there exists a constant $c=c(n,p,\nu,L,q)\geq 1$ such that
$$
\fint_{\frac{1}{2}Q_{\rho}^{\lambda}}(|D w|+s)^p\,dxdt \leq c\lambda^p+cs^p+c\lambda^{p-2}\Big(\fint_{Q_{\rho}^{\lambda}}(|D w|+s)^q\,dxdt\Big)^\frac{2}{q}
$$
holds for every $q\in[\frac{2n}{n+2},p]$ if $n\geq 2$, and for every $q\in[1,p]$ if $n=1$.
\end{remark}

Now we consider the unique solution $v\in C^0([t_0-\lambda^{2-p}\rho^2, t_0);\, L^2(B_\rho(x_0)))\cap L^p(t_0-\lambda^{2-p}\rho^2, t_0;\, W^{1,p}(B_\rho(x_0)))$ to the Cauchy-Dirichlet problem:
\begin{equation}\label{eqa:v}
    \left\{
\begin{aligned}
     v_t-\div(a(x_0,t,D v)) =&  0 \quad \text{in} \quad \frac{1}{2}Q_{\rho}^{\lambda},  &\\
     v =&  w \quad \text{on}\quad \partial_{\text{par}}\big( \frac{1}{2}Q_{\rho}^{\lambda}\big). &\\
\end{aligned}
\right.
\end{equation}
We have the following comparison result between $w$ and $v$.
\begin{lemma}\label{lem:w-v}
Let $w$ be a solution to \eqref{eqa:w} and $v$ be a solution to \eqref{eqa:v} under the assumptions \eqref{ineq:growth}--\eqref{dini}. Assume that $p\in(1,2)$. Then there exists a constant $c_5=c_5(n,p,\nu,L)\geq 1$, such that
\begin{equation}\label{eq:w-v1}
\fint_{\frac{1}{2}Q^\lambda_{\rho}} |Dw-Dv|^p \,dxdt \leq c_5 \big[\omega(\rho)\big]^p \fint_{\frac{1}{2}Q^\lambda_{\rho}}(|Dw|+s)^p \,dxdt.
\end{equation}
\end{lemma}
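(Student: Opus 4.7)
The plan is to test the weak form of the difference equation satisfied by $w-v$ with the admissible test function $w-v$ itself (which vanishes on $\partial_{\mathrm{par}}(\frac12 Q_\rho^\lambda)$), and then exploit the monotonicity of $a(x_0,t,\cdot)$ together with the Dini continuity assumption \eqref{ineq:osi}. A standard Steklov-averaging argument justifies the formal testing.

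First, I would write the equation for $w-v$ in $\tfrac12 Q_\rho^\lambda$ as
$$
(w-v)_t-\operatorname{div}\bigl(a(x_0,t,Dw)-a(x_0,t,Dv)\bigr)
=\operatorname{div}\bigl(a(x,t,Dw)-a(x_0,t,Dw)\bigr),
$$
test with $w-v$, and drop the nonnegative $L^2$-term at the final time. The monotonicity inequality for $p\in(1,2)$ (a consequence of \eqref{ineq:elliptic}) gives, for a constant $c=c(p,\nu)>0$,
$$
\bigl\langle a(x_0,t,Dw)-a(x_0,t,Dv),\,Dw-Dv\bigr\rangle \ge c\,(s^2+|Dw|^2+|Dv|^2)^{(p-2)/2}\,|Dw-Dv|^2.
$$
Using \eqref{ineq:osi} and $|x-x_0|<\rho$, the right-hand side is estimated by
$$
L\,\omega(\rho)\!\int_{\tfrac12 Q_\rho^\lambda}(s^2+|Dw|^2)^{(p-1)/2}\,|Dw-Dv|\,dxdt.
$$

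Next, I would apply Young's inequality with exponents $(2,2)$, writing the integrand as the product of $(s^2+|Dw|^2+|Dv|^2)^{(p-2)/4}|Dw-Dv|$ and $(s^2+|Dw|^2)^{(p-1)/2}(s^2+|Dw|^2+|Dv|^2)^{(2-p)/4}$, in order to absorb a small multiple of the monotonicity term on the left. This yields
$$
\int_{\tfrac12 Q_\rho^\lambda}\!(s^2+|Dw|^2+|Dv|^2)^{(p-2)/2}|Dw-Dv|^2\,dxdt
\le C\,[\omega(\rho)]^{2}\!\int_{\tfrac12 Q_\rho^\lambda}\!(s+|Dw|+|Dv|)^{p}\,dxdt,
$$
where I used $(s+|Dw|)^{2(p-1)}(s+|Dw|+|Dv|)^{2-p}\le (s+|Dw|+|Dv|)^{p}$.

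Then, to convert this ``$V$-type'' estimate into an $L^p$ estimate for $Dw-Dv$, I would use H\"older's inequality with exponents $(2/p,2/(2-p))$:
$$
\int |Dw-Dv|^{p}
\le \Bigl(\int (s^2+|Dw|^2+|Dv|^2)^{(p-2)/2}|Dw-Dv|^2\Bigr)^{p/2}
\Bigl(\int (s+|Dw|+|Dv|)^{p}\Bigr)^{(2-p)/2}.
$$
Combining these inequalities produces
$$
\int_{\tfrac12 Q_\rho^\lambda}|Dw-Dv|^{p}\,dxdt \le C\,[\omega(\rho)]^{p}\int_{\tfrac12 Q_\rho^\lambda}(s+|Dw|+|Dv|)^{p}\,dxdt.
$$
Finally, a standard energy estimate for $v$, obtained by testing the equation for $v$ with $v-w$, controls $\int |Dv|^p$ in terms of $\int(|Dw|^p+s^p)$ on $\tfrac12 Q_\rho^\lambda$. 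Substituting this bound and dividing by $|\tfrac12 Q_\rho^\lambda|$ yields \eqref{eq:w-v1}.

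The main obstacle is step three, namely choosing the Young's-inequality splitting and recombining the exponents in the $p\in(1,2)$ range so that the degenerate weight $(s^2+|Dw|^2+|Dv|^2)^{(p-2)/2}$ (which may blow up when $s=0$ and the gradients vanish) is effectively absorbed. The rest is technically routine but notationally delicate because $w_t,v_t$ live only in a dual space, so a Steklov regularization is needed to rigorously justify testing with $w-v$.
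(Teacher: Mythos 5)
Your proof is correct, but it takes a genuinely different route from the paper. The paper avoids the direct energy argument altogether: it cites the $V_s$-estimate
\begin{equation*}
\fint_{\frac12 Q_\rho^\lambda}|V_s(Dw)-V_s(Dv)|^2\,dxdt\le c\,[\omega(\rho/2)]^2\fint_{\frac12 Q_\rho^\lambda}(|Dw|+s)^p\,dxdt
\end{equation*}
from \cite[Remark 4.1]{kuusi2012new}, and then invokes the pointwise algebraic inequality $|Dw-Dv|\le c(|Dw|+s)^{(2-p)/2}|V_s(Dw)-V_s(Dv)|+c|V_s(Dw)-V_s(Dv)|^{2/p}$ followed by an $\omega$-weighted Young inequality with exponents $2/(2-p)$, $2/p$. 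The crucial feature of that pointwise inequality is that it produces $(|Dw|+s)^p$ directly on the right-hand side, with no $|Dv|$ term, so no further comparison is needed. Your approach, by contrast, re-derives the monotone ``$V$-type'' estimate from scratch but the Young splitting you propose (weighting by $(s^2+|Dw|^2+|Dv|^2)^{(2-p)/4}$) unavoidably leaves $(s+|Dw|+|Dv|)^p$ on the right, forcing the final extra step with the energy estimate $\int|Dv|^p\lesssim\int(|Dw|^p+s^p)$. That last estimate is indeed standard, though it really requires subtracting the two equations and testing with $v-w$ (the equation for $v$ alone does not close without the equation for $w$ to handle $\int w_t(v-w)$) — your phrasing glosses over this. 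Effectively you are re-proving the $V_s$-estimate in the cylinder $\frac12 Q_\rho^\lambda$ and then removing $|Dv|$ a posteriori, whereas the paper imports the $V_s$-estimate as a black box and removes $|Dv|$ via a purely pointwise argument. Both routes are valid; the paper's is shorter because it exploits the pointwise inequality, while yours is more self-contained.
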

\begin{proof}
By \cite[Remark 4.1]{kuusi2012new}, we know that
\begin{equation}\label{w-v3}
    \fint_{\frac{1}{2}Q^\lambda_\rho} |V_s(Dw)-V_s(Dv)|^2 \,dxdt \leq c \big[\omega(\rho/2)\big]^2 \fint_{\frac{1}{2}Q^\lambda_{\rho}}(|Dw|+s)^p \,dxdt,
\end{equation}
where $V_s(\xi):=(|\xi|^2+s^2)^\frac{p-2}{4} \xi$ for $\xi\in \R^n$ and
$c>0$ is a constant depending only on $n$, $p$, $\nu$, and $L$.
Also similar to \cite[Eq. (4.11)]{kuusi2012new}, we have the following inequality
\begin{equation*}
    |Dw-Dv|\leq c(|Dw|+s)^{(2-p)/2}|V_s(Dw)-V_s(Dv)|+c|V_s(Dv)-V_s(Dw)|^{2/p},
\end{equation*}
and therefore by Young's inequality with exponents $2/(2-p)$ and $2/p$, and the fact that $\omega(\rho)\in [0,1]$ for every $\rho>0$, we obtain
\begin{equation}\label{eq:well}
    |Dw-Dv|^p\leq c\big[\omega(\rho)\big]^p(|Dw|+s)^{p}+c\big[\omega(\rho)\big]^{p-2}|V_s(Dw)-V_s(Dv)|^2.
\end{equation}
Thus \eqref{eq:w-v1} follows from \eqref{w-v3}, \eqref{eq:well} and the fact that $\omega$ is a nondecreasing function.
\end{proof}
We also have a Lipschitz estimate for $v$ from \cite[Chapter 8, Theorem 5.2']{dibenedetto1993degenerate}
\begin{theorem}\label{lip}
Let $v$ be a solution to \eqref{eqa:v} under the assumptions \eqref{ineq:growth}--\eqref{dini}. Assume that $\max\{1,\frac{2n}{n+2}\}<p<2$ and that $q\in(\frac{(2-p)n}{2},p\,]$. Then there exists a constant $c_6=c_6(n,p,\nu,L,q)\geq 1$, such that
$$
\sup_{\frac{1}{4}Q^\lambda_{\rho}} \|Dv\| \leq c_6(\lambda+s)+c_6 \lambda^{\frac{n(p-2)}{n(p-2)+2q}} \Big(\fint_{\frac{1}{2}Q^\lambda_{\rho}}(|Dv|+s)^q \,dxdt\Big)^\frac{2}{n(p-2)+2q}.
$$
\end{theorem}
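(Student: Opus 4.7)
The plan is to derive the estimate for general $q \in (n(2-p)/2, p]$ from the standard Lipschitz bound in DiBenedetto's book (\cite[Chapter 8, Theorem 5.2']{dibenedetto1993degenerate}), which corresponds to the case $q=p$, via a self-improvement interpolation argument combined with an iteration on a family of nested intrinsic cylinders. After the routine rescaling $\tilde v(y,\tau) := (\lambda\rho)^{-1}v(x_0+\rho y,\, t_0+\lambda^{2-p}\rho^2 \tau)$ that maps $\frac 1 2 Q^\lambda_\rho$ to the standard cylinder $Q_{1/2}$, DiBenedetto's sup--$L^p$ estimate, rescaled back, can be written in the form
\begin{equation*}
  \sup_{\tau Q^\lambda_\rho}\|Dv\| \le c_*(\lambda+s) + \frac{c_*}{(\sigma-\tau)^{\kappa}}\lambda^{\frac{n(p-2)}{n(p-2)+2p}}\Big(\fint_{\sigma Q^\lambda_\rho}(|Dv|+s)^p\,dxdt\Big)^{\frac{2}{n(p-2)+2p}}
\end{equation*}
for any $1/4 \le \tau < \sigma \le 1/2$, with constants $c_*, \kappa$ depending only on $n, p, \nu, L$.

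Next, I would use the elementary interpolation $\fint_{\sigma Q^\lambda_\rho}(|Dv|+s)^p\,dxdt \le M_\sigma^{p-q}\fint_{\sigma Q^\lambda_\rho}(|Dv|+s)^q\,dxdt$, where $M_\sigma := \sup_{\sigma Q^\lambda_\rho}(|Dv|+s)$, and substitute into the preceding inequality. Writing $\gamma := \frac{2}{n(p-2)+2p}$, a direct computation gives $1-\gamma(p-q) = \frac{n(p-2)+2q}{n(p-2)+2p}$, which is strictly positive precisely when $q > n(2-p)/2$; this is the only place where the lower bound on $q$ in the hypothesis enters the argument. Under that condition, Young's inequality with conjugate exponents $\frac{1}{\gamma(p-q)}$ and $\frac{1}{1-\gamma(p-q)}$ yields
\begin{equation*}
  \frac{c_*}{(\sigma-\tau)^\kappa}\lambda^{\theta} M_\sigma^{\gamma(p-q)} X^\gamma \le \tfrac{1}{2} M_\sigma + \frac{c}{(\sigma-\tau)^{\kappa'}} \lambda^{\frac{n(p-2)}{n(p-2)+2q}} X^{\frac{2}{n(p-2)+2q}},
\end{equation*}
where $X := \fint_{\frac 1 2 Q^\lambda_\rho}(|Dv|+s)^q\,dxdt$ and $\theta = \frac{n(p-2)}{n(p-2)+2p}$. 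A short algebraic check confirms that the final exponents on $\lambda$ and $X$ match exactly those appearing in the statement of Theorem~\ref{lip}.

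Finally, the factor $\frac{1}{2}M_\sigma$ on the right-hand side is absorbed into the left-hand side via the standard iteration lemma for monotone functions on geometrically shrinking cylinders (\cite[Lemma 6.1]{giusti2003direct}) applied to $\Phi(\tau) := \sup_{\tau Q^\lambda_\rho}(|Dv|+s)$ on $\tau \in [1/4, 1/2]$. This removes the bad prefactor $(\sigma-\tau)^{-\kappa'}$ and yields the desired bound on $\sup_{\frac{1}{4}Q^\lambda_\rho}\|Dv\|$.

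The main point to track carefully is the sharp threshold $q > n(2-p)/2$: the absorption via Young's inequality is possible if and only if the exponent $\gamma(p-q)$ on $M_\sigma$ is strictly less than one, and this single inequality is what forces the lower bound on $q$ in Theorem~\ref{lip}. Once this threshold is identified, everything else reduces to algebraic bookkeeping of exponents together with the classical Moser-type iteration, and no new PDE input beyond DiBenedetto's sup--$L^p$ estimate is required.
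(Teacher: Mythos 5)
Your proof is correct, but note that the paper offers no proof of Theorem~\ref{lip} at all: it is stated as a direct citation of DiBenedetto's monograph (Chapter~8, Theorem~5.2$'$), which already records the estimate for arbitrary $q>n(2-p)/2$. Your argument, by contrast, reconstructs the general-$q$ estimate from the endpoint $q=p$ case by the classical self-improvement device: interpolate $\fint(|Dv|+s)^p\le M_\sigma^{\,p-q}\fint(|Dv|+s)^q$, apply Young's inequality with conjugate exponents $(\gamma(p-q))^{-1}$ and $(1-\gamma(p-q))^{-1}$, and absorb the sup via the Giusti iteration lemma on nested subcylinders of $\frac12 Q_\rho^\lambda$. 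This is indeed essentially the mechanism DiBenedetto uses internally to pass from the unprimed to the primed theorems, so your write-up in effect unpacks the black-box citation. The bookkeeping is right: $1-\gamma(p-q)=\frac{n(p-2)+2q}{n(p-2)+2p}$, its positivity is exactly equivalent to $q>n(2-p)/2$ (given $p>\frac{2n}{n+2}$ so that $n(p-2)+2p>0$), and the resulting exponents $\frac{n(p-2)}{n(p-2)+2q}$ and $\frac{2}{n(p-2)+2q}$ on $\lambda$ and the integral match the statement. The only small care points are that the iteration should run on $\tau\in[1/4,\tau_1]$ with $\tau_1<1/2$ so that $\Phi(\tau)=\sup_{\tau Q_\rho^\lambda}(|Dv|+s)$ is finite throughout, and that passing from $\fint_{\sigma Q}$ to $\fint_{\frac12 Q}$ introduces a uniformly bounded volume ratio; both are routine. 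What this approach buys you is a self-contained derivation from the more elementary $L^p$ sup bound instead of invoking the sharper result wholesale, at the cost of rehearsing a standard iteration that the reference already performs.
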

In the rest of this section, we always assume $$p\in\big(p^*(n), 2-\frac{1}{n+1}\big] \quad \text{and} \quad q\in \big(\max\{\frac{n+2}{2(n+1)}, \frac{(2-p)n}{2}\}, p-\frac{n}{n+1}\big),$$
where $p^*(n)$ is defined in \eqref{pstar} so that all of the assumptions in Lemma \ref{lem:u-w}--Theorem \ref{lip} are satisfied. In particular, we have $q\in(\frac{1}{2}, 1)$ and therefore
\begin{equation}\label{jensen2}
   (a+b)^q\leq a^q+b^q, \quad  (a+b)^{1/q}\leq 2a^{1/q}+2b^{1/q}, \quad \forall\,a,b>0.
\end{equation}
\begin{lemma}\label{lem:u-v}
Assume that
\begin{equation}\label{Upper}
    \Big(\fint_{Q_{\rho}^{\lambda}}(|D u|+s)^q\,dxdt\Big)^\frac{1}{q}\leq \lambda, \quad
    \frac{|\mu|(Q_{\rho}^{\lambda})}{\rho^{n+1}} \leq \lambda.
\end{equation}
Then there exists a constant $c_7=c_7(n,p,\nu,L,q)\geq 1$, such that
\begin{equation}\label{u-v}
    \Big(\fint_{\frac{1}{2}Q^\lambda_{\rho}} |Du-Dv|^q \,dxdt\Big)^\frac{1}{q} \leq c_7\, \omega(\rho)\lambda +c_7\, \Big[ \frac{|\mu|(Q_{\rho}^{\lambda})}{\rho^{n+1}}\Big].
\end{equation}
\end{lemma}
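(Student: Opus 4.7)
The strategy is to insert the intermediate solution $w$ of \eqref{eqa:w} and use the triangle inequality in $L^q$ (valid for $q\in(1/2,1)$ by \eqref{jensen2}):
$$\Big(\fint_{\frac{1}{2}Q_\rho^\lambda}|Du-Dv|^q\,dxdt\Big)^{1/q}\leq c\Big(\fint_{\frac{1}{2}Q_\rho^\lambda}|Du-Dw|^q\,dxdt\Big)^{1/q}+c\Big(\fint_{\frac{1}{2}Q_\rho^\lambda}|Dw-Dv|^q\,dxdt\Big)^{1/q},$$
so it suffices to bound the two terms on the right by $c|\mu|(Q_\rho^\lambda)/\rho^{n+1}$ and $c\,\omega(\rho)\lambda$, respectively. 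Throughout I will repeatedly use $|Q_\rho^\lambda|\asymp\lambda^{2-p}\rho^{n+2}$ and the hypotheses $\theta:=|\mu|(Q_\rho^\lambda)/\rho^{n+1}\leq\lambda$ and $(\fint(|Du|+s)^q)^{1/q}\leq\lambda$ (which in particular gives $s\leq\lambda$).

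\textbf{Step 1: Bound on $Du-Dw$.} After enlarging from $\tfrac12 Q_\rho^\lambda$ to $Q_\rho^\lambda$ (costing only a dimensional constant), Lemma \ref{lem:u-w} gives two terms, both containing $M:=|\mu|(Q_\rho^\lambda)/|Q_\rho^\lambda|^{(n+1)/(n+2)}\leq c\lambda^{-(2-p)(n+1)/(n+2)}\theta$. For the first term one computes the exponent:
$$1-\frac{(2-p)(n+1)}{n+2}=\frac{(n+1)p-n}{n+2},$$
so writing $M^{(n+2)/((n+1)p-n)}=M\cdot M^{[(n+2)-((n+1)p-n)]/((n+1)p-n)}$ and plugging in $\theta\leq\lambda$ in the second factor, one sees everything cancels to give $c\theta$. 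The second term, $M\cdot(\fint(|Du|+s)^q)^{(2-p)(n+1)/(q(n+2))}$, is handled by bounding the integral factor by $\lambda^{(2-p)(n+1)/(n+2)}$, which exactly cancels the $\lambda$-power in $M$, yielding again $c\theta$. Thus
$$\Big(\fint_{\frac12 Q_\rho^\lambda}|Du-Dw|^q\,dxdt\Big)^{1/q}\leq c\,\theta=c\,\frac{|\mu|(Q_\rho^\lambda)}{\rho^{n+1}}.$$

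\textbf{Step 2: Bound on $Dw-Dv$.} By Lemma \ref{lem:w-v} together with H\"older's inequality (since $q<p$),
$$\Big(\fint_{\frac12 Q_\rho^\lambda}|Dw-Dv|^q\,dxdt\Big)^{1/q}\leq\Big(\fint_{\frac12 Q_\rho^\lambda}|Dw-Dv|^p\,dxdt\Big)^{1/p}\leq c\,\omega(\rho)\Big(\fint_{\frac12 Q_\rho^\lambda}(|Dw|+s)^p\,dxdt\Big)^{1/p},$$
so it remains to prove $(\fint_{\frac12 Q_\rho^\lambda}(|Dw|+s)^p)^{1/p}\leq c\lambda$. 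Apply Theorem \ref{rev} (both \eqref{eq:rev} for $n\geq 2$ and \eqref{eq:rev2} for $n=1$ are available under our range of $q$): we need to control $(\fint_{Q_\rho^\lambda}(|Dw|+s)^q)^{1/q}$, which by a second triangle inequality in $L^q$ and Step 1 satisfies
$$\Big(\fint_{Q_\rho^\lambda}(|Dw|+s)^q\Big)^{1/q}\leq c\Big(\fint_{Q_\rho^\lambda}(|Du|+s)^q\Big)^{1/q}+c\Big(\fint_{Q_\rho^\lambda}|Du-Dw|^q\Big)^{1/q}\leq c(\lambda+\theta)\leq c\lambda.$$
Substituting back into Theorem \ref{rev}, the mixed term reads $c\lambda^{n(p-2)(p-q)/(n(p-2)+2q)}\cdot\lambda^{q(n(p-2)+2p)/(n(p-2)+2q)}$, and a direct computation shows the sum of these two exponents equals $p$, so this term is $\leq c\lambda^p$. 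Combined with the $c\lambda^p+cs^p\leq c\lambda^p$ from the other terms on the right of \eqref{eq:rev}/\eqref{eq:rev2}, we obtain $(\fint_{\frac12 Q_\rho^\lambda}(|Dw|+s)^p)^{1/p}\leq c\lambda$, hence
$$\Big(\fint_{\frac12 Q_\rho^\lambda}|Dw-Dv|^q\,dxdt\Big)^{1/q}\leq c\,\omega(\rho)\lambda.$$
Adding the two estimates yields \eqref{u-v}.

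The main obstacle is a bookkeeping one: making the exponents from Lemma \ref{lem:u-w} and from the reverse H\"older inequality collapse to the clean linear bounds $c\theta$ and $c\lambda^p$ respectively. This is where the restriction $q\in\bigl(\max\{(n+2)/(2(n+1)),(2-p)n/2\},\,p-n/(n+1)\bigr)$ is used in an essential way: the lower bound $q>(2-p)n/2$ is exactly what keeps the exponent of the integral in \eqref{eq:rev} admissible for reverse H\"older, while $q<p-n/(n+1)$ is what Lemma \ref{lem:u-w} demands, and the two together make all the algebraic cancellations above come out to give the claimed linear-in-$|\mu|$, linear-in-$\omega(\rho)\lambda$ bound.
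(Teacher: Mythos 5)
Your proof is correct and follows essentially the same route as the paper: insert the comparison solution $w$, control $Du-Dw$ via Lemma \ref{lem:u-w} and the hypotheses \eqref{Upper}, then control $Dw-Dv$ via Lemma \ref{lem:w-v} after establishing $\bigl(\fint_{\frac12 Q_\rho^\lambda}(|Dw|+s)^p\bigr)^{1/p}\leq c\lambda$ through the reverse H\"older estimate of Theorem \ref{rev}. The only difference is cosmetic: where the paper simply cites \cite[Corollary 4.4]{kuusi2013mingione} for the cancellation of exponents in Step~1, you write the algebra out explicitly, which matches what that reference does.
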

\begin{proof}
By \eqref{Upper} and Lemma \ref{lem:u-w}, we can argue as in the proof of \cite[Corollary 4.4]{kuusi2013mingione} and get
\begin{equation}\label{u-w2}
    \fint_{Q^\lambda_{\rho}} |Du-Dw|^q \,dxdt \leq c\, \Big[ \frac{|\mu|(Q_{\rho}^{\lambda})}{\rho^{n+1}}\Big]^q.
\end{equation}
Using \eqref{Upper}, \eqref{u-w2}, and the triangle inequality, we have
\begin{equation}\label{bound:w}
  \fint_{Q_{\rho}^{\lambda}}(|D w|+s)^q\,dxdt\leq c \lambda^q.
\end{equation}
From \eqref{Upper} we also have $s\leq \lambda$, which together with \eqref{bound:w} and Theorem \ref{rev} implies
$$
    \fint_{\frac{1}{2}Q_{\rho}^{\lambda}}(|D w|+s)^p\,dxdt\leq c \lambda^p.
$$
Thus, Lemma \ref{lem:w-v}, the last inequality, and H\"older's inequality imply
\begin{equation}\label{w-v2}
\fint_{\frac{1}{2}Q^\lambda_{\rho}} |Dw-Dv|^q \,dxdt \leq c\, \omega(\rho)^q \lambda^q.
\end{equation}
The estimate \eqref{u-v} now follows using \eqref{u-w2}, \eqref{w-v2} and the triangle inequality.
\end{proof}
\begin{lemma}\label{lem:supinf}
Let $\delta, \theta\in(0,1/2)$. Assume that
\begin{equation}\label{eq3.7.1}
   \Big(\fint_{Q_{\rho}^{\lambda}}(|D u|+s)^q\,dxdt\Big)^\frac{1}{q}\leq \lambda, \quad
    \frac{|\mu|(Q_{\rho}^{\lambda})}{\rho^{n+1}} \leq \frac{\delta^\frac{n+2}{q} \theta^\frac{1}{q}}{2c_7}\lambda, \quad \omega(\rho)\leq \frac{\delta^\frac{n+2}{q} \theta^\frac{1}{q}}{2c_7},
\end{equation}
where $c_7$ is the same constant as in Lemma \ref{lem:u-v}.
Then there exists a constant $c_8=c_8(n,p,\nu,L,q)\geq 1$, such that
\begin{equation*}
    s+\sup_{\frac{1}{4}Q^{\lambda}_\rho}\|Dv\|\leq c_8\lambda.
\end{equation*}
Moreover, it holds that
\begin{equation*}
    \fint_{\delta Q^{\lambda}_{\rho}} |Du|^q \,dxdt-\theta \lambda^q \leq \fint_{\delta Q^{\lambda}_{\rho}} |Dv|^q \,dxdt\leq \sqrt{n} \big(\sup_{\delta Q^{\lambda}_\rho}\|Dv\|\big)^q.
\end{equation*}
\end{lemma}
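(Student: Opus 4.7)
\medskip

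\textbf{Proof proposal for Lemma \ref{lem:supinf}.} The plan is to use Lemma \ref{lem:u-v} to transfer information from $Du$ to $Dv$, apply the Lipschitz estimate of Theorem \ref{lip} to $v$ for the sup bound, and then use the triangle inequality on the power $q$ to compare $|Du|^q$ and $|Dv|^q$ averages on $\delta Q_\rho^\lambda$. Observe that the hypothesis \eqref{eq3.7.1} gives $s \le \lambda$ (from $s^q \le \fint_{Q_\rho^\lambda} (|Du|+s)^q \le \lambda^q$) and implies precisely the hypothesis of Lemma \ref{lem:u-v}, since $\delta^{(n+2)/q}\theta^{1/q}/(2c_7) \le 1$. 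Consequently Lemma \ref{lem:u-v} yields
\begin{equation*}
\Big(\fint_{\frac{1}{2} Q_\rho^\lambda} |Du-Dv|^q\,dxdt\Big)^{1/q} \le c_7\,\omega(\rho)\lambda + c_7\,\frac{|\mu|(Q_\rho^\lambda)}{\rho^{n+1}} \le \delta^{(n+2)/q}\theta^{1/q}\lambda.
\end{equation*}

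\medskip

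For the sup bound, first use \eqref{jensen2}, the above comparison estimate, and the doubling of integrals from $\tfrac12 Q_\rho^\lambda$ to $Q_\rho^\lambda$ together with \eqref{eq3.7.1} to obtain
\begin{equation*}
\fint_{\frac12 Q_\rho^\lambda} (|Dv|+s)^q\,dxdt \le 2^{n+2}\fint_{Q_\rho^\lambda}(|Du|+s)^q\,dxdt + \fint_{\frac12 Q_\rho^\lambda}|Du-Dv|^q\,dxdt \le c\,\lambda^q,
\end{equation*}
where $c = c(n)$. Now apply Theorem \ref{lip} on $\tfrac12 Q_\rho^\lambda$. Since the assumption $q > n(2-p)/2$ gives $n(p-2)+2q > 0$, the exponents $n(p-2)/(n(p-2)+2q)$ and $2q/(n(p-2)+2q)$ sum to $1$, so
\begin{equation*}
\sup_{\frac14 Q_\rho^\lambda} \|Dv\| \le c_6(\lambda+s) + c_6\,\lambda^{\frac{n(p-2)}{n(p-2)+2q}}(c\lambda^q)^{\frac{2}{n(p-2)+2q}} \le c_8\lambda,
\end{equation*}
using $s\le\lambda$. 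This proves the first assertion.

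\medskip

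For the oscillation comparison, use $|Du|^q \le |Du-Dv|^q + |Dv|^q$ from \eqref{jensen2} and the inclusion $\delta Q_\rho^\lambda \subset \tfrac12 Q_\rho^\lambda$ with $|\tfrac12 Q_\rho^\lambda|/|\delta Q_\rho^\lambda| = (1/(2\delta))^{n+2}$ to get
\begin{equation*}
\fint_{\delta Q_\rho^\lambda} |Du-Dv|^q\,dxdt \le \Big(\frac{1}{2\delta}\Big)^{n+2}\fint_{\frac12 Q_\rho^\lambda} |Du-Dv|^q\,dxdt \le 2^{-(n+2)}\theta\lambda^q \le \theta\lambda^q,
\end{equation*}
where the second inequality uses the $L^q$ comparison obtained at the start. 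Averaging the pointwise bound on $|Du|^q$ yields the lower inequality $\fint_{\delta Q_\rho^\lambda}|Du|^q\,dxdt - \theta\lambda^q \le \fint_{\delta Q_\rho^\lambda}|Dv|^q\,dxdt$. The upper inequality is immediate from $|Dv|\le \sqrt n\,\|Dv\|$ and $q<1$: indeed $|Dv|^q\le n^{q/2}\|Dv\|^q\le \sqrt n\,\|Dv\|^q$, and taking the supremum and averaging gives the claim.

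\medskip

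No single step is genuinely hard; the main care is in tracking that the ``smallness factor'' $\delta^{(n+2)/q}\theta^{1/q}$ in \eqref{eq3.7.1} was chosen precisely so that after losing a factor $\delta^{-(n+2)}$ from the change of domain from $\tfrac12 Q_\rho^\lambda$ to $\delta Q_\rho^\lambda$ (for the $|Du-Dv|^q$ average) and raising the $L^q$-norm to the power $q$, what remains is exactly $\theta\lambda^q$.
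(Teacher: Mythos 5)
Your argument is correct and follows essentially the same route as the paper: invoke Lemma \ref{lem:u-v} to get the $L^q$-comparison $\big(\fint_{\frac12 Q_\rho^\lambda}|Du-Dv|^q\big)^{1/q}\le \delta^{(n+2)/q}\theta^{1/q}\lambda$, bound $\fint_{\frac12 Q_\rho^\lambda}(|Dv|+s)^q$ and feed it into Theorem \ref{lip} for the sup bound, then change domain to $\delta Q_\rho^\lambda$ for the oscillation inequality. The exponent arithmetic and the loss of the factor $\delta^{-(n+2)}$ are tracked just as in the paper, so no further comment is needed.
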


\begin{proof}
Using \eqref{eq3.7.1} and Lemma \ref{lem:u-v}, we have
\begin{equation}\label{u-v:2}
   \fint_{\frac{1}{2}Q^\lambda_{\rho}} |Du-Dv|^q \,dxdt \leq \delta^{n+2}\theta\lambda^q.
\end{equation}
This together with the first inequality in \eqref{eq3.7.1} and the triangle inequality implies
$$
\fint_{\frac{1}{2}Q_\rho^\lambda}(|Dv|+s)^q\,dxdt\leq \fint_{\frac{1}{2}Q_\rho^\lambda}(|Du|+s)^q\,dxdt+\fint_{\frac{1}{2}Q^\lambda_{\rho}} |Du-Dv|^q \,dxdt\leq (2^{n+2}+1)\lambda^q.
$$
Thus Theorem \ref{lip} and the last inequality imply
$$
s+\sup_{\frac{1}{4}Q^\lambda_{\rho}} \|Dv\| \leq c_8\lambda
$$
for some constant $c_8=c_8(n,p,\nu,L,q)\geq 1$.
Moreover, using \eqref{u-v:2} and the triangle inequality, we also have
\begin{align*}
    \fint_{\delta Q^{\lambda}_{\rho}} |Du|^q \,dxdt &\leq \fint_{\delta Q^{\lambda}_{\rho}} |Dv|^q \,dxdt+\fint_{\delta Q^\lambda_{\rho}} |Du-Dv|^q \,dxdt\\
    &\leq \fint_{\delta Q^{\lambda}_{\rho}} |Dv|^q \,dxdt+{(2\delta)}^{-(n+2)}\fint_{ \frac{1}{2}Q^\lambda_{\rho}} |Du-Dv|^q \,dxdt\\
    &\leq \fint_{\delta Q^{\lambda}_{\rho}} |Dv|^q \,dxdt+\theta \lambda^q.
\end{align*}
The lemma is proved.
\end{proof}

\begin{lemma}\label{lem:vtou}
Let $\delta\in(0,1/4)$ and $\epsilon\in(0,1]$. Suppose that the bounds
\begin{equation*}
    \Big(\fint_{Q_{\rho}^{\lambda}}(|D u|+s)^q\,dxdt\Big)^\frac{1}{q}\leq \lambda, \quad
    \frac{|\mu|(Q_{\rho}^{\lambda})}{\rho^{n+1}} \leq \lambda
\end{equation*}
hold, and that
$Dv$ satisfies
\begin{equation}\label{eq3.8.2}
    \phi_q(Dv, \delta Q^{\lambda}_\rho)\leq \frac{\epsilon}{2^{4(n+3)}} \phi_q(Dv, \frac{1}{4}Q^{\lambda}_\rho).
\end{equation}
Then
\begin{equation}\label{eq3.8.3}
    \phi_q(Du, \delta Q^{\lambda}_\rho)\leq \frac{\epsilon}{4} \phi_q(Du, Q^{\lambda}_\rho)+c_7\delta^{-(n+2)/q}\Big[\frac{|\mu|(Q_{\rho}^{\lambda})}{\rho^{n+1}}\Big]+c_7\delta^{-(n+2)/q} \omega(\rho) \lambda,
\end{equation}
where $c_7=c_7(n,p,\nu,L,q)$ is the same constant as in Lemma \ref{lem:u-v}.
\end{lemma}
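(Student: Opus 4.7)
My plan is to transfer the hypothesis \eqref{eq3.8.2} on $Dv$ to an analogous statement on $Du$ by using Lemma \ref{lem:u-v} twice: once to replace $Du$ by $Dv$ on the small scale $\delta Q_\rho^\lambda$, and once to replace $Dv$ by $Du$ on the intermediate scale $\frac{1}{4}Q_\rho^\lambda$. The bookkeeping of the $L^q$ quasi-triangle inequality constants from \eqref{eq:1d} is what makes the numerical factor $2^{4(n+3)}$ in the hypothesis work out.

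First I would bound the left-hand side using the definition of $\phi_q$ with the anchor $\mathbf{m}(Dv,\delta Q_\rho^\lambda)$ and the inequality $(a+b)^{1/q}\leq 2^{1/q-1}(a^{1/q}+b^{1/q})$ to obtain
\[
\phi_q(Du,\delta Q_\rho^\lambda)\;\leq\; 2^{1/q-1}\Bigl(\fint_{\delta Q_\rho^\lambda}|Du-Dv|^q\,dxdt\Bigr)^{1/q}+2^{1/q-1}\phi_q(Dv,\delta Q_\rho^\lambda).
\]
Since $\delta Q_\rho^\lambda\subset \tfrac{1}{2}Q_\rho^\lambda$ and $|\tfrac{1}{2}Q_\rho^\lambda|/|\delta Q_\rho^\lambda|=(2\delta)^{-(n+2)}$, Lemma \ref{lem:u-v} (whose hypotheses match those assumed here) controls the first term by $c\,\delta^{-(n+2)/q}[\omega(\rho)\lambda+|\mu|(Q_\rho^\lambda)/\rho^{n+1}]$. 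For the second term, I would invoke the hypothesis \eqref{eq3.8.2}.

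Next I would estimate $\phi_q(Dv,\tfrac{1}{4}Q_\rho^\lambda)$ by the symmetric procedure, using $\mathbf{m}(Du,Q_\rho^\lambda)$ as the anchor: the definition of $\phi_q$ gives
\[
\phi_q(Dv,\tfrac{1}{4}Q_\rho^\lambda)\leq 2^{1/q-1}\Bigl(\fint_{\frac{1}{4}Q_\rho^\lambda}|Dv-Du|^q\,dxdt\Bigr)^{1/q}+2^{1/q-1}\Bigl(\fint_{\frac{1}{4}Q_\rho^\lambda}|Du-\mathbf{m}(Du,Q_\rho^\lambda)|^q\,dxdt\Bigr)^{1/q}.
\]
The first term is again handled by Lemma \ref{lem:u-v}, with a volume blow-up factor $2^{(n+2)/q}$; the second is at most $4^{(n+2)/q}\phi_q(Du,Q_\rho^\lambda)$ by the volume comparison between $\tfrac{1}{4}Q_\rho^\lambda$ and $Q_\rho^\lambda$.

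Finally, I would combine these two chains and absorb constants. Since $q\in(1/2,1)$ we have $2^{1/q-1}\leq 2$ and $4^{(n+2)/q}\leq 2^{4(n+2)}$, so the coefficient in front of $\phi_q(Du,Q_\rho^\lambda)$ becomes
\[
2^{1/q-1}\cdot\frac{\epsilon}{2^{4(n+3)}}\cdot 2^{1/q-1}\cdot 4^{(n+2)/q}\;\leq\; 2\cdot\frac{\epsilon}{2^{4(n+3)}}\cdot 2\cdot 2^{4(n+2)}\;=\;\frac{\epsilon}{4},
\]
which is precisely why the assumption was made with the factor $2^{-4(n+3)}$. All other contributions are multiples of $\omega(\rho)\lambda+|\mu|(Q_\rho^\lambda)/\rho^{n+1}$ with prefactors of order $\delta^{-(n+2)/q}$ (the $\epsilon\leq 1$ piece from the $Dv$ side only adds an $O(1)$ contribution that is dominated), yielding \eqref{eq3.8.3}. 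The main obstacle is simply keeping track of the non-subadditivity of the $q$-norm and verifying that the explicit numerical factor $2^{4(n+3)}$ in the hypothesis matches the threshold needed for the $\epsilon/4$ target; everything else is a direct application of Lemma \ref{lem:u-v} and volume comparisons.
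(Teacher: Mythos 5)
Your proposal is correct and follows essentially the same decomposition as the paper: split $\phi_q(Du,\delta Q_\rho^\lambda)$ via the anchor $\mathbf{m}(Dv,\delta Q_\rho^\lambda)$, invoke hypothesis \eqref{eq3.8.2}, then split $\phi_q(Dv,\tfrac14 Q_\rho^\lambda)$ back onto $Du$ and control the $|Du-Dv|$ terms with Lemma \ref{lem:u-v} after volume blow-up to $\tfrac12 Q_\rho^\lambda$. The constant-tracking (using $q>1/2$ to bound $2^{1/q-1}\leq 2$ and $4^{(n+2)/q}\leq 2^{4(n+2)}$, so that the prefactor closes at $\epsilon/4$) matches the paper's accounting exactly.
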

\begin{proof}
Recalling the definition of $\phi_q$ and using \eqref{eq3.8.2}, \eqref{jensen2}, and the triangle inequality, we obtain
\begin{equation}\label{eq 3.8.4}
\begin{aligned}
    &\phi_q(Du,\delta Q_\rho^\lambda)\leq \Big(\fint_{\delta Q_\rho^\lambda}|Du-\mathbf{m}(Dv,\delta Q_\rho^\lambda)|^q\,dxdt\Big)^{1/q}\\
    &\leq 2\phi_q(Dv,\delta Q_\rho^\lambda)+2 \Big(\fint_{\delta Q_\rho^\lambda}|Du-Dv|^q\,dxdt\Big)^{1/q}\\
    &\leq \varepsilon 2^{-(4n+11)} \phi_q(Dv, \frac{1}{4}Q_\rho^\lambda)
    +2 {(2\delta)}^{-(n+2)/q} \Big(\fint_{\frac{1}{2} Q_\rho^\lambda}|Du-Dv|^q\,dxdt\Big)^{1/q}.
\end{aligned}
\end{equation}
Again using the definition of $\phi_q$ and the triangle inequality, we have
\begin{equation}\label{eq3.8.5}
    \begin{aligned}
     &\phi_q(Dv, \frac{1}{4}Q_\rho^\lambda)
    \leq 2\phi_q(Du, \frac{1}{4}Q_\rho^\lambda)
    + 2 \Big(\fint_{\frac{1}{4} Q_\rho^\lambda}|Du-Dv|^q\,dxdt\Big)^{1/q}\\
    &\leq 2\cdot 4^{(n+2)/q}\phi_q(Du, Q_\rho^\lambda)
    +2\cdot2^{(n+2)/q} \Big(\fint_{\frac{1}{2} Q_\rho^\lambda}|Du-Dv|^q\,dxdt\Big)^{1/q}.
    \end{aligned}
\end{equation}
Note that $q\in (1/2,1)$.
Therefore, the estimate \eqref{eq3.8.3} follows by combining \eqref{eq 3.8.4} and \eqref{eq3.8.5} and applying Lemma \ref{lem:u-v}.
\end{proof}

\subsection{Proof of pointwise gradient estimates.}\label{sec4.2}
\begin{proof}[Proof of Theorem \ref{thm:r1}.]
First, for $\lambda>0$, we define the Lebesgue set
\begin{equation}\label{Lebesgue}
\mathcal{L}_\lambda:=\big\{(x_0,t_0)\in \Omega_T:\, \lim_{\rho\to 0} \fint_{Q^\lambda_\rho(x_0,t_0)} |Du-Du(x_0,t_0)|\,dxdt=0\big\}.
\end{equation}
Direct calculations imply that $\mathcal{L}:=\mathcal{L}_1=\mathcal{L}_\lambda$ for all $\lambda>0$. Moreover, by the Lebesgue differentiation theorem, $\Omega_T\backslash \mathcal{L}$ has zero lebesgue measure. We will prove Theorem \ref{thm:r1} for every $(x_0,t_0)\in \mathcal{L}$.

\emph{Step 1: Choices of constants and basic inequalities.}
First, we take the constant $c_8=c_8(n,p,\nu,L,q)$ in Lemma \ref{lem:supinf} and define
\begin{equation}\label{const2}
A:=c_8, \quad B:=2560000n,\quad \gamma:=2^{-4(n+3)}.
\end{equation}
We fix the constants $\delta_\gamma$ and $\alpha$ in Theorem \ref{thm:iter2} with the choices of $A$, $B$, $\gamma$ in \eqref{const2}.
We now define
$$\delta_1=\delta_\gamma/4,\quad r_i=\delta_1^i r_\lambda,\quad Q_i=Q_{r_i}^\lambda$$
for any integer $i\geq 0$.
Next, we choose $k$ as the smallest integer larger than or equal to 2 such that
\begin{equation}\label{def:k}
    4\sqrt{n}A\delta_1^{(k-1)\alpha}\leq \frac{\delta_1^{(n+2)/q}}{1600},
\end{equation}
and we define
\begin{equation}\label{def:H}
    H_1:=400\delta_1^{-(n+2)/q}, \quad H_2:=5120000c_7 \delta_1^{-(k+2)(n+2)/q}, \quad c:=\max\{H_1,H_2\},
\end{equation}
where $c_7=c_7(n,p,\nu,L,q)$ is the constant in Lemma \ref{lem:u-v}.
We then choose $R_0=R_0(n,p,\nu,L,q)\in(0,\frac{1}{2}]$ to be the largest number in $(0,\frac{1}{2}]$ such that
\begin{equation}\label{def:R0}
    \int_{0}^{2R_0} \omega(\rho) \frac{d\rho}{\rho}\leq \frac{\delta_1^{(k+2)(n+2)/q}}{5120000c_7}.
\end{equation}
The constants $c$ and $R_0$ defined in \eqref{def:H} and \eqref{def:R0} are the same constants we choose in the statement of Theorem \ref{thm:r1}.
The choice of $H_1$ in \eqref{def:H} and \eqref{eq:thm1} imply that
\begin{equation}\label{initial}
\begin{aligned}
    &\Big(\fint_{Q_{0}}(|Du|+s)^q\,dxdt\Big)^\frac{1}{q}+\delta_1^{-(n+2)/q} \phi_q(Du,Q_{0})\\&\leq (1+\delta_1^{-(n+2)/q}) \Big(\fint_{Q_{r_\lambda}^\lambda}(|Du|+s)^q\,dxdt\Big)^\frac{1}{q}  \leq \frac{\lambda}{100}
\end{aligned}
\end{equation}
and that
\begin{equation}\label{bound:s}
    s\leq \frac{\lambda}{400}.
\end{equation}
Since $\delta_1\in(0,1/4)$, by using the comparison principle for the Riemann integrals, we have
\begin{equation}\label{eq:int:sum}
\begin{aligned}
    &\int_0^{2r_\lambda} \frac{|\mu|(Q^\lambda_\rho)}{\rho^{n+1}}\frac{d\rho}{\rho}\geq \sum_{i=0}^\infty \int_{r_{i}}^{2r_i} \frac{|\mu|(Q_\rho^\lambda)}{\rho^{n+1}} \frac{d\rho}{\rho}
   \\&\geq\frac 1 {n+1}(1-\frac 1 {2^{n+1}}) \sum_{i=0}^\infty \frac{|\mu|(Q_i)}{r_i^{n+1}}\geq \delta_1^{n+2}\sum_{i=0}^\infty \frac{|\mu|(Q_i)}{r_i^{n+1}}.
\end{aligned}
\end{equation}
The estimate \eqref{eq:int:sum} together with the choice of $H_2$ in \eqref{def:H} and \eqref{eq:thm1} imply that
\begin{equation}\label{sum:mu}
    \sum_{i=0}^\infty \frac{|\mu|(Q_i)}{r_i^{n+1}}\leq \frac{\delta_1^{(k+1)(n+2)/q}}{5120000c_7}\lambda.
\end{equation}
Similarly, since we have $r_\lambda\leq R_0$, the inequality \eqref{def:R0} implies that
\begin{equation}\label{sum:om}
    \sum_{i=0}^\infty \omega(r_i)\leq  \delta_1^{-(n+2)} \int_{0}^{2r_\lambda}\omega(\rho) \frac{d\rho}{\rho}\leq\frac{\delta_1^{(k+1)(n+2)/q}}{5120000c_7}.
\end{equation}

\emph{Step 2: Exit time argument.}

For any integer $i\geq 0$, we define $$C_i:=\Big(\fint_{Q_{i}}(|Du|+s)^q\,dxdt\Big)^\frac{1}{q}+\delta_1^{-(n+2)/q} \phi_q(Du,Q_{i}).$$ Thus $C_0\leq \lambda/100$ by \eqref{initial}. We can now assume without loss of generality that there exists a exit time $i_e\geq 0$, satisfying
\begin{equation*}
    C_{i_e}\leq \frac{\lambda}{100},\qquad C_j>\frac{\lambda}{100} \quad \forall j>i_e.
\end{equation*}
In fact, if this is not true, we can always find a increasing subsequence ${j_i}$ such that $C_{j_i}\leq \lambda/100$ for every integer $i\geq 0$ and therefore
$$
|Du(x_0,t_0)|\leq\limsup_{i\to \infty} \Big(\fint_{Q_{j_i}}|Du|^q\,dxdt\Big)^{1/q}\leq \frac{\lambda}{100},
$$
since $(x_0,t_0)\in \mathcal{L}$ is a Lebesgue point.

\emph{Step 3: Estimates after the exit time.}
The core of the proof is the following iteration lemma.
\begin{lemma}\label{lem:iter}
If $i\geq i_e$ and
\begin{equation}\label{supq}
    \Big(\fint_{Q_i}(|D u|+s)^q\,dxdt\Big)^\frac{1}{q}\leq \lambda,
\end{equation}
then
\begin{equation}\label{iter1}
    \phi_q(Du, Q_{i+1})\leq \frac{1}{4} \phi_q(Du, Q_i)+c_7\delta_1^{-(n+2)/q}\Big[\frac{|\mu|(Q_{i})}{r_i^{n+1}}\Big]+c_7\delta_1^{-(n+2)/q} \omega(r_i) \lambda.
\end{equation}
\end{lemma}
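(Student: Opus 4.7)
The plan is to derive \eqref{iter1} as a direct consequence of Lemma \ref{lem:vtou} applied with $\epsilon=1$, $\delta=\delta_1$, and outer cylinder $Q_\rho^\lambda=Q_i$. Since $Q_{i+1}=\delta_1 Q_i$ by construction, the conclusion \eqref{eq3.8.3} of Lemma \ref{lem:vtou} in this setting is precisely the target inequality \eqref{iter1}. Thus the whole task reduces to verifying the hypothesis \eqref{eq3.8.2}, namely
\begin{equation*}
\phi_q(Dv,\delta_1 Q_i)\le 2^{-4(n+3)}\phi_q(Dv,\tfrac14 Q_i)=\gamma\,\phi_q(Dv,\tfrac14 Q_i),
\end{equation*}
where $v$ is the solution to \eqref{eqa:v} posed on $\tfrac12 Q_i$ and $\gamma=2^{-4(n+3)}$ is the constant from \eqref{const2}. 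This is exactly the conclusion of Theorem \ref{thm:iter2} applied to the intrinsic cylinder $\tfrac14 Q_i$, since the defining relation $\delta_1=\delta_\gamma/4$ of our parameter yields $\delta_\gamma\cdot \tfrac14 Q_i=\delta_1 Q_i$.

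To apply Theorem \ref{thm:iter2} one needs two hypotheses on $v$. The upper bound $s+\sup_{\frac14 Q_i}\|Dv\|\le A\lambda$ with $A=c_8$ will follow from Lemma \ref{lem:supinf} invoked on $Q_i$ (with, say, $\delta=\theta=1/4$): the $L^q$-bound on $Du$ in \eqref{eq3.7.1} is exactly \eqref{supq}, while the required smallness of $|\mu|(Q_i)/r_i^{n+1}$ and $\omega(r_i)$ is a direct consequence of \eqref{sum:mu} and \eqref{sum:om}, which are themselves guaranteed by the choices of $H_2$ in \eqref{def:H} and $R_0$ in \eqref{def:R0}. Provided the remaining lower bound $\lambda\le B\sup_{\delta_1 Q_i}\|Dv\|$ also holds, Theorem \ref{thm:iter2} then supplies the hypothesis of Lemma \ref{lem:vtou}, and \eqref{iter1} follows immediately.

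The main obstacle is the complementary regime $\sup_{\delta_1 Q_i}\|Dv\|<\lambda/B$, in which Theorem \ref{thm:iter2} cannot be used. My plan there is to argue directly: $\phi_q(Dv,\delta_1 Q_i)\le\sqrt n\,\lambda/B$ trivially, and combined with the triangle inequality, the comparison estimate of Lemma \ref{lem:u-v}, and the volume ratio $|\delta_1 Q_i|/|\tfrac12 Q_i|=(2\delta_1)^{n+2}$, one obtains
\begin{equation*}
\phi_q(Du,Q_{i+1})\le \frac{2\sqrt n\,\lambda}{B}+2(2\delta_1)^{-(n+2)/q}c_7\Bigl[\omega(r_i)\lambda+\frac{|\mu|(Q_i)}{r_i^{n+1}}\Bigr].
\end{equation*}
The second contribution already matches the error part of \eqref{iter1} since $(n+2)/q>3$ forces $2\cdot 2^{-(n+2)/q}<1/4$. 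The delicate step is absorbing the deterministic term $2\sqrt n\,\lambda/B=\lambda/(1280000\sqrt n)$ into $\tfrac14\phi_q(Du,Q_i)$; this exploits the specific large choice $B=2560000n$ in \eqref{const2} together with the exit-time bound $C_i>\lambda/100$ (for $i>i_e$) and the smallness $s\le\lambda/400$ from \eqref{bound:s}, which together force $\phi_q(Du,Q_i)$ to be comparably large. The boundary case $i=i_e$ will be handled separately by exploiting the initial smallness \eqref{initial}, which makes the error terms alone dominate \eqref{iter1}.
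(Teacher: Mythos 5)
Your proposal correctly identifies the high-level structure of the argument: the target inequality \eqref{iter1} is exactly the conclusion of Lemma \ref{lem:vtou} (with $\epsilon=1$, $\delta=\delta_1$), whose hypothesis \eqref{eq3.8.2} is in turn supplied by Theorem \ref{thm:iter2} applied on $\tfrac14 Q_i$; and the upper bound $s+\sup_{\frac14 Q_i}\|Dv\|\le A\lambda$ needed for Theorem \ref{thm:iter2} does come from Lemma \ref{lem:supinf}. However, you then split into two cases according to whether the lower bound $\lambda\le B\sup_{Q_{i+1}}\|Dv\|$ of Theorem \ref{thm:iter2} holds, and try to handle the complementary regime directly. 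This misses the central point of the paper's proof, which is to \emph{rule out} the complementary regime altogether by a chain of three steps: (i) apply Lemma \ref{lem:supinf} a second time with $\delta=\delta_1^k$ and $\theta=1/1600$ to get \eqref{vinf1}, a lower bound for $\sup_{Q_{i+k}}\|Dv\|$ in terms of $\fint_{Q_{i+k}}|Du|^q$; (ii) control $\phi_q(Du,Q_{i+k})$ using the oscillation estimate of Theorem \ref{thm:osc} for $Dv$ (the integer $k$ in \eqref{def:k} is chosen precisely to make $\osc_{Q_{i+k}}Dv$ small relative to $\delta_1^{(n+2)/q}\lambda$) together with the comparison Lemma \ref{lem:u-v}; (iii) use the exit-time bound $C_{i+k}>\lambda/100$ (valid since $k\ge 2$ forces $i+k>i_e$ even when $i=i_e$) to extract a lower bound on $\fint_{Q_{i+k}}|Du|^q$ and hence on $\sup_{Q_{i+1}}\|Dv\|\ge\sup_{Q_{i+k}}\|Dv\|\ge\lambda/B$. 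The roles of Theorem \ref{thm:osc}, the integer $k$, and the shift to the cylinder $Q_{i+k}$ are entirely absent from your proposal, yet they are what make the argument close.

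The complementary-regime argument you sketch does not work. The key absorption $2\sqrt n\,\lambda/B\le\tfrac14\phi_q(Du,Q_i)$ requires a lower bound on $\phi_q(Du,Q_i)$ of size comparable to $\lambda/B$. But the exit-time condition $C_i>\lambda/100$, even combined with your complementary hypothesis $\sup_{\delta_1 Q_i}\|Dv\|<\lambda/B$, only controls the weighted sum $\big(\fint_{Q_i}(|Du|+s)^q\big)^{1/q}+\delta_1^{-(n+2)/q}\phi_q(Du,Q_i)$. Carrying the estimates through, one obtains at best $\phi_q(Du,Q_i)\gtrsim \delta_1^{(n+2)/q}\lambda$, and since $\delta_1$ is produced by Theorem \ref{thm:iter2} and is therefore much smaller than $1/B$ (see \eqref{eq3.203}), this is far below the needed threshold $\lambda/B$. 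Making $B$ larger in \eqref{const2} does not help, because $\delta_1$ shrinks simultaneously. The treatment of the boundary case $i=i_e$ is also incorrect: the ``initial smallness'' \eqref{initial} concerns $Q_0$, not $Q_{i_e}$, and from $C_{i_e}\le\lambda/100$ one only gets an \emph{upper} bound on $\phi_q(Du,Q_{i_e})$, which may be zero; the error terms coming from \eqref{sum:mu}, \eqref{sum:om} are of size $\delta_1^{k(n+2)/q}\lambda$ and cannot dominate $2\sqrt n\,\lambda/B$. So the proposal has a genuine gap at its core.
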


\begin{proof}
Let $w$ and $v$ be the solutions to \eqref{eqa:w} and \eqref{eqa:v} respectively, with $\rho=r_i$. We then take $\rho=r_i$ and $\theta=1/1600$
in Lemma \ref{lem:supinf}. By \eqref{supq}, \eqref{sum:mu} and \eqref{sum:om}, we can apply Lemma \ref{lem:supinf} both with $\delta=\delta_1$ and with $\delta=\delta_1^k$.
We obtain
\begin{equation}\label{vsup1}
    s+\sup_{Q_{i+1}}\|Dv\|\leq s+\sup_{\frac{1}{4}Q_i} \|Dv\|\leq  c_8\lambda=A\lambda
\end{equation}
and
\begin{equation}\label{vinf1}
    \fint_{ Q_{i+k}} |Du|^q \,dxdt- \frac{\lambda^q}{1600} \leq \fint_{ Q_{i+k}} |Dv|^q \,dxdt\leq \sqrt{n} \big(\sup_{ Q_{i+k}}\|Dv\|\big)^q.
\end{equation}
Next, by Theorem \ref{thm:osc} with $\rho=r_{i+k}$ and $r=r_{i+1}$, \eqref{vsup1}, and \eqref{def:k}, we have
$$
\osc\limits_{Q_{i+k}} Dv\leq 4\sqrt{n} A \delta_1^{(k-1)\alpha}\lambda\leq \frac{\delta_1^{(n+2)/q}}{1600}\lambda.
$$
Again by \eqref{supq}, \eqref{sum:mu} and \eqref{sum:om}, we can apply Lemma \ref{lem:u-v} with $\rho=r_i$ and get
\begin{align*}
  \Big(\fint_{Q_{i+k}} |Du-Dv|^q \,dxdt \Big)^\frac{1}{q}&\leq \Big(\frac{|\frac{1}{2}Q_i|}{|Q_{i+k}|}\Big)^\frac{1}{q}\Big(\fint_{\frac{1}{2}Q_{i}} |Du-Dv|^q \,dxdt\Big)^\frac{1}{q}
  \\& \leq \frac{c_7}{2} \delta_1^{-k(n+2)/q}\omega(r_i)\lambda +\frac{c_7}{2} \delta_1^{-k(n+2)/q}\Big[ \frac{|\mu|(Q_{i})}{r_i^{n+1}}\Big]
  \\&\leq \frac{\delta_1^{(n+2)/q}}{1600}\lambda.
\end{align*}
Therefore, from the above two inequalities,
\begin{align*}
  \phi_q(Du, Q_{i+k})&\leq  2^{\frac{1}{q}-1}\phi_q(Dv, Q_{i+k})+ 2^{\frac{1}{q}-1}\Big(\fint_{Q_{i+k}} |Du-Dv|^q \,dxdt\Big)^{\frac{1}{q}}
  \\&\leq 2\;\osc\limits_{Q_{i+k}} Dv+ 2\Big(\fint_{Q_{i+k}} |Du-Dv|^q \,dxdt\Big)^{\frac{1}{q}}
  \\&\leq \frac{\delta_1^{(n+2)/q}}{400}\lambda.
\end{align*}
The last estimate and \eqref{bound:s} imply that
\begin{align*}
    C_{i+k}&=\Big(\fint_{Q_{i+k}}(|Du|+s)^q\,dxdt\Big)^\frac{1}{q}+\delta_1^{-(n+2)/q} \phi_q(Du,Q_{i+k})
    \\&\leq 2\Big(\fint_{Q_{i+k}}|Du|^q\,dxdt\Big)^\frac{1}{q}+2s+\frac{\lambda}{400}
    \\&\leq 2\Big(\fint_{Q_{i+k}}|Du|^q\,dxdt\Big)^\frac{1}{q}+\frac{3\lambda}{400}.
\end{align*}
Therefore, by \eqref{vinf1}  and the fact that $C_{i+k}>\lambda/100$, we have
\begin{equation}\label{vinf2}
  \sup_{ Q_{i+1}}\|Dv\|\ge \sup_{ Q_{i+k}}\|Dv\|\geq \big(\frac{1}{1600\sqrt{n}}\big)^\frac{1}{q} \lambda\geq \frac{\lambda}{2560000n}=\frac{\lambda}{B}.
\end{equation}
By \eqref{vsup1} and \eqref{vinf2}, we can apply Theorem \ref{thm:iter2} with $Q_r^\lambda=\frac{1}{4} Q_i$ and constants $A$, $B$, $\gamma$ chosen in \eqref{const2} and get
$$\phi_q(Dv,Q_{i+1})=\phi_q(Dv, \frac{\delta_\gamma}{4}Q_i)\leq 2^{-4(n+3)} \phi_q(Dv,\frac{1}{4}Q_{i}).$$
Finally, using \eqref{supq}, \eqref{sum:mu}, and the last inequality, we can apply Lemma \ref{lem:vtou} with $\epsilon=1$ to obtain \eqref{iter1}.
\end{proof}
\emph{Step 4: Iteration and conclusion.}
For any integer $j\geq 0$, we denote $$\Phi_j:=\phi_q(Du, Q_j),\quad \mathbf{m}_j:=\mathbf{m}(Du,Q_j).$$
Since we have
$$
(|Du(x,t)|+s)^q\leq |Du(x,t)-\mathbf{m}_j|^q+(|\mathbf{m}_j|+s)^q,$$
by taking the average over $(x,t)\in Q_j$ and then taking the $q$-th root, we obtain
\begin{equation}\label{sup1}
    \Big(\fint_{Q_{j}}(|Du|+s)^q\,dxdt\Big)^\frac{1}{q}\leq 2\Phi_j+2|\mathbf{m}_j|+2s.
\end{equation}
Here we also used the fact that $q\in(1/2, 1)$. Moreover, by \eqref{eq:mbound}, we have
\begin{equation}\label{bound:m}
  |\mathbf{m}_j|\leq 2\Phi_j+2\Big(\fint_{Q_j}|Du(x,t)|^q \,dxdt\Big)^{1/q}\leq 2C_j.
\end{equation}
Using \eqref{eq:mdiff}, we also have
\begin{equation}\label{diff1}
     |\mathbf{m}_{j+1}-\mathbf{m}_j|\leq 2\Phi_{j+1}+2\delta_1^{-(n+2)/q}\Phi_j.
\end{equation}
We now prove by induction that
\begin{equation}\label{induction}
    \Phi_{j}+|\mathbf{m}_{j}|+s\leq \frac{\lambda}{2}
\end{equation}
holds for any $j\geq i_e$.
First, by the definition of exit time $i_e$ and \eqref{bound:m}, we know that
$$
C_{i_e}\le \frac\lambda {100},\quad|\mathbf{m}_{i_e}|\leq \frac{\lambda}{50},
$$
and thus
$$\Phi_{i_e}+|\mathbf{m}_{i_e}|+s\leq \frac{\lambda}{2}.$$
Assume that \eqref{induction} holds for any $j\in \{i_e,\ldots,i\}$.
Then by \eqref{sup1} we have
$$\Big(\fint_{Q_{j}}(|Du|+s)^q\,dxdt\Big)^\frac{1}{q}\leq \lambda $$
for any $j\in \{i_e,\ldots,i\}$.
Therefore we can apply Lemma \ref{lem:iter} to get
\begin{equation}\label{iterA}
    \Phi_{j+1}\leq \frac{1}{4}\Phi_{j}+c_7\delta_1^{-(n+2)/q}\Big[\frac{|\mu|(Q_{i})}{r_j^{n+1}}\Big]+c_7\delta_1^{-(n+2)/q} \omega(r_j) \lambda
\end{equation}
for any $j\in \{i_e,\ldots,i\}$.
Thus, using \eqref{induction} with $j=i$,  \eqref{iterA}, \eqref{sum:mu} and \eqref{sum:om}, we have
 \begin{equation}\label{boundA}
      \Phi_{i+1}\leq \frac{\lambda}{8}+c_7\delta_1^{-(n+2)/q}\Big[\frac{|\mu|(Q_{i})}{r_i^{n+1}}\Big]+c_7\delta_1^{-(n+2)/q} \omega(r_i) \lambda\leq \frac{\lambda}{4}.
 \end{equation}
Summing up \eqref{iterA} in $j\in \{i_e,\ldots,i\}$, we also have
$$
\sum_{j=i_e}^{i+1} \Phi_j\leq \Phi_{i_e}+ \frac{1}{4}\sum_{j=i_e}^{i}\Phi_{j}+c_7\delta_1^{-(n+2)/q}\sum_{j=i_e}^{i}\Big[\frac{|\mu|(Q_{i})}{r_j^{n+1}}\Big]+c_7\delta_1^{-(n+2)/q} \sum_{j=i_e}^{i}\omega(r_j)\lambda
$$
and thus
\begin{equation}\label{sumA}
     \sum_{j=i_e}^{i+1} \Phi_j\leq \frac{4}{3}\Phi_{i_e}+\frac{4}{3} c_7\delta_1^{-(n+2)/q}\sum_{j=i_e}^{i}\Big[\frac{|\mu|(Q_{j})}{r_j^{n+1}}\Big]+\frac{4}{3}c_7\delta_1^{-(n+2)/q} \sum_{j=i_e}^{i}\omega(r_j)\lambda.
\end{equation}
Using \eqref{diff1}, \eqref{sumA}, \eqref{sum:mu}, and \eqref{sum:om}, we obtain
\begin{align*}
   &|\mathbf{m}_{i+1}-\mathbf{m}_{i_e}|\leq \sum_{j=i_e}^i|\mathbf{m}_{j+1}-\mathbf{m}_{j}|
   \leq 4\delta_1^{-(n+2)/q} \sum_{j=i_e}^{i+1}\Phi_j\\
   &\leq 8\delta_1^{-(n+2)/q} \Phi_{i_e}+8 c_7\delta_1^{-2(n+2)/q}\sum_{j=i_e}^{i}\Big[\frac{|\mu|(Q_{j})}{r_j^{n+1}}\Big]+8c_7\delta_1^{-2(n+2)/q} \sum_{j=i_e}^{i}\omega(r_j)\lambda\\
   &\leq 8\delta_1^{-(n+2)/q} \Phi_{i_e}+\frac{\lambda}{100}.
\end{align*}
Therefore it follows from \eqref{bound:m} and the previous inequality that
\begin{equation}\label{boundm2}
    |\mathbf{m}_{i+1}|\leq |\mathbf{m}_{i_e}|+8\delta_1^{-(n+2)/q}\Phi_{i_e}+\frac{\lambda}{100}\leq 10 C_{i_e}+\frac{\lambda}{100}\leq \frac{\lambda}{8}.
\end{equation}
By \eqref{bound:s}, \eqref{boundA} and \eqref{boundm2}, we obtain
$$
\Phi_{i+1}+|\mathbf{m}_{i+1}|+s\leq \frac{\lambda}{4}+\frac{\lambda}{8}+\frac{\lambda}{400}\leq \frac{\lambda}{2},
$$
which completes the induction.
Since we have
$$|\mathbf{m}_j-Du(x_0,t_0)|^q\leq |Du(x,t)-\mathbf{m}_j|^q+|Du(x,t)-Du(x_0,t_0)|^q,$$
by taking the average over $(x,t)\in Q_j$ and then taking the $q$-th root, we obtain
\begin{equation}\label{eq:m-du}
\begin{aligned}
  |\mathbf{m}_j-Du(x_0,t_0)|&\leq  2\phi_q(Du,Q_j)+2\Big(\fint_{Q_j}|Du-Du(x_0,t_0)|^q \,dxdt\Big)^{1/q}\\&\leq 4\Big(\fint_{Q_j}|Du-Du(x_0,t_0)|^q \,dxdt\Big)^{1/q}.
 \end{aligned}
\end{equation}
Since $(x_0,t_0)\in \mathcal{L}$ is a Lebesgue point, using \eqref{eq:m-du} and \eqref{induction}, we obtain
$$
|Du(x_0,t_0)|=\lim_{j\to\infty} |\mathbf{m}_j|\leq \frac{\lambda}{2}.
$$
The proof of Theorem \ref{thm:r1} is completed.
\end{proof}

\begin{proof}[Proof of Theorem \ref{thm:int}.]
Without loss of generality, we assume that $\mathbf{I}_1^{|\mu|}(x_0,t_0,2r)<\infty$. We consider the function
\begin{align*}
    h(\lambda)&:=\lambda-c \,\Big(\fint_{Q_{r_\lambda}^\lambda(x_0,t_0)} (|Du|+s+1)^q\,dxdt\Big)^{1/q} -c\, \int_{0}^{2r_\lambda} \frac{|\mu|(Q_\rho^\lambda(x_0,t_0))}{\rho^{n+1}} \frac{d\rho}{\rho}\\
&=\lambda-c\lambda^\frac{n(2-p)}{2q} A(\lambda)-c\lambda^\frac{(n+1)(2-p)}{2} B(\lambda),
\end{align*}
where $r_\lambda=\lambda^{(p-2)/2}r$ and
$$
A(\lambda):=\frac{1}{|Q_r(x_0,t_0)|^{1/q}} \Big(\int_{Q_{r_\lambda}^\lambda} (|Du|+s+1)^q\,dxdt\Big)^{1/q}
$$
and
$$
B(\lambda):=\int_0^{2r} \frac{|\mu|(Q^\lambda_{\lambda^{(p-2)/2}\rho}(x_0,t_0))}{\rho^{n+1}}\frac{d\rho}{\rho}.
$$
Here $c$ is the same constant as in Theorem \ref{thm:r1}.
Since $p\in(1,2)$, we have $Q^{\lambda_2}_{\lambda_2^{(p-2)/2}\rho}\subset Q^{\lambda_1}_{\lambda_1^{(p-2)/2}\rho}$ for every $\lambda_2>\lambda_1>0$ and $\rho>0$ and, therefore, $A$ and $B$ are nonincreasing functions of $\lambda$. Moreover, the functions $A$, $B$, $h$ are well defined for $\lambda\in[1,\infty)$ since $Q^\lambda_{2\lambda^{(p-2)/2}r}(x_0,t_0)\subset Q_{2r}(x_0,t_0) \subset \Omega$ for any $\lambda\in[1,\infty)$.
Clearly, $h$ is a continuous function on $[1,\infty)$ and
$h(1)\leq 0$ since $c\geq 1$ and $A(1)\geq 1$. On the other hand, since $q>\frac{n(2-p)}{2}$ and $p>\frac{2n}{n+1}$, we have
$$
\lim_{\lambda\to\infty} h(\lambda)\geq \lim_{\lambda\to\infty} \Big(\lambda-c\lambda^\frac{n(2-p)}{2q} A(1)-c\lambda^\frac{(n+1)(2-p)}{2} B(1)\Big)= \infty.
$$
Thus there exists some $\lambda\geq 1$ such that $h(\lambda)=0$ and therefore \eqref{eq:thm1} holds for such $\lambda$. Since  $\lambda\geq 1$ and $r\in(0,R_0]$, we have $r_\lambda\equiv \lambda^{(p-2)/2}r\in(0, R_0]$.
Applying Theorem \ref{thm:r1} and  using the fact that $h(\lambda)=0$, we obtain
\begin{equation}\label{eq:thm1:2}
\begin{aligned}
\lambda+|Du(x_0,t_0)|\leq 2\lambda=2c\lambda^\frac{n(2-p)}{2q} A(\lambda)+2c\lambda^\frac{(n+1)(2-p)}{2} B(\lambda).
\end{aligned}
\end{equation}
Using the fact that $A,\,B$ are nonincreasing functions and Young's inequality with conjugate exponents
$\big(\frac{2q}{n(2-p)},\frac{2q}{2q-n(2-p)}\big)$ and
$\big(\frac{2}{(n+1)(2-p)},\frac{2}{(n+1)p-2n}\big)$, we obtain
\begin{align*}
    2c\lambda^\frac{n(2-p)}{2q} A(\lambda)\leq 2c\lambda^\frac{n(2-p)}{2q} A(1)\leq \frac{\lambda}{4}+c'\,[A(1)]^\frac{2q}{2q-n(2-p)}
\end{align*}
and
\begin{equation}\label{eq:thm1:2.2}
    2c\lambda^\frac{(n+1)(2-p)}{2} B(\lambda) \leq 2c\lambda^\frac{(n+1)(2-p)}{2} B(1)\leq \frac{\lambda}{4}+c'\,[B(1)]^\frac{2}{(n+1)p-2n}.
\end{equation}
Therefore \eqref{eq:thm2} follows by using the last two inequalities and \eqref{eq:thm1:2}.
\end{proof}

\begin{proof}[Proof of Corollary \ref{cor:1}.]
Corollary \ref{cor:1} follows similarly as in the proof of Theorem \ref{thm:int}. The only difference is that we need to replace \eqref{eq:thm1:2.2} with the following estimate
$$
2c\lambda^\frac{(n+1)(2-p)}{2} B(\lambda)\leq \lambda/4+c'' \|f\|_{L^\infty}^{1/(p-1)} [\mathbf{I}_1^{\mu_0}(x_0,2r)]^{1/(p-1)},
$$
which was already proved in \cite[Corollary 1.3]{kuusi2013mingione}.
\end{proof}

\section{Gradient continuity results}\label{sec5}

\subsection{Preliminary choices of constants and the geometry}
In this section, we always assume $p\in(p^*(n),2-\frac{1}{n+1}]$, where $p^*(n)$ is defined in \eqref{pstar}. We also choose $q:=q(n,p)\in(1/2,1)$ as a fixed constant depending only on $n$ and $p$. For instance, we can take
$$q:=\frac{1}{2}\Big(\max\{\frac{n+2}{2(n+1)}, \frac{(2-p)n}{2}\}+ p-\frac{n}{n+1}\Big)\in (\frac{1}{2},1).$$
The choices of geometry in this section are essentially the same as in \cite[Section 5.1]{kuusi2013mingione}. For completeness, we shall still briefly report the choices.
First, we fix an open cylinder $\tilde{Q}\subset\subset \Omega_T$ and take another cylinder $\tilde{Q}'$ such that
$\tilde{Q}\subset\subset \tilde{Q}'\subset\subset \Omega_T$. Let $\bar{R}_1:=\text{dist}_{\text{par}}(\tilde{Q}, \partial_{\text{par}} \tilde{Q}')>0$.
Under the assumptions of Theorem \ref{thm:cty} or Theorem \ref{thm1.8}, it always holds that the Riesz potential $\mathbf{I}^{|\mu|}_1(x,t;r)$ is locally bounded in $\Omega_T$ for some $r>0$. Therefore we can apply Theorem \ref{thm:int} to obtain that $Du$ is locally bounded in $\Omega_T$
and that in particular, $Du$ is bounded in $\tilde{Q}'$.
Thus we can choose
\begin{equation*}
    M:=1+s+\sup_{\tilde{Q}'}|Du|<\infty.
\end{equation*}
Let
\begin{equation}\label{def:R1}
    \lambda_M:=M \quad \text{and} \quad R_1:=\frac{1}{4}\lambda_M^{(p-2)/2} \bar{R}_1.
\end{equation}
Then we have $Q_r^{\lambda_M}(x_0,t_0)\subset \tilde{Q}'$ whenever $(x_0,t_0)\in\tilde{Q}$ and $r\in(0, R_1]$, and therefore
\begin{equation}\label{2sup}
 s+\sup_{Q_r^{\lambda_M}(x_0,t_0)}|Du|\leq \lambda_M, \quad \forall r\in(0,R_1].
\end{equation}
By \eqref{def:R1} and \eqref{2sup}, we have
\begin{equation}\label{eq5.2.2}
    \Big(\fint_{Q_\rho^{\lambda_M}(x_0,t_0)}(|Du|+s)^q \,dxdt\Big)^{1/q}\leq M\equiv\lambda_M
\end{equation}
for any $(x_0,t_0)\in \tilde{Q}$ and any $\rho\in(0,R_1]$.
\subsection{Proof of gradient continuity results}
First, we prove the following proposition.
\begin{proposition}\label{lem:cty}
Let $\varepsilon\in (0,1]$.
Assume that the Riesz potential $\mathbf{I}^{|\mu|}_1(x,t;r)$ is locally bounded in $\Omega_T$ for some $r>0$ and that
\begin{equation}\label{eq:mulim}
\lim_{r\to 0} \frac{|\mu|(Q_r(x,t))}{r^{n+1}}=0 \quad \text{locally uniformly in } (x,t)\in \Omega_T.
\end{equation}
Then there exists constants $\alpha=\alpha(n,p,\nu,L)\in(0,1)$, $c_9=c_9(n,p,\nu,L)\geq 1$, and ${R_\epsilon}={R_\epsilon}(n,p,\nu,L, M,\mu, \epsilon)\in(0, R_1)$
such that
\begin{equation}\label{eq5.2.1}
    \phi_q(Du, Q_\rho^{\lambda_M}(x_0,t_0))<\epsilon \lambda_M
\end{equation}
holds for every $(x_0,t_0)\in \tilde{Q}$ and every $\rho\in(0,\rho_\epsilon\,]$, where
$$
\rho_\epsilon=\frac{\epsilon^{2/\alpha}}{c_9}{R_\epsilon}.
$$
More specifically, the constant $R_\epsilon$ is determined in \eqref{eq5.2.5}--\eqref{eq5.2.5.1} below.
\end{proposition}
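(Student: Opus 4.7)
The plan is to adapt the iteration scheme from the proof of Theorem \ref{thm:r1}, but with $\lambda := \lambda_M$ fixed globally, so that no exit-time argument is needed. The crucial input is \eqref{eq5.2.2}, which supplies the sup-bound $(\fint_{Q^{\lambda_M}_\rho}(|Du|+s)^q\,dxdt)^{1/q}\leq \lambda_M$ uniformly at every scale $\rho\in(0,R_1]$ and every $(x_0,t_0)\in \tilde{Q}$; this plays the role of the post-exit-time bound \eqref{supq} in Lemma \ref{lem:iter} and makes the entire iteration uniform in the base point. Fixing $A:=c_8$, $B:=2560000n$, $\gamma:=2^{-4(n+3)}$, and $\delta_1:=\delta_\gamma/4$ coming from Theorem \ref{thm:iter2}, then invoking Lemmas \ref{lem:u-v}, \ref{lem:supinf}, Theorem \ref{thm:osc}, Theorem \ref{thm:iter2}, and Lemma \ref{lem:vtou} in succession (exactly as in the proof of Lemma \ref{lem:iter}), I would obtain the one-step decay
\begin{equation*}
\phi_q\bigl(Du,Q^{\lambda_M}_{\delta_1 \rho}(x_0,t_0)\bigr)\leq \tfrac{1}{4}\phi_q\bigl(Du,Q^{\lambda_M}_\rho(x_0,t_0)\bigr)+c\,\frac{|\mu|(Q^{\lambda_M}_\rho(x_0,t_0))}{\rho^{n+1}}+c\,\omega(\rho)\lambda_M
\end{equation*}
uniformly for $(x_0,t_0)\in \tilde{Q}$ and $\rho\in(0,R_1]$, with $c=c(n,p,\nu,L)$.

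Writing $r_j:=\delta_1^j R_\epsilon$ and $Q_j:=Q^{\lambda_M}_{r_j}(x_0,t_0)$ for an $R_\epsilon\in(0,R_1]$ to be determined, iterating the one-step decay and using $\phi_q(Du,Q_0)\leq 2\lambda_M$ (from \eqref{eq5.2.2}) yields
\begin{equation*}
\phi_q(Du,Q_N)\leq 2\cdot 4^{-N}\lambda_M+\tfrac{4c}{3}\sup_{0\leq j\leq N-1}\Big[\frac{|\mu|(Q_j)}{r_j^{n+1}}+\omega(r_j)\lambda_M\Big]
\end{equation*}
for every integer $N\geq 1$. I would then decouple the two summands. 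First, pick the smallest $N$ with $2\cdot 4^{-N}<\epsilon\lambda_M/(2\delta_1^{-(n+2)/q})$ so that, after the standard volume comparison (cf.\ \eqref{eq3.135}) that extends the estimate from the discrete scale $r_N$ to all $\rho\in(0,r_N]$, the first term contributes at most $\epsilon\lambda_M/2$ to $\phi_q(Du,Q^{\lambda_M}_\rho)$. Second, choose $R_\epsilon\in(0,R_1]$ so small that
\begin{equation*}
\sup_{(x,t)\in\tilde{Q}}\sup_{r\in(0,R_\epsilon]}\frac{|\mu|(Q^{\lambda_M}_r(x,t))}{r^{n+1}}+\lambda_M\sup_{r\in(0,R_\epsilon]}\omega(r)<\frac{3\epsilon\lambda_M}{8c\,\delta_1^{-(n+2)/q}};
\end{equation*}
this is possible by \eqref{eq:mulim} (after enclosing $Q^{\lambda_M}_r(x,t)$ in the standard parabolic cylinder $Q_{c_1 r}(x,t)$ with $c_1:=\max\{1,\lambda_M^{(2-p)/2}\}$) together with the Dini continuity of $\omega$ at $0$. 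Setting $\rho_\epsilon:=\delta_1^N R_\epsilon$ and using $4^{-N}\sim\epsilon$ together with $\delta_1^N=(4^{-N})^{\log(1/\delta_1)/\log 4}$, I would arrive at $\rho_\epsilon = R_\epsilon\,\epsilon^{2/\alpha}/c_9$ with
\begin{equation*}
\alpha:=\frac{2\log 4}{\log(1/\delta_1)}\in(0,1),
\end{equation*}
depending only on $n,p,\nu,L$ through $\delta_1$.

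The main obstacle is establishing the one-step decay estimate uniformly in $(x_0,t_0)$. This requires checking that the hypotheses of Lemma \ref{lem:supinf} hold at every scale $r_j\in(0,R_\epsilon]$---that $|\mu|(Q^{\lambda_M}_{r_j})/r_j^{n+1}$ and $\omega(r_j)$ both lie below the threshold $\delta_1^{(n+2)/q}\theta^{1/q}/(2c_7)$ for an appropriate fixed $\theta$---and that the hypothesis of Theorem \ref{thm:iter2} holds, namely the lower bound $\lambda_M\leq B\sup_{\delta_\gamma Q^{\lambda_M}_r}\|Dv\|$ in the cylinder appearing from Lemma \ref{lem:supinf}. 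The former is arranged by shrinking $R_\epsilon$ in accordance with \eqref{eq:mulim} and the continuity of $\omega$ at $0$; the latter is secured exactly as in the derivation of \eqref{vinf2}, by combining the sup bound for $Dv$ from Lemma \ref{lem:supinf} with the oscillation estimate from Theorem \ref{thm:osc} and the definition of the stopping index $k$. The remainder of the argument is a direct translation of Steps 3--4 in the proof of Theorem \ref{thm:r1}, with the exit-time $i_e$ replaced by $0$ and with $\lambda=\lambda_M$ throughout.
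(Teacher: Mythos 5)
The claimed uniform one-step decay
\begin{equation*}
\phi_q\bigl(Du,Q^{\lambda_M}_{\delta_1 \rho}(x_0,t_0)\bigr)\leq \tfrac{1}{4}\phi_q\bigl(Du,Q^{\lambda_M}_\rho(x_0,t_0)\bigr)+c\,\frac{|\mu|(Q^{\lambda_M}_\rho(x_0,t_0))}{\rho^{n+1}}+c\,\omega(\rho)\lambda_M
\end{equation*}
does not hold uniformly for all $(x_0,t_0)\in \tilde{Q}$ and $\rho\in(0,R_1]$, and this is a genuine gap. To derive it you must apply Theorem \ref{thm:iter2}, which requires the nondegeneracy bound $\lambda_M\leq B\sup_{\delta_\gamma Q}\|Dv\|$. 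With your fixed $B=2560000n$ this requires, via Lemma \ref{lem:supinf}, that $\fint_{Q_\rho^{\lambda_M}}|Du|^q\,dxdt$ be comparable to $\lambda_M^q$ (not $\epsilon^q\lambda_M^q$), and that is simply false at small scales near a point where $Du$ is small or vanishes. You claim this lower bound ``is secured exactly as in the derivation of \eqref{vinf2},'' but \eqref{vinf2} is obtained from the exit-time condition $C_{i+k}>\lambda/100$ --- and it is precisely the exit-time argument that you have discarded by setting $\lambda:=\lambda_M$ and $i_e:=0$. There is no substitute in your proposal for that condition: once $\fint_{Q_j}|Du|^q$ drops below $(\lambda_M/B)^q$ the one-step decay is no longer available, and the quantity $\fint_{Q_{j'}}|Du|^q$ can rise again at yet smaller scales $j'>j$, so you cannot stop iterating either.

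The paper's proof of Proposition \ref{lem:cty} avoids this obstruction by making the parameters of Theorem \ref{thm:iter2} depend on $\epsilon$: it takes $B=400n/\epsilon$ and $\gamma=\epsilon/2^{4(n+3)}$. Then at \emph{each} scale $i\geq 1$ there is a clean dichotomy: either $\bigl(\fint_{Q_i}|Du|^q\,dxdt\bigr)^{1/q}<\tfrac{\epsilon}{10}\lambda_M$, in which case $\phi_q(Du,Q_i)<\tfrac{\epsilon}{10}\lambda_M$ trivially; or the lower bound $\sup_{Q_i}\|Dv\|\geq\epsilon\lambda_M/(400n)=\lambda_M/B$ holds, and a \emph{single} application of Theorem \ref{thm:iter2} followed by Lemma \ref{lem:vtou} yields $\phi_q(Du,Q_i)\leq\tfrac{\epsilon}{4}\phi_q(Du,Q_{i-1})+\text{small}$, after which the a priori bound $\phi_q(Du,Q_{i-1})\leq\lambda_M$ from \eqref{eq5.2.2} gives $\phi_q(Du,Q_i)<\epsilon\lambda_M$. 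The factor $\epsilon$ is gained in one step, not by iterating $N\sim\log(1/\epsilon)$ times; correspondingly, the form $\rho_\epsilon=\epsilon^{2/\alpha}R_\epsilon/c_9$ comes from the formula $\delta_\gamma=\gamma^{1/\alpha_2}/(c(A)B^{1/\alpha_1})$ in \eqref{eq3.203} applied with $\gamma\sim\epsilon$ and $B\sim1/\epsilon$, not from compounding a fixed $\delta_1$. If you want to salvage the iterative route with $B$ and $\gamma$ independent of $\epsilon$, you would find that the case where the nondegeneracy fails only yields $\phi_q(Du,Q_i)\lesssim c\lambda_M$ for a fixed constant $c$ determined by $B$, not $\epsilon\lambda_M$; the $\epsilon$-dependence of $B$ is therefore essential to the case split, and the proposal as written does not close.
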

\begin{proof}
First we take
\begin{equation}\label{eq5.2.3}
    \lambda=\lambda_M, \quad A=c_8, \quad B=\frac{400n}{\epsilon},
\quad \gamma=\frac{\epsilon}{2^{4(n+3)}},
\end{equation}
where $c_8=c_8(n,p,\nu,L)$ is the same constant as in Lemma \ref{lem:supinf}. Then we choose $\delta_\gamma=\delta_\gamma(n,p,\nu,L,\epsilon)\in(0,1/2)$ as in Theorem \ref{thm:iter2} with the choices of $A$, $B$, $\gamma$ in \eqref{eq5.2.3} and we set $\delta_1=\delta_\gamma/4$. Then by \eqref{eq3.203}, we have
\begin{equation}\label{eq5.2.4}
\delta_1=\frac{\epsilon^{2/\alpha}}{c_9}
\end{equation}
for some constants $\alpha\in(0,1)$ and $c_9\geq 1$ both depending on $n$, $p$, $\nu$, $L$.
Next, we take ${R_\epsilon}\in(0,R_1)$ such that
\begin{equation}\label{eq5.2.5}
    \omega({R_\epsilon})\leq  \frac{\delta_1^\frac{n+2}{q} \epsilon}{800c_7}
\end{equation}
and that
\begin{equation}\label{eq5.2.5.1}
    \sup_{(x_0,t_0)\in\tilde{Q}}\sup_{0<\rho\leq \lambda_M^{(2-p)/2}{R_\epsilon}} \frac{|\mu|(Q_\rho(x_0,t_0))}{\rho^{n+1}}\leq \frac{\delta_1^\frac{n+2}{q} \epsilon}{800c_7\lambda_M^{(n+1)(2-p)/2}},
\end{equation}
where $c_7=c_7(n,p,\nu,L)$ is the same constant as in Lemma \ref{lem:u-v}.
Thus we have
\begin{equation}\label{eq5.2.6}
\begin{aligned}
     \sup_{(x_0,t_0)\in\tilde{Q}}\sup_{0<\rho\leq {R_\epsilon} }\frac{|\mu|(Q_\rho^{\lambda_M}(x_0,t_0))}{\rho^{n+1}}&\leq  \sup_{(x_0,t_0)\in\tilde{Q}}\sup_{0<\rho\leq {R_\epsilon} }\frac{|\mu|(Q_{\lambda_M^{(2-p)/2}\rho}(x_0,t_0))}{\rho^{n+1}}
     \\&\leq \frac{\delta_1^\frac{n+2}{q} \epsilon}{800c_7}\leq \frac{\delta_1^\frac{n+2}{q} \epsilon}{800c_7}\lambda_M.
     \end{aligned}
\end{equation}
For $i\in\mathbb{N}$, we define
\begin{equation*}
    Q_i:=Q_{r_i}^{\lambda_M}(x_0,t_0), \quad r_i:=\delta_1^i r, \quad r\in (\delta_1 {R_\epsilon},{R_\epsilon}].
\end{equation*}
We will prove that for every $i\geq 1$, it holds that
\begin{equation}\label{eq5.2.8}
    \phi_q(Du,Q_i)<\epsilon \lambda_M.
\end{equation}
Let $i\geq 1$. We consider two different cases. First, suppose that
\begin{equation}\label{eq5.2.9}
   \Big(\fint_{Q_i} |Du|^q \,dxdt\Big)^{1/q}<\frac{\epsilon}{10}\lambda_M.
\end{equation}
In this case, the definition of $\phi_q$ implies that
$$
\phi_q(Du,Q_i)\leq \Big(\fint_{Q_i} |Du|^q \,dxdt\Big)^{1/q}<\frac{\epsilon}{10}\lambda_M
$$
and therefore \eqref{eq5.2.8} holds. On the other hand, suppose that \eqref{eq5.2.9} does not hold. Then we have
\begin{equation}\label{eq5.2.10}
   \Big(\fint_{Q_i} |Du|^q \,dxdt\Big)^{1/q}\geq\frac{\epsilon}{10}\lambda_M.
\end{equation}
Let $w$ and $v$ be the solutions defined in \eqref{eqa:w} and \eqref{eqa:v} respectively, with the choices $\rho=r_{i-1}$ and $\lambda=\lambda_M$. By \eqref{eq5.2.2}, \eqref{eq5.2.5}, and \eqref{eq5.2.6}, we can apply Lemma \ref{lem:supinf} with the parameters
$\rho=r_{i-1}$, $\lambda=\lambda_M$, $\delta=\delta_1$, and  $\theta=\epsilon^q/20$. Thus using \eqref{eq5.2.10} we obtain
$$
\frac{\lambda_M}{B}=\frac{\epsilon\lambda_M}{400n}\leq \sup_{Q_i}\|Dv\|, \quad s+\sup_{\frac{1}{4}Q_{i-1}} \|Dv\|\leq c_8\lambda_M=A\lambda_M.
$$
Recalling that $\delta_1=\delta_\gamma/4$ and applying Theorem \ref{thm:iter2} in $\frac{1}{4}Q_{i-1}=\frac{1}{4}Q_{r_{i-1}}^{\lambda_M}$, we have
\begin{equation}\label{eq5.2.11}
    \phi_q(Dv,Q_i)=\phi(Dv, \frac{\delta_\gamma}{4}Q_{i-1})\leq \frac{\epsilon}{2^{4(n+3)}}\phi_q(Dv,\frac{1}{4}Q_{i-1}).
\end{equation}
By \eqref{eq5.2.2}, \eqref{eq5.2.6}, and \eqref{eq5.2.11}, we can apply Lemma \ref{lem:vtou} and get
\begin{align*}
    \phi_q(Du,Q_i)&\leq \frac{\epsilon}{4} \phi_q(Du,Q_{i-1})+c_7\delta_1^{-(n+2)/q} \Big[\frac{|\mu|(Q_{i-1}}{r_{i-1}^{n+1}}\Big]+c_7 \delta_1^{-(n+2)/q} \omega(r_{i-1})\lambda_M\\
    &\leq \frac{\epsilon}{4} \lambda_M+\frac{\epsilon}{400} \lambda_M< \epsilon \lambda_M.
\end{align*}
The proof of \eqref{eq5.2.8} is  completed.
Now we take $\rho_\epsilon=\delta_1 {R_\epsilon}$, where $\delta_1$ has the form in \eqref{eq5.2.4}. Since for any $\rho\in(0,\rho_\epsilon]$, there exists $r\in (\delta_1 {R_\epsilon}, {R_\epsilon}]$ and an integer $k\geq 1$ such that
$\rho=\delta_1^k r$, \eqref{eq5.2.1} follows directly from \eqref{eq5.2.8}.
\end{proof}
A corollary of Proposition \ref{lem:cty} is Theorem \ref{thm1.8}.
\begin{proof} [Proof of Theorem \ref{thm1.8}.]
We are now able to determine the exact form of $R_\epsilon$ in Proposition \ref{lem:cty} for any $\epsilon\in(0,1)$ thanks to the assumption \eqref{asp5}. By \eqref{eq5.2.4}, to verify \eqref{eq5.2.5} and \eqref{eq5.2.5.1}, we need to show that
$$
 \omega(R_\epsilon)\leq  \frac{\epsilon^{\frac{2(n+2)}{q\alpha}+1} }{800c_7c_9^{(n+2)/q}},
$$
and that
$$
    \sup_{(x_0,t_0)\in\tilde{Q}}\sup_{0<\rho\leq \lambda_M^{(2-p)/2}R_\epsilon} \frac{|\mu|(Q_\rho(x_0,t_0))}{\rho^{n+1}}\leq \frac{\epsilon^{\frac{2(n+2)}{q\alpha}+1}} {800c_7 c_9^{(n+2)/q}\lambda_M^{(n+1)(2-p)/2}}.
$$
Thus using \eqref{asp5}, it is sufficient to take
$R_\epsilon\in(0,R_1)$ such that
$$
R_\epsilon\leq \Big(\frac{\epsilon^{\frac{2(n+2)}{q\alpha}+1}} {800c_D c_7 c_9^{(n+2)/q}\lambda_M^{(n+2)(2-p)/2}}\Big)^{1/\delta}
:=\frac{\epsilon^{1/\theta}}{c_{10}},
$$
where $\theta=\theta(n,p,\nu,L,\delta)\in(0,1)$ and $c_{10}=c_{10}(n,p,\nu,L,\delta,c_D,M)\geq 1$.
Now we take
$$R_\epsilon=\min\{1,R_1\}\frac{\epsilon^{1/\theta}}{c_{10}}\quad \text{and} \quad \rho_\epsilon=\min\{1,R_1\}\frac{\epsilon^{2/\alpha+1/\theta}}{c_9c_{10}}.$$ Thus we can apply Lemma \ref{lem:cty} and obtain
$$
\phi_q(Du, Q_{\rho_\epsilon}^{\lambda_M}(x_0,t_0))<\epsilon \lambda_M.
$$
Since $\epsilon$ is an arbitrary number in $(0,1)$,  the last inequality implies that
\begin{equation}\label{eq:holder5.1}
\phi_q(Du, Q_\rho^{\lambda_M}(x_0,t_0))< c\rho^{\beta}
\end{equation}
holds for every $(x_0,t_0)\in \tilde{Q}$ and every $\rho\in (0,R_2]$, where $R_2:=\min\{1,R_1\}/(c_9 c_{10})\in (0,1)$, $c>0$ is a constant depending on $n$, $p$, $\nu$, $L$, $\delta$, $c_D$, $M$, and $R_1$, and
$$\beta:=\frac{1}{\frac{2}{\alpha}+\frac{1}{\theta}}\in(0,1)$$
depends only on $n$, $p$, $\nu$, $L$, and $\delta$.

We are ready to prove $Du\in C^{0,{\beta}}(\tilde{Q})$ using \eqref{eq:holder5.1}. Let $(x_1,t_1),\, (x_2,t_2)\in \tilde{Q}\cap \mathcal{L}$, such that $\rho:=|(x_1,t_1)-(x_2,t_2)|_{\text{par}}\leq R_2/2$. Here $\mathcal{L}\equiv \mathcal{L}_{\lambda_M}$ is the set of Lebesgue points of $Du$ defined in \eqref{Lebesgue}. Without loss of generality, we assume that $t_1\leq t_2$.
Arguing exactly as in \eqref{eq:m-du}, we know that
\begin{equation}\label{limit5.1}
\lim_{r\to 0} |\mathbf{m}(Q_{r}^{\lambda_M}(x_1,t_1)-Du(x_1,t_1)|=0.
\end{equation}
Using \eqref{eq:mdiff}, \eqref{limit5.1}, \eqref{eq:holder5.1}, and the fact that $q\in(1/2, 1)$, we obtain
\begin{equation}\label{sum5.1.1}
\begin{aligned}
    &|\mathbf{m}(Du, Q_\rho^{\lambda_M}(x_1,t_1))-Du(x_1,t_1)|\\&\leq \sum_{j=0}^\infty |\mathbf{m}(Du, Q_{2^{-j}\rho}^{\lambda_M}(x_1,t_1))-\mathbf{m}(Du, Q_{2^{-(j+1)}\rho}^{\lambda_M}(x_1,t_1))|\\&\leq 2^{2n+6} \sum_{j=0}^\infty \phi_{q}(Du, Q_{2^{-j}\rho}^{\lambda_M}(x_1,t_1))\leq c\,2^{2n+6} \sum_{j=0}^\infty (2^{-j}\rho)^{\beta}\leq c' \rho^{\beta}.
\end{aligned}
\end{equation}
Similarly, we have
\begin{equation}\label{sum5.1.2}
|\mathbf{m}(Du, Q_{2\rho}^{\lambda_M}(x_2,t_2))-Du(x_2,t_2)|\leq c'' \rho^{\beta}.
\end{equation}
By \eqref{sum5.1.1}, \eqref{sum5.1.2} and the triangle inequality, it holds that
\begin{equation}\label{eq:5.71}
\begin{aligned}
    &|Du(x_1,t_1)-Du(x_2,t_2)|
    \\&\leq |Du(x_1,t_1)-\mathbf{m}(Du, Q_\rho^{\lambda_M}(x_1,t_1))|
    +|\mathbf{m}(Du, Q_{2\rho}^{\lambda_M}(x_2,t_2))-Du(x_2,t_2)|
    \\&\quad+|\mathbf{m}(Du, Q_\rho^{\lambda_M}(x_1,t_1))-\mathbf{m}(Du, Q_{2\rho}^{\lambda_M}(x_2,t_2))|
    \\ &\leq c\rho^{\beta}+|\mathbf{m}(Du, Q_\rho^{\lambda_M}(x_1,t_1))-\mathbf{m}(Du, Q_{2\rho}^{\lambda_M}(x_2,t_2))|.
\end{aligned}
\end{equation}
Recalling the definition of parabolic distance and the assumption that $t_1\leq t_2$, we know that $Q_\rho^{\lambda_M}(x_1,t_1)\subset Q_{2\rho}^{\lambda_M}(x_2,t_2)$ and therefore by \eqref{eq:mdiff}, \eqref{eq:holder5.1} and the fact that $q\in(\frac{1}{2}, 1)$, we obtain
\begin{equation}\label{eq5.72}
\begin{aligned}
    &|\mathbf{m}(Du, Q_\rho^{\lambda_M}(x_1,t_1))-\mathbf{m}(Du, Q_{2\rho}^{\lambda_M}(x_2,t_2))|
    \\&\leq 2\phi_q(Du,Q_\rho^{\lambda_M}(x_1,t_1))+2^{2n+5} \phi_q(Du, Q_{2\rho}^{\lambda_M}(x_2,t_2))\leq c \rho^{\beta}.
\end{aligned}
\end{equation}
Combining \eqref{eq:5.71}, \eqref{eq5.72} and using the fact that $\rho=|(x_1,t_1)-(x_2,t_2)|_{\text{par}}$, we have
$$
|Du(x_1,t_1)-Du(x_2,t_2)|\leq c \rho^{\beta}= c\,|(x_1,t_1)-(x_2,t_2)|_{\text{par}}^{\beta},
$$
where $c$ is a constant depending only on $n$, $p$, $\nu$, $L$, $\delta$, $c_D$, $M$, and $R_1$. Recall that by Theorem \ref{thm:int}, $Du$ is bounded in $\tilde{Q}$. Therefore the last estimate implies that $Du\in C^{0,\beta}(\tilde{Q})$ since $|\tilde{Q}\backslash (\mathcal{L}\cap \tilde{Q})|=0$. The proof is now completed.
\end{proof}

Next, we turn to the proofs of Theorem \ref{thm:cty} and its corollaries. We start with the following proposition.
\begin{proposition}\label{lem:conv}
Let $\varepsilon\in (0,1]$.
Under the same assumptions as in Theorem \ref{thm:cty}, there exists a radius $R_\epsilon'\in(0, R_1)$ depending only on $n$, $p$, $\nu$, $L$, $\delta$, $c_D$, $M$, and $R_1$, such that
\begin{equation}\label{eq 5.8.1}
    |\mathbf{m}(Du,Q_{\rho_1}^{\lambda_M}(x_0,t_0))-\mathbf{m}(Du,Q_{\rho_2}^{\lambda_M}(x_0,t_0))|\leq 3\epsilon \lambda_M
\end{equation}
holds for every $\rho_1,\,\rho_2\in (0,R_\epsilon']$ and
$(x_0,t_0)\in \tilde{Q}$.
\end{proposition}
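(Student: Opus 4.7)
The strategy is to derive the uniform Cauchy property of $\{\mathbf{m}(Du,Q_\rho^{\lambda_M}(x_0,t_0))\}_{\rho>0}$ by combining Proposition \ref{lem:cty} with the difference-of-means inequality \eqref{eq:mdiff}, iterated along a geometric chain of intrinsic cylinders. I first fix $\varepsilon\in(0,1]$ and an auxiliary parameter $\tilde\varepsilon=\tilde\varepsilon(\varepsilon,n,p,\nu,L)\in(0,\varepsilon)$ to be chosen. Applying Proposition \ref{lem:cty} with this $\tilde\varepsilon$ in place of $\varepsilon$ produces $\delta_1=\delta_1(\tilde\varepsilon)$, a radius $R_{\tilde\varepsilon}$, and $\rho_{\tilde\varepsilon}$, with the uniform smallness bound $\phi_q(Du,Q_\rho^{\lambda_M}(x,t))<\tilde\varepsilon\lambda_M$ valid for every $\rho\in(0,\rho_{\tilde\varepsilon}]$ and $(x,t)\in\tilde Q$.

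For $(x_0,t_0)\in\tilde Q$ and $0<\rho_1\le\rho_2\le R_\varepsilon'$ (with $R_\varepsilon'\le\rho_{\tilde\varepsilon}$ still to be specified), I set $r_i:=\delta_1^i\rho_2$ and appeal to the iteration inequality established inside the proof of Proposition \ref{lem:cty},
\[
\phi_q(Du,Q_{r_i}^{\lambda_M})\le \tfrac{\tilde\varepsilon}{4}\,\phi_q(Du,Q_{r_{i-1}}^{\lambda_M})+c\,\delta_1^{-(n+2)/q}\Bigl[\omega(r_{i-1})\lambda_M+\tfrac{|\mu|(Q_{r_{i-1}}^{\lambda_M})}{r_{i-1}^{n+1}}\Bigr],
\]
whose hypotheses hold along the entire chain by Proposition \ref{lem:cty}. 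Iterating this geometric recursion and combining with \eqref{eq:mdiff} applied to each nested pair $Q_{r_{i+1}}^{\lambda_M}\subset Q_{r_i}^{\lambda_M}$ (volume ratio $\delta_1^{-(n+2)}$) yields the telescoping estimate
\[
\sum_{i\ge 0}\bigl|\mathbf{m}(Du,Q_{r_i}^{\lambda_M})-\mathbf{m}(Du,Q_{r_{i+1}}^{\lambda_M})\bigr|\le C(\tilde\varepsilon,\delta_1)\Bigl[\tilde\varepsilon\lambda_M+\sum_{i\ge 0}\omega(r_i)\lambda_M+\sum_{i\ge 0}\tfrac{|\mu|(Q_{r_i}^{\lambda_M})}{r_i^{n+1}}\Bigr].
\]
The two tail sums are dominated by $\int_0^{2\rho_2}\omega(\rho)\,d\rho/\rho$ (controlled via the Dini condition \eqref{dini}) and $\int_0^{2\rho_2}|\mu|(Q_\rho^{\lambda_M})\rho^{-(n+1)}\,d\rho/\rho$ (tending to $0$ uniformly in $(x_0,t_0)\in\tilde Q$ as $\rho_2\to 0$ by hypothesis \eqref{asp1}). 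Hence both tails can be made arbitrarily small by shrinking $\rho_2$.

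The main obstacle is the factor $\delta_1^{-(n+2)/q}$ appearing in both \eqref{eq:mdiff} and the iteration: since $\delta_1=\delta_1(\tilde\varepsilon)$ collapses as $\tilde\varepsilon\to 0$, one cannot simply send $\tilde\varepsilon\to 0$ to absorb it. I would resolve this circular dependence by first fixing $\tilde\varepsilon$ proportional to $\varepsilon$ through a universal constant depending only on $n,p,\nu,L$, so that $c\,\delta_1^{-(n+2)/q}\tilde\varepsilon\le\varepsilon$, and then shrinking $R_\varepsilon'\le\rho_{\tilde\varepsilon}$ so the $\delta_1^{-(n+2)/q}$-weighted Dini and Riesz tail integrals each drop below $\varepsilon\lambda_M$; the chain-telescope sum is then bounded by $\varepsilon\lambda_M$. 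Finally, two more applications of \eqref{eq:mdiff} compare $\mathbf{m}(Du,Q_{\rho_j}^{\lambda_M})$ to the nearest chain mean $\mathbf{m}(Du,Q_{r_{i_j}}^{\lambda_M})$ for $j=1,2$ (volume ratio at most $\delta_1^{-(n+2)}$), contributing an additional $\le\varepsilon\lambda_M$ each. The triangle inequality $|\mathbf{m}_{\rho_1}-\mathbf{m}_{\rho_2}|\le|\mathbf{m}_{\rho_1}-\mathbf{m}_{r_{i_1}}|+\sum_i|\mathbf{m}_{r_i}-\mathbf{m}_{r_{i+1}}|+|\mathbf{m}_{r_{i_2}}-\mathbf{m}_{\rho_2}|$ then yields the desired bound $3\varepsilon\lambda_M$.
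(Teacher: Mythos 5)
Your central device---iterating the single-step decay from Proposition~\ref{lem:cty} along a geometric chain and telescoping the means via \eqref{eq:mdiff}---is the right template, but two concrete steps fail as stated.

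First, the circularity you flagged is not resolvable the way you propose. You import the iteration inequality with coefficient $\tilde\varepsilon/4$, which comes from applying Theorem~\ref{thm:iter2} with $\gamma=\tilde\varepsilon/2^{4(n+3)}$ and $B\sim 1/\tilde\varepsilon$. By \eqref{eq3.203} this forces $\delta_1\sim \tilde\varepsilon^{\,1/\alpha_1+1/\alpha_2}$, hence
$\delta_1^{-(n+2)/q}\,\tilde\varepsilon \sim \tilde\varepsilon^{\,1-(n+2)(1/\alpha_1+1/\alpha_2)/q}$,
and since $\alpha_1,\alpha_2\in(0,1)$ and $q<1$ the exponent is negative (indeed $<-5$). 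Shrinking $\tilde\varepsilon$ therefore makes $c\,\delta_1^{-(n+2)/q}\tilde\varepsilon$ larger, not smaller, so no choice ``$\tilde\varepsilon$ proportional to $\varepsilon$'' can achieve $c\,\delta_1^{-(n+2)/q}\tilde\varepsilon\le\varepsilon$. The paper avoids this by running the iteration with a \emph{fixed} $\gamma=2^{-4(n+3)}$ (only $B=400n/\varepsilon$ is $\varepsilon$-dependent), which yields Lemma~\ref{lem5.8.1} with the summable coefficient $1/4$ rather than $\tilde\varepsilon/4$. The resulting $\delta_1$ depends on $\varepsilon$ only through $B$, and the initial oscillation $\Phi_j$ is then made small enough to beat $\delta_1^{-(n+2)/q}$ by applying Proposition~\ref{lem:cty} with the \emph{derived} parameter $\delta_1^{2(n+2)/q}\varepsilon/800$ (see \eqref{eq5.8.6}); since $\delta_1$ was fixed \emph{before} this application, there is no loop.

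Second, you assert that the hypotheses of the iteration inequality ``hold along the entire chain by Proposition~\ref{lem:cty}.'' That is not so: the iteration step (Lemma~\ref{lem5.8.1}) requires the \emph{lower} bound \eqref{eq5.8.9} on $\big(\fint_{Q_{i+1}}|Du|^q\big)^{1/q}$, needed to verify $\lambda_M\le B\sup\|Dv\|$ for Theorem~\ref{thm:iter2}, whereas Proposition~\ref{lem:cty} only supplies the \emph{upper} bound on the mean oscillation. When the lower bound fails the iteration cannot be applied, and the paper handles this via the exit-time dichotomy in Lemma~\ref{lem:diff5}: on the bad set $L$ of scales where the $L^q$ average is below $\tfrac{\varepsilon}{10}\lambda_M$, the means $\mathbf{m}_j$, $\mathbf{m}_k$ are shown to be individually small via \eqref{eq:mbound} and one extra application of \eqref{eq:mdiff}. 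Your argument has no counterpart for this case, so the telescoping estimate is not justified along an arbitrary chain.
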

\begin{proof}
We still use an exit time argument similar to the proof of \cite[Theorem 1.6]{kuusi2013mingione}.

\emph{Step 1: Choices of constants.}
First, we take
\begin{equation}\label{eq5.8.2}
     \lambda=\lambda_M, \quad A=c_8, \quad B=\frac{400n}{\epsilon},
\quad \gamma={2^{-4(n+3)}},
\end{equation}
where $c_8=c_8(n,p,\nu,L)$ is the same constant as in Lemma \ref{lem:supinf}. Then we choose $\delta_\gamma=\delta_\gamma(n,p,\nu,L,\epsilon)\in(0,1/2)$ as in Theorem \ref{thm:iter2} with the choices of $A$, $B$, $\gamma$ in \eqref{eq5.8.2} and we set $\delta_1=\delta_\gamma/4$.

Next, We take ${R_\epsilon'}\in(0,R_1)$ depending only on $n$, $p$, $\nu$, $L$, $\delta$, $c_D$, $M$, and $R_1$, such that
\begin{equation}\label{eq5.8.3.0}
    \int_{0}^{2R_\epsilon'} \omega(\rho) \frac{d\rho}{\rho} \leq \frac{\delta_1^\frac{4(n+2)}{q} \epsilon}{800c_7},
\end{equation}

\begin{equation}\label{eq5.8.3}
    \omega({R_\epsilon'})\leq  \frac{\delta_1^\frac{n+2}{q} \epsilon}{800c_7},
\end{equation}
\begin{equation}\label{eq5.8.4}
    \sup_{(x_0,t_0)\in\tilde{Q}}\int_{0}^{2 \lambda_M^{(2-p)/2}{R_\epsilon'}} \frac{|\mu|(Q_\rho(x_0,t_0))}{\rho^{n+1}}\frac{d\rho}{\rho}\leq \frac{\delta_1^\frac{4(n+2)}{q} \epsilon}{800c_7\lambda_M^{(n+1)(2-p)/2}},
\end{equation}

\begin{equation}\label{eq5.8.5}
    \sup_{(x_0,t_0)\in\tilde{Q}}\sup_{0<\rho\leq \lambda_M^{(2-p)/2}{R_\epsilon'}} \frac{|\mu|(Q_\rho(x_0,t_0))}{\rho^{n+1}}\leq \frac{\delta_1^\frac{n+2}{q} \epsilon}{800c_7\lambda_M^{(n+1)(2-p)/2}},
\end{equation}
and
\begin{equation}\label{eq5.8.6}
    \sup_{(x_0,t_0)\in\tilde{Q}}\sup_{0<\rho\leq {R_\epsilon'}} \phi_q(Du, Q_\rho^{\lambda_M}(x_0,t_0))\leq \frac{\delta_1^{\frac{2(n+2)}{q}}\epsilon}{800},
\end{equation}
where $c_7=c_7(n,p,\nu,L)$ is the same constant as in Lemma \ref{lem:u-v}.
Let us briefly explain why we can choose such an $R_\epsilon'$. First, \eqref{eq5.8.3.0} and \eqref{eq5.8.3} are possible since $\omega$ is a nondecreasing function satisfying the Dini condition \eqref{dini}. Moreover, \eqref{eq5.8.4} and \eqref{eq5.8.5} are possible by using the assumption \eqref{asp1}. Finally, \eqref{eq5.8.6} is possible by Proposition \ref{lem:cty}, which is applicable since the assumption \eqref{asp1} directly implies \eqref{eq:mulim}.

Arguing exactly as in \eqref{eq5.2.6}, the bound \eqref{eq5.8.5} implies that
\begin{equation}\label{eq5.8.7}
     \sup_{(x_0,t_0)\in\tilde{Q}}\sup_{0<\rho\leq {R_\epsilon'} }\frac{|\mu|(Q_\rho^{\lambda_M}(x_0,t_0))}{\rho^{n+1}}\leq \frac{\delta_1^\frac{n+2}{q} \epsilon}{800c_7}\lambda_M.
\end{equation}
Now we fix $(x_0,t_0)\in \tilde{Q}$ and define a sequence of shrinking intrinsic cylinders for $i\in\mathbb{N}$, namely,
\begin{equation*}
    Q_i:=Q_{r_i}^{\lambda_M}(x_0,t_0), \quad r_i:=\delta_1^i {R_\epsilon'}.
\end{equation*}

\emph{Step 2: The iteration step.}
We have the following lemma.
\begin{lemma}\label{lem5.8.1}
Assume that
\begin{equation}\label{eq5.8.9}
   \Big(\fint_{Q_{i+1}} |Du|^q \,dxdt\Big)^{1/q}\geq\frac{\epsilon}{10}\lambda_M.
\end{equation}
Then we have
\begin{equation}\label{eq5.8.10}
   \phi_q(Du,Q_{i+1})\leq \frac{1}{4} \phi_q(Du,Q_{i})+c_7\delta_1^{-(n+2)/q} \Big[\frac{|\mu|(Q_{i}}{r_{i}^{n+1}}\Big]+c_7 \delta_1^{-(n+2)/q} \omega(r_{i})\lambda_M.
\end{equation}
\end{lemma}
\begin{proof}
Let $w$ and $v$ be the solutions defined in \eqref{eqa:w} and \eqref{eqa:v} respectively, with the choices $\rho=r_{i}$ and $\lambda=\lambda_M$. By \eqref{eq5.2.2} and \eqref{eq5.8.7}, we can apply Lemma \ref{lem:supinf} with choices of parameters
$\rho=r_{i}$, $\lambda=\lambda_M$, $\delta=\delta_1$, and  $\theta=\epsilon^q/20$. Thus using \eqref{eq5.8.9} we obtain
$$
\frac{\lambda_M}{B}=\frac{\epsilon\lambda_M}{400n}\leq \sup_{Q_{i+1}}\|Dv\|, \quad s+\sup_{\frac{1}{4}Q_{i}} \|Dv\|\leq c_8\lambda_M=A\lambda_M.
$$
Recalling that $\delta_1=\delta_\gamma/4$ and applying Theorem \ref{thm:iter2} in $\frac{1}{4}Q_{i}=\frac{1}{4}Q_{r_{i}}^{\lambda_M}$, we have
\begin{equation}\label{eq5.8.11}
    \phi_q(Dv,Q_{i+1})=\phi(Dv, \frac{\delta_\gamma}{4}Q_{i})\leq {2^{-4(n+3)}}\phi_q(Dv,\frac{1}{4}Q_{i}).
\end{equation}
By \eqref{eq5.2.2}, \eqref{eq5.8.3}, \eqref{eq5.8.7},  and \eqref{eq5.8.11}, we can apply Lemma \ref{lem:vtou} (with $\epsilon=1$) and get \eqref{eq5.8.10}.
\end{proof}
\emph{Step 3: Exit time argument.}
The main result we want to prove is as follows.
\begin{lemma}\label{lem:diff5}
It holds that
\begin{equation*}
    |\mathbf{m}(Du,Q_j)-\mathbf{m}(Du,Q_k)|< {\epsilon\lambda_M}, \quad \forall \;0\leq j\leq k.
\end{equation*}
\end{lemma}
\begin{proof}
For simplicity, we still denote  $\mathbf{m}_i:=\mathbf{m}(Du,Q_i)$ and $\Phi_i:=\phi_q(Du, Q_i)$ for $i\geq 0$.
We denote the set
\begin{equation}\label{def:setl}
L:=\Big\{i\in \mathbb{N}:\; \Big(\fint_{Q_i} |Du|^q \,dxdt\Big)^{1/q}<\frac{\epsilon}{10}\lambda_M\Big\}.
\end{equation}
We can assume $0\leq j < k$ and there are two different cases:
$$
L\cap \{j+1,\ldots,k\}=\emptyset, \quad \text{or} \quad L\cap \{j+1,\ldots,k\}\neq\emptyset.
$$
\emph{Case 1: $L\cap \{j+1,\ldots,k\}=\emptyset$.}
By the definition of the set $L$ in \eqref{def:setl}, we can apply Lemma \ref{lem5.8.1} for $i\in \{j,\ldots,k-1\}$ and obtain
\begin{equation}\label{eq5.8.16}
    \Phi_{i+1}\leq \frac{1}{4} \Phi_{i} +c_7\delta_1^{-(n+2)/q} \Big[\frac{|\mu|(Q_{i})}{r_{i}^{n+1}}\Big]+c_7 \delta_1^{-(n+2)/q} \omega(r_{i})\lambda_M.
\end{equation}
Summing up \eqref{eq5.8.16} for $i\in \{j,\ldots,k-1\}$, and using standard manipulations as in the proof of \eqref{sumA}, we have
\begin{equation}\label{eq5.8.18}
\begin{aligned}
    &\sum_{i=j}^{k} \Phi_i\leq 2\Phi_j+2c_7\delta_1^{-(n+2)/q}\sum_{i=j}^{k-1} \frac{|\mu|(Q_{i})}{r_{i}^{n+1}}+2c_7 \delta_1^{-(n+2)/q} \sum_{i=j}^{k-1}\omega(r_{i})\lambda_M\\
    &\leq \frac{\delta_1^{4(n+2)/q}\epsilon}{400}+2c_7\delta_1^{-(n+2)/q}\sum_{i=j}^{k-1} \frac{|\mu|(Q_{i})}{r_{i}^{n+1}}+2c_7 \delta_1^{-(n+2)/q} \sum_{i=j}^{k-1}\omega(r_{i})\lambda_M.
\end{aligned}
\end{equation}
Here we also used \eqref{eq5.8.6} in the last inequality.
Using \eqref{eq:int:sum} (with $\lambda=1$), \eqref{eq5.8.4}, and the fact that $Q_i\subset Q_{\lambda_M^{(2-p)/2} r_i}(x_0,t_0)$, we have
\begin{equation}\label{eq5.8.19}
\begin{aligned}
\sum_{i=0}^\infty \frac{|\mu|(Q_i)}{r_i^{n+1}}&\leq \sum_{i=0}^\infty \frac{|\mu|\big(Q_{\lambda_M^{(2-p)/2} r_i}(x_0,t_0)\big)}{r_i^{n+1}}\\
&\leq \lambda_M^{(n+1)(2-p)/2} \delta_1^{-(n+2)} \int_{0}^{2 \lambda_M^{(2-p)/2}{R_\epsilon'}} \frac{|\mu|(Q_\rho(x_0,t_0))}{\rho^{n+1}}\frac{d\rho}{\rho}
\\&\leq \frac{\delta_1^{3(n+2)/q} \epsilon}{800c_7}.
\end{aligned}
\end{equation}
Similarly, from \eqref{eq5.8.3.0} we have
\begin{equation}\label{eq5.8.20}
  \sum_{i=0}^\infty \omega(r_i)\lambda_M\leq \frac{\delta_1^{3(n+2)/q} \epsilon}{800c_7}.
\end{equation}
Combining \eqref{eq5.8.18}, \eqref{eq5.8.19}, and \eqref{eq5.8.20} we obtain
$$
 \sum_{i=j}^{k} \Phi_i \leq \frac{\delta_1^{2(n+2)/q}\epsilon}{100}.
$$
By \eqref{eq:mdiff}, the triangle inequality, and the previous inequality, it holds that
\begin{align*}
    |\mathbf{m}_k-\mathbf{m}_j|\leq \sum_{i=j}^{k-1} |\mathbf{m}_{i+1}-\mathbf{m}_i|\leq 4\delta_1^{-(n+2)/q} \sum_{i=j}^k \Phi_i\leq \frac{\epsilon \lambda_M}{25}.
\end{align*}
\emph{Case 2: $L\cap \{j+1,\ldots,k\}\neq\emptyset$.}
We prove in this case that
\begin{equation*}
    |\mathbf{m}_j|< \frac{\epsilon\lambda_M}{2} \quad \text{and} \quad |\mathbf{m}_k|< \frac{\epsilon\lambda_M}{2}.
\end{equation*}
We only give the proof of the former inequality and the proof for the latter is similar.
By the assumption that $L\cap \{j+1,\ldots,k\}\neq\emptyset$, we can define
$j':=\min\{l\in L:\, l\geq j+1\}$ and we have $j'\in L$. Thus by \eqref{eq:mbound} and the fact that $q\in(1/2,1)$, we obtain
\begin{equation}\label{bound:mj1}
  |\mathbf{m}_{j'}|\leq 4\Big(\fint_{Q_j}|Du(x,t)|^q \,dxdt\Big)^{1/q}< \frac{2\epsilon\lambda_M}{5}.
\end{equation}
There are two possibilities, namely, $j'=j+1$ or $j'>j+1$. First, we assume $j'=j+1$. Using \eqref{eq:mdiff} and \eqref{eq5.8.6}, we have
\begin{equation}\label{mj-mj1}
    |\mathbf{m}_{j}-\mathbf{m}_{j+1}|\leq 2\Phi_{j+1}+2 \delta_1^{-(n+2)/q} \Phi_{j}\leq \frac{\epsilon}{200}\leq \frac{\epsilon \lambda_M}{200}.
\end{equation}
Therefore, by the triangle inequality, \eqref{bound:mj1}, and \eqref{mj-mj1},
$$
|\mathbf{m}_j|
\le |\mathbf{m}_j-\mathbf{m}_{j+1}|
+|\mathbf{m}_{j'}|<\frac{\epsilon \lambda_M}{25}+\frac{2\epsilon \lambda_M}{5}< \frac{\epsilon\lambda_M}{2}.
$$
Otherwise, we have $j'>j+1$. Then by the definition of $j'$, we know that
$L\cap \{j+1,\ldots, j'-1\}=\emptyset$. Therefore we can apply Lemma \ref{lem5.8.1} for $i\in\{j,\ldots, j'-2\}$. From now on, we can argue exactly as in Case 1 to get
\begin{equation}\label{mj-mj-1}
|\mathbf{m}_j-\mathbf{m}_{j'-1}|\leq \frac{\epsilon \lambda_M}{25}.
\end{equation}
Again using \eqref{eq:mdiff} and \eqref{eq5.8.6}, we have
\begin{equation}\label{mj-1-mj}
    |\mathbf{m}_{j'-1}-\mathbf{m}_{j'}|\leq 2\Phi_{j'}+2 \delta_1^{-(n+2)/q} \Phi_{j'-1}\leq \frac{\epsilon}{200}\leq \frac{\epsilon \lambda_M}{200}.
\end{equation}
Therefore, by the triangle inequality, \eqref{bound:mj1}, \eqref{mj-mj-1}, and \eqref{mj-1-mj},
$$
|\mathbf{m}_j|
\le |\mathbf{m}_j-\mathbf{m}_{j'-1}|
+|\mathbf{m}_{j'-1}-\mathbf{m}_{j'}|
+|\mathbf{m}_{j'}|<\frac{\epsilon \lambda_M}{25}+\frac{\epsilon \lambda_M}{200}+\frac{2\epsilon\lambda_M}{5}< \frac{\epsilon\lambda_M}{2}.
$$
The proof of the lemma is now completed.
\end{proof}
\emph{Step 4: Conclusion.}
For any $\rho_1,\;\rho_2\in(0,{R_\epsilon'}]$, there exists two integers $j,\,k\geq 0$ such that
$$
\delta_1^{j+1} {R_\epsilon'} <\rho_1\leq \delta_1^j {R_\epsilon'} \quad \text{and} \quad \delta_1^{k+1} {R_\epsilon'} <\rho_2\leq \delta_1^k {R_\epsilon'}.
$$
By \eqref{eq:mdiff}, we have
\begin{align*}
  &|\mathbf{m}(Du,Q_{\rho_1}^{\lambda_M}(x_0,t_0))-\mathbf{m}(Du,Q_{j})|\\
  &\leq 2 \phi_q(Du,Q_{\rho_1}^{\lambda_M}(x_0,t_0))+2 \Big(\frac{|Q_j|}{|Q_{\rho_1}^{\lambda_M}(x_0,t_0)|}\Big)^{1/q} \phi_q(Du,Q_j) \leq \frac{\epsilon}{200}.
\end{align*}
Here we used \eqref{eq5.8.6} in the last line.
Similarly, it holds that
$$
|\mathbf{m}(Du,Q_{\rho_2}^{\lambda_M}(x_0,t_0))-\mathbf{m}(Du,Q_{k})|\leq \frac{\epsilon}{200}.
$$
Thus, the estimate \eqref{eq 5.8.1} follows by using Lemma \ref{lem:diff5}, the triangle inequality, and the last two inequalities. The proposition is proved.
\end{proof}

\begin{proof}[Proof of Theorem \ref{thm:cty}.]
For any  $(x_0,t_0)\in \tilde{Q}\cap \mathcal{L}$, where $\mathcal{L}\equiv \mathcal{L}_{\lambda_M}$ is the set of Lebesgue points of $Du$ defined in \eqref{Lebesgue},
arguing exactly as in \eqref{eq:m-du}, we know that
\begin{equation*}
\lim_{r\to 0} |\mathbf{m}(Q_{r}^{\lambda_M}(x_0,t_0)-Du(x_0,t_0)|=0.
\end{equation*}
Hence by Proposition \ref{lem:conv},
\begin{equation}\label{limit5.3}
\lim_{r\to 0} |\mathbf{m}(Q_{r}^{\lambda_M}(x_0,t_0)-Du(x_0,t_0)|=0  \quad \text{uniformly in } (x_0,t_0)\in \tilde{Q}\cap \mathcal{L}.
\end{equation}
Let $(x_1,t_1),\, (x_2,t_2)\in \tilde{Q}\cap \mathcal{L}$ and  $\rho:=|(x_1,t_1)-(x_2,t_2)|_{\text{par}}$. Without loss of generality, we assume that $t_1\leq t_2$. Therefore $Q_\rho^{\lambda_M}(x_1,t_1)\subset Q_{2\rho}^{\lambda_M}(x_2,t_2)$ and by \eqref{eq:mdiff} and the fact that $q\in(\frac{1}{2}, 1)$, we have
\begin{equation*}
\begin{aligned}
    &|\mathbf{m}(Du, Q_\rho^{\lambda_M}(x_1,t_1))-\mathbf{m}(Du, Q_{2\rho}^{\lambda_M}(x_2,t_2))|
    \\&\leq 2\phi_q(Du,Q_\rho^{\lambda_M}(x_1,t_1))+2^{2n+5} \phi_q(Du, Q_{2\rho}^{\lambda_M}(x_2,t_2)).
\end{aligned}
\end{equation*}
By the triangle inequality and the previous inequality, we obtain
\begin{equation}\label{eq5.8.28}
\begin{aligned}
    &|Du(x_1,t_1)-Du(x_2,t_2)|
    \\&\leq |Du(x_1,t_1)-\mathbf{m}(Du, Q_\rho^{\lambda_M}(x_1,t_1))|
    +|\mathbf{m}(Du, Q_{2\rho}^{\lambda_M}(x_2,t_2))-Du(x_2,t_2)|
    \\&\quad+|\mathbf{m}(Du, Q_\rho^{\lambda_M}(x_1,t_1))-\mathbf{m}(Du, Q_{2\rho}^{\lambda_M}(x_2,t_2))|.
     \\&\leq |Du(x_1,t_1)-\mathbf{m}(Du, Q_\rho^{\lambda_M}(x_1,t_1))|
    +|\mathbf{m}(Du, Q_{2\rho}^{\lambda_M}(x_2,t_2))-Du(x_2,t_2)|
    \\&\quad +2\phi_q(Du,Q_\rho^{\lambda_M}(x_1,t_1))+2^{2n+5} \phi_q(Du, Q_{2\rho}^{\lambda_M}(x_2,t_2)).
\end{aligned}
\end{equation}
Using \eqref{limit5.3}, \eqref{eq5.8.28}, and Proposition \ref{lem:cty}, it follows that for
$(x_1,t_1),\, (x_2,t_2)\in \tilde{Q}\cap \mathcal{L}$,
$$
|Du(x_1,t_1)-Du(x_2,t_2)|\to 0 \quad \text{when} \,\, \rho\equiv |(x_1,t_1)-(x_2,t_2)|_{\text{par}}\to 0.
$$
Since $|\tilde{Q}\backslash \mathcal{L}|=0$, we conclude that $Du$ is continuous in $\tilde{Q}$.
\end{proof}

\begin{proof}[Proof of Corollary \ref{thm1.3}.]
By \cite[Lemma 2.1]{kuusi2013mingione}, we know that the assumption \eqref{asp2} implies \eqref{asp1}. Therefore, Corollary \ref{thm1.3} is a direct consequence of Theorem \ref{thm:cty}.
\end{proof}

\begin{proof}[Proof of Corollary \ref{thm1.4}.]
Corollary \ref{thm1.4} follows directly from Theorem \ref{thm:cty} since the assumptions \eqref{asp3} and \eqref{asp4} directly imply \eqref{asp1}.
\end{proof}

\bibliographystyle{plain}

\end{document}